\newcommand{\Z}{{\textsf{\textup{Z}}}}
\newtheorem{thm}{Theorem}
\newtheorem{cor}[thm]{Corollary}
\newtheorem{defi}[thm]{Definition}
\newtheorem{rem}[thm]{Remark}
\newtheorem{nota}[thm]{Notation}
\newtheorem{exa}[thm]{Example}
\newtheorem{princ}[thm]{Principle}
\newtheorem{ack}[thm]{Acknowledgement}
\newtheorem*{tempo*}{Template}
\newtheorem{theorem}[thm]{Theorem}
\newtheorem{lemma}[thm]{Lemma}
\newtheorem{corollary}[thm]{Corollary}
\newcommand\be{\begin{equation}}
\newcommand\ee{\end{equation}} 
\def\bdefi{\begin{defi}\rm}
\def\edefi{\end{defi}}
\def\bnota{\begin{nota}\rm}
\def\enota{\end{nota}}
\def\FIVE{\Pi_{1}^{1}\text{-\textup{\textsf{CA}}}_{0}}
\def\SIXK{\Pi_{k}^{1}\text{-\textsf{\textup{CA}}}_{0}^{\omega}}
\def\ATR{\textup{\textsf{ATR}}}
\def\ZFC{\textup{\textsf{ZFC}}}
\def\ZF{\textup{\textsf{ZF}}}
\def\NCC{\textup{\textsf{NCC}}}
\def\L{\textsf{\textup{L}}}
 \def\r{\mathbb{r}}
\def\RCA{\textup{\textsf{RCA}}}
\def\({\textup{(}}
\def\){\textup{)}}
\def\c{\textup{\textsf{c}}}
\def\RCAo{\textup{\textsf{RCA}}_{0}^{\omega}}
\def\ACAo{\textup{\textsf{ACA}}_{0}^{\omega}}
\def\WKL{\textup{\textsf{WKL}}}
\def\bye{\end{document}}
\def\N{{\mathbb  N}}
\def\Q{{\mathbb  Q}}
\def\R{{\mathbb  R}}
\def\SS{\textup{\textsf{S}}}
\def\di{\rightarrow}
\def\asa{\leftrightarrow}
\def\ACA{\textup{\textsf{ACA}}}
\def\QFAC{\textup{\textsf{QF-AC}}}
\def\AC{\textup{\textsf{AC}}}
\def\NIN{\textup{\textsf{NIN}}}
\def\NBI{\textup{\textsf{NBI}}}
\def\CH{\textup{\textsf{CH}}}
\def\MCC{\textup{\textsf{MCC}}}
\def\cont{\textup{\textsf{coco}}}
\def\LOC{\textup{\textsf{LOC}}}
\def\URY{\textup{\textsf{URY}}}
\def\TIET{\textup{\textsf{TIE}}}
\def\HBC{\textup{\textsf{HBC}}}
\def\open{\textup{\textsf{open}}}
\def\CBT{\textup{\textsf{CBT}}}
\def\u{\textup{\textsf{u}}}
\def\BOOT{\textup{\textsf{BOOT}}}
\def\INDD{\textup{\textbf{IND}}}
\def\NFP{\textup{\textsf{NFP}}}
\def\HBU{\textup{\textsf{HBU}}}
\def\PIT{\textup{\textsf{PIT}}}
\def\w{\textup{\textsf{w}}}
\def\p{\textup{\textsf{p}}}
\def\RM{\textup{\textsf{rm}}}
\def\DCA{\Delta\textup{\textsf{-CA}}}
\def\eps{\varepsilon}
\def\ECF{\textup{\textsf{ECF}}}
\def\SCF{\textup{\textsf{SFF}}}
\numberwithin{equation}{section}
\numberwithin{thm}{section}
\begin{document}
\title[Axiom of Choice in Computability Theory and Reverse Mathematics]{The Axiom of Choice in Computability Theory and Reverse Mathematics \\ {\tiny with a cameo for the Continuum Hypothesis}}
\author{Dag Normann}
\address{Department of Mathematics, The University 
of Oslo, P.O. Box 1053, Blindern N-0316 Oslo, Norway}
\email{dnormann@math.uio.no}
\author{Sam Sanders}
\address{Department of Mathematics, TU Darmstadt, Darmstadt, Germany}
\email{sasander@me.com}
\subjclass[2010]{03B30, 03D65, 03F35}
\keywords{Axiom of Choice, higher-order computability theory, Reverse Mathematics, higher-order arithmetic, Continuum Hypothesis}
\begin{abstract}
The \emph{Axiom of Choice} ($\AC$ for short) is the most (in)famous axiom of the usual foundations of mathematics, $\ZFC$ set theory.  
The (non-)essential use of $\AC$ in mathematics has been well-studied and thoroughly classified. 
Now, fragments of \emph{countable $\AC$} {not} provable in $\ZF$ have recently been used in Kohlenbach's higher-order \emph{Reverse Mathematics} to obtain equivalences between closely related compactness and local-global principles.  
We continue this study and show that $\NCC$, a weak choice principle provable in $\ZF$ and much weaker systems, suffices for many of these results.  
In light of the intimate connection between Reverse Mathematics and computability theory, we also study \emph{realisers} for $\NCC$, i.e.\ functionals that produce 
the choice functions claimed to exist by the latter from the other data.  
Our \emph{hubris} of undertaking the hitherto underdeveloped study of the computational properties of (choice functions from) $\AC$ leads to interesting results.  
For instance, using Kleene's S1-S9 computation schemes, we show that various \emph{total} realisers for $\NCC$ compute Kleene's $\exists^{3}$, a functional that gives rise to full second-order arithmetic, and vice versa.  By contrast, \emph{partial} realisers for $\NCC$ \emph{should}  be much weaker, but establishing this conjecture remains elusive. 
By way of \emph{catharsis}, we show that the Continuum Hypothesis ($\CH$ for short) is equivalent to the existence of a \emph{countably based} partial realiser for $\NCC$.  
The latter kind of realiser does \emph{not} compute Kleene's $\exists^{3}$ and is therefore strictly weaker than a total one.  
\end{abstract}


\maketitle
\thispagestyle{empty}

\section{Introduction}
Obviousness, much more than beauty, is in the eye of the beholder.  For this reason, lest we be misunderstood, we formulate a blanket caveat: all notions (computation, continuity, function, open set, comprehension, et cetera) used in this paper are to be interpreted via their well-known definitions in higher-order arithmetic listed below, \emph{unless explicitly stated otherwise}.  
\subsection{Short summary}
The usual foundations of mathematics \emph{Zermelo-Fraenkel set theory with the Axiom of Choice} and its acronym $\ZFC$, explicitly reference a single axiom.  
The (in)essential use of the Axiom of Choice ($\AC$ for short) in mathematics, is well-studied and has been classified in detail (\cites{heerlijk, howrude,rudebin, rudebin2}).
In a nutshell, this paper deals with the (in)essential use of $\AC$ in Kohlenbach's higher-order \emph{Reverse Mathematics} (RM for short; see Section \ref{prelim1}), and the 
study of the computational properties of the associated fragments of $\AC$ following Kleene's S1-S9 computation schemes (see Section \ref{HCT}).
Our \emph{hubris} of undertaking the hitherto underdeveloped study of the computational properties of choice functions from $\AC$ leads to \emph{catharsis} in that the latter 
properties turn out to be intimately connected to Cantor's \emph{Continuum Hypothesis}, even in the most basic case.  

\smallskip

In more detail, fragments of \emph{countable} $\AC$ \emph{not provable in} $\ZF$, play a central role in the RM of local-global principles and compactness principles in \cite{dagsamV, dagsamVII}.  
The latter principles are generally believed to be intimately related (see e.g.\ Tao's description in \cite{taokejes}*{p.~168}), but they can have \emph{very} 
different logical and computational properties, especially in the \emph{absence} of countable $\AC$, as shown in \cite{dagsamV, dagsamVII} and discussed in detail below in Section \ref{pisec}.

\smallskip

In this paper, we show that countable $\AC$ can be replaced by the much weaker principle $\NCC$ (see Section~\ref{pisec}) provable in higher-order arithmetic \emph{without choice}, and hence $\ZF$.  
Following the intimate connection between RM and computability theory, we also study the computational properties of $\NCC$.
A central role is played by the distinction between \emph{total} and \emph{partial} realisers of $\NCC$. 
Intuitively, the former are \emph{strong} as they compute Kleene's $\exists^{3}$ (yielding full second-order arithmetic; see Section \ref{HCT}), while there \emph{should} be weak examples of the latter that in particular do not compute $\exists^{3}$.
Establishing the latter fact, we run into the famous \emph{Continuum Hypothesis} ($\CH$ for short).  
We explain the required background from \cites{dagsamV, dagsamVII} in Sections \ref{qstn} and \ref{pisec}, while the latter also sketches our main results. 

\smallskip

Finally, $\ZF$ can prove certain choice principles and we refer to those as \emph{weak} fragments of $\AC$, whereas \emph{strong} fragments are those not provable in $\ZF$.
\subsection{Overview} 
We discuss the starting point of this paper, namely \emph{Reverse Mathematics}, in Section \ref{qstn}, while our main results are summarised in Section \ref{pisec}.
\subsubsection{A question with multiple answers}\label{qstn}
The starting point of our enterprise is the \emph{Main Question} of the \emph{Reverse Mathematics} program (RM hereafter; see Section~\ref{prelim1} for an introduction), which is usually formulated as follows.
\begin{quote}
What are the minimal axioms needed to prove a given theorem of ordinary, i.e.\ non-set theoretic mathematics?  (see \cite{simpson2}*{I.1})
\end{quote}
Implicit in this question is the assumption that one can always find a \emph{unique and unambiguous} set of such minimal axioms.  
As it turns out, there are basic theorems for which this question \textbf{does not} have an unique or unambiguous answer.  
The most basic example is \emph{Pincherle's theorem}, published around 1882 in \cite{tepelpinch}*{p.\ 67} and studied in \cite{dagsamV}.
This \emph{third-order} theorem expresses that a locally bounded function is bounded, say on Cantor space for simplicity. 

\smallskip

As discussed in detail in Section \ref{pisec}, and with definitions in Section~\ref{prelim1}, Pincherle's theorem is equivalent to \emph{weak K\"onig's lemma} from second-order RM, over Kohlenbach's base theory $\RCAo$ \textbf{plus} $\QFAC^{0,1}$; the latter is a strong fragment of countable choice.  This equivalence is expected as compactness principles and local-global principles are intimately related in light of Tao's description in \cite{taokejes}*{p.~168}.   
By contrast, in the \emph{absence} of countable choice, there are two conservative extensions of second-order arithmetic $\Z_{2}$, called $\Z_{2}^{\omega}$ and $\Z_{2}^{\Omega}$, where the former \emph{cannot} prove Pincherle's theorem and the latter \emph{can} (and hence $\ZF$ can too).  We note that $\Z_{2}^{\omega}$ is based on \emph{third-order} functionals $\SS_{k}^{2}$ deciding second-order $\Pi_{k}^{1}$-formulas, while $\Z_{2}^{\Omega}$ is based on Kleene's \emph{fourth-order} quantifier $\exists^{3}$.

\smallskip

Similar results are available for the computability theory (in the sense of Kleene's S1-S9 from \cite{kleene2, longmann}): weak K\"onig's lemma is equivalent to the Heine-Borel theorem for countable covers and the
finite sub-cover claimed to exist by the latter is outright computable in terms of the data.  Despite this equivalence and the similar syntactic form, no type two functional (which includes the aforementioned $\SS_{k}^{2}$) can compute 
the upper bound from Pincherle's theorem in terms of the data.

\smallskip

More results of the above nature can be found in \cite{dagsamV, dagsamVII}, as discussed in Section~\ref{pisec}.
Together, these results show that local-global principles (like Pincherle's theorem) are very similar to compactness (like weak K\"onig's lemma), \emph{yet} can behave very differently, esp.\ in the absence of countable choice. 
In the spirit of RM, it is then a natural question whether countable choice is necessary in this context, or whether a weak(er) choice principle, say provable in $\ZF$, suffices.  
A positive answer is provided in the next section, as well 
as the implications for (higher-order) computability theory.  Indeed, the latter is intimately connected to RM, prompting the study of realisers for the aforementioned weak choice principles.  

\smallskip

Finally, we note that, in the grander scheme of things, there is (was?) a movement to remove countable choice from Bishop's constructive analysis \cite{BISHNOCC, BISHNOCC2, shoeshoe, BRS} and constructive mathematics (\cite{troeleke2}*{\S3.9}).  
While classical, our results do fit with the spirit of this constructive enterprise. 
\subsubsection{The Pincherle phenomenon}\label{pisec}
We formulate the results from \cite{dagsamV, dagsamVII}, the \emph{Pincherle phenomenon} in particular, and sketch our results based on this phenomenon.

\smallskip

First of all, we have shown in \cite{dagsamV} that Pincherle's theorem is \emph{closely related} to (open-cover) compactness, but has \emph{fundamentally different} logical and computational properties.  
Indeed, Pincherle's theorem, called $\PIT_{o}$ in \cite{dagsamV}, satisfies the following properties; definitions can be found in Section \ref{HCT} and \ref{hijo}.    
\begin{itemize}
\item[(I)] The systems $\Z_{2}^{\omega}$ and $\Z_{2}^{\Omega}$ are conservative extensions of $\Z_{2}$ and $\Z_{2}^{\omega}$ cannot prove $\PIT_{o}$ while $\Z_{2}^{\Omega}$ can; $\RCAo+\QFAC^{0,1}$ proves $\WKL\asa \PIT_{o}$.
\item[(II)] Even a weak\footnote{Two kinds of realisers for Pincherle's theorem were introduced in \cite{dagsamV}: a \emph{weak Pincherle realiser} $M_{o}$ takes as input $F^{2}$ that is locally bounded on $2^{\N}$ together with $G^{2}$ such that $G(f)$ is an upper bound for $F$ in $[\overline{f}G(f)]$ for any $f\in 2^{\N}$, and outputs an upper bound $M_{o}(F, G)$ for $F$ on $2^{\N}$. A (normal) \emph{Pincherle realiser} $M_{\u}$ outputs an upper bound $M_{\u}(G)$ without access to $F$.  We discuss these functionals in some detail in Section \ref{labbe}.\label{fookie}} realiser for $\PIT_{o}$ cannot be computed (Kleene S1-S9) in terms of any type two functional, including the comprehension functionals $\SS_{k}^{2}$. 
\end{itemize}
Secondly, we have established similar properties in \cite{dagsamVII} for many basic theorems pertaining to open sets given by (possibly discontinuous) characteristic functions.  
A number of results in \cite{samph} also make use of $\QFAC^{0,1}$ in (what seems like) an essential way.  
For instance, let $\HBC$ be the Heine-Borel theorem for countable covers of closed sets in $[0,1]$ which are complements of the aforementioned kind of open sets.  
Exactly the same properties as in items (I) and (II) hold for $\HBC$, and a large number of similar theorems, by \cite{dagsamVII}*{\S3}.

\smallskip

We shall therefore say that \emph{$\HBC$ exhibits the Pincherle phenomemon}, due to Pincherle's theorem $\PIT_{o}$ being the first theorem identified as exhibiting the behaviour as in (I) and (II), namely in \cite{dagsamV}. In other words, the aim of \cite{dagsamVII} was to establish the abundance of the Pincherele phenomenon in ordinary mathematics, beyond the few examples from \cite{dagsamV}.

\smallskip

Thirdly, since $\ZF$ cannot prove $\QFAC^{0,1}$, it is a natural question, also implied by the Main Question of RM, whether a choice principle weaker than $\QFAC^{0,1}$ also suffices to obtain equivalences like $\HBC\asa \WKL\asa \PIT_{o}$.
In this paper, we show that a number of such results originally proved using $\QFAC^{0,1}$, can be proved using the following weak choice principle.  
\bdefi[$\NCC$]
For $Y^{2}$ and $A(n, m)\equiv (\exists f\in 2^{\N})(Y(f, m, n)=0)$:
\[
(\forall n^{0})(\exists m^{0})A(n,m)\di  (\exists g^{1})(\forall n^{0})A(n,g(n)). 
\]
\edefi
Clearly, this principle is provable in $\ZF$ and even in $\Z_{2}^{\Omega}$, a conservative extension of $\Z_{2}$ introduced in Section \ref{HCT}.
The replacement of $\QFAC^{0,1}$ by $\NCC$ is for the most part non-trivial and introduces a lot more technical detail, as will become clear in Section \ref{FRAC}.
An obvious RM-question is whether one can weaken $\NCC$, e.g.\ by letting $g^{1}$ only provide an \emph{upper bound} for the variable $m^{0}$ in $\NCC$. 
The below proofs do not seem to go through with this modification.  As discussed in Section~\ref{basik}, $\NCC$ is also connected to the \emph{uncountability of $\R$} in interesting ways. 

\smallskip

Finally, since RM and computability theory are generally intimately connected (both in the second- and higher-order case), it is a natural next step to study the computational properties of $\NCC$, 
even though choice functions provided by $\AC$ are often regarded as fundamentally non-constructive. 
We study \emph{realisers} for $\NCC$, which are functionals $\zeta$ that take as input $Y$ and output the choice function $\zeta(Y) =g$ from $\NCC$.
While $\NCC$ is quite weak, the associated realisers turn out to be rather \emph{strong}, in that they compute the aforementioned $\exists^{3}$, a functional that yields full second-order arithmetic (and vice versa).  
We establish the same for \emph{weak} realisers for $\NCC$ that only yield an \emph{upper bound} for the variable $m^{0}$ from $\NCC$.  

\smallskip

Finally, the strength of the aforementioned realisers is due to their \emph{total} nature, and it is therefore natural to study \emph{partial}, i.e.\ not everywhere defined, realisers for $\NCC$.  
In particular, we believe these realisers to be the key to answering the following question raised in \cites{dagsamV}.  
Intuitively speaking, there should be a difference between 
the following two computational problems (A) and (B).
\begin{itemize}
\item[(A)] For any $G:2^{\N}\di \N$, compute a finite sub-cover of $\cup_{f\in 2^{\N}}[\overline{f}G(f)]$, i.e.\ compute $f_{1}, \dots, f_{k}\in 2^{\N}$ such that $\cup_{i\leq k}[\overline{f_{i}}G(f_{i})]$ covers $2^{\N}$.
\item[(B)] For any $G:2^{\N}\di \N$, compute a number $k\in \N$ such that \textbf{there exists} a finite sub-cover $f_{1}, \dots, f_{k}\in 2^{\N}$ of $\cup_{f\in 2^{\N}}[\overline{f}G(f)]$.
\end{itemize}
The problem (A) gives rise to $\Theta$-functionals, introduced in Section \ref{HCT}, while (B) gives rise to realisers for `uniform' Pincherle's theorem, introduced in Section \ref{labbe}.
Note that in item (A), one needs to provide elements in Cantor space (which can code infinitely much information), while item (B) only requires a natural number (which can only code finite information).  
In Section \ref{incel}, we show that partial realisers for $\NCC$ can perform (B); we \emph{conjecture} that they cannot perform (A). 

\smallskip

Since we do not have any idea how to establish the aforementioned conjecture, we shall solve a weaker\footnote{It is shown in \cite{dagsam} that $\exists^{3}$ can perform the computational task (A).} problem, namely finding a partial realiser of $\NCC$ that does not compute Kleene's $\exists^{3}$.   
In this context, the property \emph{countably based}, a kind of higher-order continuity property as in Definition \ref{frag}, is helpful.  Indeed, countably based functionals cannot compute $\exists^{3}$, i.e.\ 
a countably based realiser for $\NCC$ is just what we want.  
Much to our surprise, this kind of construct does exist, but is rather elusive as the following is proved in Theorem \ref{thm.CH}.  
\begin{center}
The Continuum Hypothesis $\CH$ is equivalent to the existence of a \emph{countably based} partial realiser for $\NCC$.  
\end{center}  
This result perhaps constitutes a kind of \emph{catharsis} following the \emph{hubris} of 
studying the computational properties of choice functions from $\AC$.   
Entertaining as this equivalence may be, it would be preferable to have a $\ZFC$-proof of the existence of a partial realiser for $\NCC$ that does not compute $\exists^{3}$.

\section{Preliminaries}\label{prelim}

We introduce \emph{Reverse Mathematics} in Section \ref{prelim1}, as well as its generalisation to \emph{higher-order arithmetic}, and the associated base theory $\RCAo$.  
We introduce some essential axioms in Section~\ref{HCT}.  

\subsection{Reverse Mathematics}\label{prelim1}
Reverse Mathematics is a program in the foundations of mathematics initiated around 1975 by Friedman (\cites{fried,fried2}) and developed extensively by Simpson (\cite{simpson2}).  
The aim of RM is to identify the minimal axioms needed to prove theorems of ordinary, i.e.\ non-set theoretical, mathematics. 

\smallskip

We refer to \cite{stillebron} for a basic introduction to RM and to \cite{simpson2, simpson1} for an overview of RM.  We expect basic familiarity with RM, but do sketch some aspects of Kohlenbach's \emph{higher-order} RM (\cite{kohlenbach2}) essential to this paper, including the base theory $\RCAo$ (Definition \ref{kase}).  
As will become clear, the latter is officially a type theory but can accommodate (enough) set theory. 

\smallskip

First of all, in contrast to `classical' RM based on \emph{second-order arithmetic} $\Z_{2}$, higher-order RM uses $\L_{\omega}$, the richer language of \emph{higher-order arithmetic}.  
Indeed, while the former is restricted to natural numbers and sets of natural numbers, higher-order arithmetic can accommodate sets of sets of natural numbers, sets of sets of sets of natural numbers, et cetera.  
To formalise this idea, we introduce the collection of \emph{all finite types} $\mathbf{T}$, defined by the two clauses:
\begin{center}
(i) $0\in \mathbf{T}$   and   (ii)  If $\sigma, \tau\in \mathbf{T}$ then $( \sigma \di \tau) \in \mathbf{T}$,
\end{center}
where $0$ is the type of natural numbers, and $\sigma\di \tau$ is the type of mappings from objects of type $\sigma$ to objects of type $\tau$.
In this way, $1\equiv 0\di 0$ is the type of functions from numbers to numbers, and  $n+1\equiv n\di 0$.  Viewing sets as given by characteristic functions, we note that $\Z_{2}$ only includes objects of type $0$ and $1$.    

\smallskip

Secondly, the language $\L_{\omega}$ includes variables $x^{\rho}, y^{\rho}, z^{\rho},\dots$ of any finite type $\rho\in \mathbf{T}$.  Types may be omitted when they can be inferred from context.  
The constants of $\L_{\omega}$ include the type $0$ objects $0, 1$ and $ <_{0}, +_{0}, \times_{0},=_{0}$  which are intended to have their usual meaning as operations on $\N$.
Equality at higher types is defined in terms of `$=_{0}$' as follows: for any objects $x^{\tau}, y^{\tau}$, we have
\be\label{aparth}
[x=_{\tau}y] \equiv (\forall z_{1}^{\tau_{1}}\dots z_{k}^{\tau_{k}})[xz_{1}\dots z_{k}=_{0}yz_{1}\dots z_{k}],
\ee
if the type $\tau$ is composed as $\tau\equiv(\tau_{1}\di \dots\di \tau_{k}\di 0)$.  
Furthermore, $\L_{\omega}$ also includes the \emph{recursor constant} $\mathbf{R}_{\sigma}$ for any $\sigma\in \mathbf{T}$, which allows for iteration on type $\sigma$-objects as in the special case \eqref{special}.  Formulas and terms are defined as usual.  
One obtains the sub-language $\L_{n+2}$ by restricting the above type formation rule to produce only type $n+1$ objects (and related types of similar complexity).        
\bdefi\label{kase} 
The base theory $\RCAo$ consists of the following axioms.
\begin{enumerate}
 \renewcommand{\theenumi}{\alph{enumi}}
\item  Basic axioms expressing that $0, 1, <_{0}, +_{0}, \times_{0}$ form an ordered semi-ring with equality $=_{0}$.
\item Basic axioms defining the well-known $\Pi$ and $\Sigma$ combinators (aka $K$ and $S$ in \cite{avi2}), which allow for the definition of \emph{$\lambda$-abstraction}. 
\item The defining axiom of the recursor constant $\mathbf{R}_{0}$: for $m^{0}$ and $f^{1}$: 
\be\label{special}
\mathbf{R}_{0}(f, m, 0):= m \textup{ and } \mathbf{R}_{0}(f, m, n+1):= f(n, \mathbf{R}_{0}(f, m, n)).
\ee
\item The \emph{axiom of extensionality}: for all $\rho, \tau\in \mathbf{T}$, we have:
\be\label{EXT}\tag{$\textsf{\textup{E}}_{\rho, \tau}$}  
(\forall  x^{\rho},y^{\rho}, \varphi^{\rho\di \tau}) \big[x=_{\rho} y \di \varphi(x)=_{\tau}\varphi(y)   \big].
\ee 
\item The induction axiom for quantifier-free\footnote{To be absolutely clear, variables (of any finite type) are allowed in quantifier-free formulas of the language $\L_{\omega}$: only quantifiers are banned.} formulas of $\L_{\omega}$.
\item $\QFAC^{1,0}$: the quantifier-free Axiom of Choice as in Definition \ref{QFAC}.
\end{enumerate}
\edefi
\noindent
We let $\INDD^{\omega}$ be the induction axiom for all formulas in $\L_{\omega}$.
\bdefi\label{QFAC} The axiom $\QFAC$ consists of the following for all $\sigma, \tau \in \textbf{T}$:
\be\tag{$\QFAC^{\sigma,\tau}$}
(\forall x^{\sigma})(\exists y^{\tau})A(x, y)\di (\exists Y^{\sigma\di \tau})(\forall x^{\sigma})A(x, Y(x)),
\ee
for any quantifier-free formula $A$ in the language of $\L_{\omega}$.
\edefi
As discussed in \cite{kohlenbach2}*{\S2}, $\RCAo$ and $\RCA_{0}$ prove the same sentences `up to language' as the latter is set-based and the former function-based.  Recursion as in \eqref{special} is called \emph{primitive recursion}; the class of functionals obtained from $\mathbf{R}_{\rho}$ for all $\rho \in \mathbf{T}$ is called \emph{G\"odel's system $T$} of all (higher-order) primitive recursive functionals.  

\smallskip

We use the usual notations for natural, rational, and real numbers, and the associated functions, as introduced in \cite{kohlenbach2}*{p.\ 288-289}.  
\begin{defi}[Real numbers and related notions in $\RCAo$]\label{keepintireal}\rm~
\begin{enumerate}
 \renewcommand{\theenumi}{\alph{enumi}}
\item Natural numbers correspond to type zero objects, and we use `$n^{0}$' and `$n\in \N$' interchangeably.  Rational numbers are defined as signed quotients of natural numbers, and `$q\in \Q$' and `$<_{\Q}$' have their usual meaning.    
\item Real numbers are coded by fast-converging Cauchy sequences $q_{(\cdot)}:\N\di \Q$, i.e.\  such that $(\forall n^{0}, i^{0})(|q_{n}-q_{n+i}|<_{\Q} \frac{1}{2^{n}})$.  
We use Kohlenbach's `hat function' from \cite{kohlenbach2}*{p.\ 289} to guarantee that every $q^{1}$ defines a real number.  
\item We write `$x\in \R$' to express that $x^{1}:=(q^{1}_{(\cdot)})$ represents a real as in the previous item and write $[x](k):=q_{k}$ for the $k$-th approximation of $x$.    
\item Two reals $x, y$ represented by $q_{(\cdot)}$ and $r_{(\cdot)}$ are \emph{equal}, denoted $x=_{\R}y$, if $(\forall n^{0})(|q_{n}-r_{n}|\leq {2^{-n+1}})$. Inequality `$<_{\R}$' is defined similarly.  
We sometimes omit the subscript `$\R$' if it is clear from context.           
\item Functions $F:\R\di \R$ are represented by $\Phi^{1\di 1}$ mapping equal reals to equal reals, i.e.\ extensionality as in $(\forall x , y\in \R)(x=_{\R}y\di \Phi(x)=_{\R}\Phi(y))$.\label{EXTEN}
\item The relation `$x\leq_{\tau}y$' is defined as in \eqref{aparth} but with `$\leq_{0}$' instead of `$=_{0}$'.  Binary sequences are denoted `$f^{1}, g^{1}\leq_{1}1$', but also `$f,g\in C$' or `$f, g\in 2^{\N}$'.  Elements of Baire space are given by $f^{1}, g^{1}$, but also denoted `$f, g\in \N^{\N}$'.
\item For a binary sequence $f^{1}$, the associated real in $[0,1]$ is $\r(f):=\sum_{n=0}^{\infty}\frac{f(n)}{2^{n+1}}$.\label{detrippe}
\item An object $\textbf{Y}^{0\di \rho}$ is called \emph{a sequence of type $\rho$ objects} and also denoted $\textbf{Y}=(Y_{n})_{n\in \N}$ or $\textbf{Y}=\lambda n. Y_{n}$ where $Y_{n}:=\textbf{Y}(n)$ for all $n^{0}$.
\end{enumerate}
\end{defi}
\noindent
Below, we shall discuss various \emph{different} notions of open set, namely as in Definitions~\ref{openset} and~\ref{opensset}.
Hence, we do not provide a general definition of set here.  
Next, we mention the highly useful $\ECF$-interpretation. 
\begin{rem}[The $\ECF$-interpretation]\label{ECF}\rm
The (rather) technical definition of $\ECF$ may be found in \cite{troelstra1}*{p.\ 138, \S2.6}.
Intuitively, the $\ECF$-interpretation $[A]_{\ECF}$ of a formula $A\in \L_{\omega}$ is just $A$ with all variables 
of type two and higher replaced by type one variables ranging over so-called `associates' or `RM-codes' (see \cite{kohlenbach4}*{\S4}); the latter are (countable) representations of continuous functionals.  
The $\ECF$-interpretation connects $\RCAo$ and $\RCA_{0}$ (see \cite{kohlenbach2}*{Prop.\ 3.1}) in that if $\RCAo$ proves $A$, then $\RCA_{0}$ proves $[A]_{\ECF}$, again `up to language', as $\RCA_{0}$ is 
formulated using sets, and $[A]_{\ECF}$ is formulated using types, i.e.\ using type zero and one objects.  
\end{rem}
In light of the widespread use of codes in RM and the common practise of identifying codes with the objects being coded, it is no exaggeration to refer to $\ECF$ as the \emph{canonical} embedding of higher-order into second-order arithmetic. 
For completeness, we list the following notational convention for finite sequences.  
\begin{nota}[Finite sequences]\label{skim}\rm
We assume a dedicated type for `finite sequences of objects of type $\rho$', namely $\rho^{*}$.  Since the usual coding of pairs of numbers goes through in $\RCAo$, we shall not always distinguish between $0$ and $0^{*}$. 
Similarly, we do not always distinguish between `$s^{\rho}$' and `$\langle s^{\rho}\rangle$', where the former is `the object $s$ of type $\rho$', and the latter is `the sequence of type $\rho^{*}$ with only element $s^{\rho}$'.  The empty sequence for the type $\rho^{*}$ is denoted by `$\langle \rangle_{\rho}$', usually with the typing omitted.  

\smallskip

Furthermore, we denote by `$|s|=n$' the length of the finite sequence $s^{\rho^{*}}=\langle s_{0}^{\rho},s_{1}^{\rho},\dots,s_{n-1}^{\rho}\rangle$, where $|\langle\rangle|=0$, i.e.\ the empty sequence has length zero.
For sequences $s^{\rho^{*}}, t^{\rho^{*}}$, we denote by `$s*t$' the concatenation of $s$ and $t$, i.e.\ $(s*t)(i)=s(i)$ for $i<|s|$ and $(s*t)(j)=t(|s|-j)$ for $|s|\leq j< |s|+|t|$. For a sequence $s^{\rho^{*}}$, we define $\overline{s}N:=\langle s(0), s(1), \dots,  s(N-1)\rangle $ for $N^{0}<|s|$.  
For a sequence $\alpha^{0\di \rho}$, we also write $\overline{\alpha}N=\langle \alpha(0), \alpha(1),\dots, \alpha(N-1)\rangle$ for \emph{any} $N^{0}$.  By way of shorthand, 
$(\forall q^{\rho}\in Q^{\rho^{*}})A(q)$ abbreviates $(\forall i^{0}<|Q|)A(Q(i))$, which is (equivalent to) quantifier-free if $A$ is.   
\end{nota}

\subsection{Higher-order computability theory}\label{HCT}
As noted above, some of our main results are part of computability theory.
Thus, we first make our notion of `computability' precise as follows.  
\begin{itemize}
\item We adopt $\ZFC$, i.e.\ Zermelo-Fraenkel set theory with the Axiom of Choice, as the official metatheory for all results, unless explicitly stated otherwise.
\item We adopt Kleene's notion of \emph{higher-order computation} as given by his nine clauses S1-S9 (see \cite{longmann}*{Ch.\ 5} or \cite{kleene2}) as our official notion of `computable'. 
\end{itemize}
We discuss our choice of framework, and a possible alternative, in Section~\ref{optional}. 

\smallskip

Secondly, similar to \cites{dagsam,dagsamII, dagsamIII, dagsamV, dagsamVI}, one main aim of this paper is the study of functionals of type 3 that are \emph{natural} from the perspective of mathematical practise. 
Our functionals are \emph{genuinely} of type 3 in the sense that they are not computable from any functional of type 2.  The following definition is standard in this context. 
\bdefi\label{frag}\rm
A functional $\Phi^{3}$ is \emph{countably based} if for every $F^{2}$ there is countable $X\subset \N^{\N}$ such that $\Phi(F) = \Phi(G)$ for every $G$ that agrees with $F$ on $X$.
\edefi
Stanley Wainer (unpublished) has defined the countably based functionals of finite type as an analogue of the continuous functionals, while 
John Hartley has investigated the computability theory of this type structure in \cite{hartleycountable}. 

\smallskip

We only use countably based functionals of type at most $3$ in this paper.
Now, if $\Phi^{3}$ is computable in a functional of type 2, then it is countably based, but the converse does not hold. However, Hartley proves in \cite{hartleycountable} that, assuming $\ZFC+\CH$ however, if $\Phi^3$ is not countably based, then there is some $F^{2}$ such that $\exists^3$ (see below) is computable in $\Phi$ and $F$.  In other words, stating the existence of a non-countably based $\Phi$ brings us `close to' $\Z_2^\Omega$ (defined below). 
In the sequel, we shall explicitly point out where we use countably based functionals.  

\smallskip

The importance of Definition \ref{frag} can be understood as follows: to answer whether a given functional $\Phi^{3}$ can compute another functional $\Psi^{3}$, the answer is automatically `no' if $\Phi$ is countably based and $\Psi$ is not. 
A similar `rule-of-thumb' is that if $\Phi$ does not compute $\exists^{2}$ (or a discontinuous functional on $\R$ or $\N^{\N}$; see below), while $\Psi$ does, the answer is similarly `no'.
We have used both rules-of-thumb throughout our project to provide a first `rough' classification of new functionals. 

\smallskip

For the rest of this section, we introduce some existing functionals which will be used below.
In particular, we introduce some functionals which constitute the counterparts of second-order arithmetic $\Z_{2}$, and some of the Big Five systems, in higher-order RM.
We use the formulation from \cite{kohlenbach2, dagsamIII}.  

\smallskip
\noindent
First of all, $\ACA_{0}$ is readily derived from:
\begin{align}\label{mu}\tag{$\mu^{2}$}
(\exists \mu^{2})(\forall f^{1})\big[ (\exists n)(f(n)=0) \di [(f(\mu(f))=0)&\wedge (\forall i<\mu(f))f(i)\ne 0 ]\\
& \wedge [ (\forall n)(f(n)\ne0)\di   \mu(f)=0]    \big], \notag
\end{align}
and $\ACA_{0}^{\omega}\equiv\RCAo+(\mu^{2})$ proves the same sentences as $\ACA_{0}$ by \cite{hunterphd}*{Theorem~2.5}.   The (unique) functional $\mu^{2}$ in $(\mu^{2})$ is also called \emph{Feferman's $\mu$} (\cite{avi2}), 
and is clearly \emph{discontinuous} at $f=_{1}11\dots$; in fact, $(\mu^{2})$ is equivalent to the existence of $F:\R\di\R$ such that $F(x)=1$ if $x>_{\R}0$, and $0$ otherwise (\cite{kohlenbach2}*{\S3}), and to 
\be\label{muk}\tag{$\exists^{2}$}
(\exists \varphi^{2}\leq_{2}1)(\forall f^{1})\big[(\exists n)(f(n)=0) \asa \varphi(f)=0    \big]. 
\ee
\noindent
Secondly, $\FIVE$ is readily derived from the following sentence:
\be\tag{$\SS^{2}$}
(\exists\SS^{2}\leq_{2}1)(\forall f^{1})\big[  (\exists g^{1})(\forall n^{0})(f(\overline{g}n)=0)\asa \SS(f)=0  \big], 
\ee
and $\FIVE^{\omega}\equiv \RCAo+(\SS^{2})$ proves the same $\Pi_{3}^{1}$-sentences as $\FIVE$ by \cite{yamayamaharehare}*{Theorem 2.2}.   The (unique) functional $\SS^{2}$ in $(\SS^{2})$ is also called \emph{the Suslin functional} (\cite{kohlenbach2}).
By definition, the Suslin functional $\SS^{2}$ can decide whether a $\Sigma_{1}^{1}$-formula as in the left-hand side of $(\SS^{2})$ is true or false.   We similarly define the functional $\SS_{k}^{2}$ which decides the truth or falsity of $\Sigma_{k}^{1}$-formulas; we also define 
the system $\SIXK$ as $\RCAo+(\SS_{k}^{2})$, where  $(\SS_{k}^{2})$ expresses that $\SS_{k}^{2}$ exists.  Note that we allow formulas with \emph{function} parameters, but \textbf{not} \emph{functionals} here.
In fact, Gandy's \emph{Superjump} (\cite{supergandy}) constitutes a way of extending $\FIVE^{\omega}$ to parameters of type two.  We identify the functionals $\exists^{2}$ and $\SS_{0}^{2}$ and the systems $\ACAo$ and $\SIXK$ for $k=0$.
We note that the operators $\nu_{n}$ from \cite{boekskeopendoen}*{p.\ 129} are essentially $\SS_{n}^{2}$ strengthened to return a witness (if existant) to the $\Sigma_{k}^{1}$-formula at hand.  

\smallskip

\noindent
Thirdly, full second-order arithmetic $\Z_{2}$ is readily derived from $\cup_{k}\SIXK$, or from:
\be\tag{$\exists^{3}$}
(\exists E^{3}\leq_{3}1)(\forall Y^{2})\big[  (\exists f^{1})(Y(f)=0)\asa E(Y)=0  \big], 
\ee
and we therefore define $\Z_{2}^{\Omega}\equiv \RCAo+(\exists^{3})$ and $\Z_{2}^\omega\equiv \cup_{k}\SIXK$, which are conservative over $\Z_{2}$ by \cite{hunterphd}*{Cor.\ 2.6}. 
Despite this close connection, $\Z_{2}^{\omega}$ and $\Z_{2}^{\Omega}$ can behave quite differently, as discussed in e.g.\ \cite{dagsamIII}*{\S2.2}.   The functional from $(\exists^{3})$ is also called `$\exists^{3}$', and we use the same convention for other functionals.  
Note that $(\exists^{3})\asa [(\exists^{2})+(\kappa_{0}^{3})]$ as shown in \cite{samsplit,dagsam}, where the latter is comprehension on $2^{\N}$: 
\be\tag{$\kappa_{0}^{3}$}
(\exists \kappa_{0}^{3}\leq_{3}1)(\forall Y^{2})\big[\kappa_{0}(Y)=0\asa (\exists f\in C)(Y(f)=0)  \big].
\ee  
Other `splittings' are studied in \cite{samsplit}, including $(\kappa_{0}^{3})$.  

\smallskip

Fourth, the Heine-Borel theorem states the existence of a finite sub-covering for an open covering of certain spaces. 
Now, a functional $\Psi:\R\di \R^{+}$ gives rise to the \emph{canonical covering} $\cup_{x\in I} I_{x}^{\Psi}$ for $I\equiv [0,1]$, where $I_{x}^{\Psi}$ is the open interval $(x-\Psi(x), x+\Psi(x))$.  
Hence, the uncountable covering $\cup_{x\in I} I_{x}^{\Psi}$ has a finite sub-covering by the Heine-Borel theorem; in symbols:
\be\tag{$\HBU$}
(\forall \Psi:\R\di \R^{+})(\exists  y_{1}, \dots, y_{k}\in I)(\forall x\in I)( x\in \cup_{i\leq k}I_{y_{i}}^{\Psi}).
\ee
Note that $\HBU$ is almost verbatim \emph{Cousin's lemma} (\cite{cousin1}*{p.\ 22}), i.e.\ the Heine-Borel theorem restricted to canonical coverings.  
This restriction does not make a big difference, as shown in \cite{sahotop}.
By \cite{dagsamIII, dagsamV}, $\Z_{2}^{\Omega}$ proves $\HBU$ but $\Z_{2}^{\omega}+\QFAC^{0,1}$ cannot, 
and basic properties of the \emph{gauge integral} (\cite{zwette, mullingitover}) are equivalent to $\HBU$.  

\smallskip

Fifth, since Cantor space (denoted $C$ or $2^{\N}$) is homeomorphic to a closed subset of $[0,1]$, the former inherits the same property.  
In particular, for any $G^{2}$, the corresponding `canonical covering' of $2^{\N}$ is $\cup_{f\in 2^{\N}}[\overline{f}G(f)]$ where $[\sigma^{0^{*}}]$ is the set of all binary extensions of $\sigma$.  By compactness, there are $ f_0 , \ldots , f_n \in 2^{\N}$ such that the set of $\cup_{i\leq n}[\bar f_{i} G(f_i)]$ still covers $2^{\N}$.  By \cite{dagsamIII}*{Theorem 3.3}, $\HBU$ is equivalent to the same compactness property for $C$, as follows:
\be\tag{$\HBU_{\c}$}
(\forall G^{2})(\exists  f_{1}, \dots, f_{k} \in C ){(\forall f \in C)}(f\in\cup_{i\leq k} [\overline{f_{i}}G(f_{i})]).
\ee
We now introduce the specification $\SCF(\Theta)$ for a (non-unique) functional $\Theta$ which computes a finite sequence as in $\HBU_{\c}$.  
We refer to such a functional $\Theta$ as a \emph{realiser} for the compactness of Cantor space, and simplify its type to `$3$'.  
\be\tag{$\SCF(\Theta)$}
(\forall G^{2})(\forall f^{1}\leq_{1}1)(\exists g\in \Theta(G))(f\in [\overline{g}G(g)]).
\ee
Clearly, there is no unique such $\Theta$ (just add more binary sequences to $\Theta(G)$) and any functional satisfying the previous specification 
is referred to as a `$\Theta$-functional' or a `special fan functional' or a `realiser for $\HBU$'.
As to their provenance, $\Theta$-functionals were introduced as part of the study of the \emph{Gandy-Hyland functional} in \cite{samGH}*{\S2} via a slightly different definition.  
These definitions are identical up to a term of G\"odel's $T$ of low complexity by \cite{dagsamII}*{Theorem 2.6}.  

\smallskip

Sixth, a number of higher-order axioms are studied in \cite{samph} including:  
\be\tag{$\BOOT$}
(\forall Y^{2})(\exists X\subset \N)\big(\forall n\in \N)(n\in X\asa (\exists f \in \N^{\N})(Y(f, n)=0)\big).
\ee
We only mention that this axiom is equivalent to e.g.\ the monotone convergence theorem for nets indexed by Baire space (see \cite{samph}*{\S3}).  
As it turns out, the coding principle $\open^{+}$ from Section \ref{jackoff} is closely related to $\BOOT$ and fragments, as shown in \cite{samph}.
Historical remarks related to $\BOOT$ are as follows.
\begin{rem}[Historical notes]\label{hist}\rm
%
First of all, $\BOOT$ is definable in Hilbert-Bernays' system $H$ from the \emph{Grundlagen der Mathematik} (\cite{hillebilly2}*{Supplement IV}).  In particular, one uses the functional $\nu$ from \cite{hillebilly2}*{p.\ 479} to define the set $X$ from $\BOOT$.  
In this way, $\BOOT$ and subsystems of second-order arithmetic can be said to `go back' to the \emph{Grundlagen} in equal measure, although such claims may be controversial.  

\smallskip

Secondly, after the completion of \cite{samph}, it was observed by the second author that Feferman's `projection' axiom \textsf{(Proj1)} from \cite{littlefef} is similar to $\BOOT$.  The former is however formulated using sets, which makes it more `explosive' than $\BOOT$ in that full $\Z_{2}$ follows when combined with $(\mu^{2})$, as noted in \cite{littlefef}*{I-12}.  Note that \cite{littlefef} is Paper~154 in Feferman's publication list from \cite{guga}, going back to about 1980.
\end{rem}

\section{Reverse Mathematics and the Axiom of Choice}\label{FRAC}
\subsection{Introduction and basic results}\label{basik}
A number of results in \cite{dagsamV, dagsamVII, samph} exhibit the Pincherle phenomenon from Section \ref{pisec}.
In particular, certain equivalences are established using $\QFAC^{0,1}$, while they (often) cannot be established without $\QFAC^{0,1}$.  
At the same time, a much stronger system \emph{not involving $\QFAC^{0,1}$} proves both members of these equivalences.  
In this section, we show that countable choice can be avoided in favour of $\NCC$ from Section \ref{pisec}.
Unsurprisingly, the proofs become more complex and require greater attention to detail. 
Here is a list of theorems from \cite{dagsamV, dagsamVII, samph} to be treated in the aforementioned way.
\begin{itemize}
\item Pincherle's original theorem for Cantor Space (Section \ref{hijo}).
\item The Heine-Borel theorem for \emph{countable coverings} (Section \ref{hbsec}).
\item The Urysohn lemma and Tietze extension theorem (Section \ref{utsec}).
\item The bootstrap axiom $\BOOT$ and the coding of open sets (Section \ref{jackoff}).
\end{itemize}
We only establish the sufficiency of $\NCC$ for these results, while similar results can be treated in the same way.   

\smallskip

The above results are established based on \cite{kohlenbach2}*{\S3} as follows.  As noted in Section~\ref{prelim1}, $(\exists^{2})$ is equivalent to the existence of a discontinuous function on $\R$. 
Hence, $\neg(\exists^{2})$ is equivalent to the statement \emph{all functions on $\R$ are continuous}.  In the latter case, higher-order statements, like e.g.\ $\PIT_{o}$, often reduce to well-known second-order results.   
Since all systems here are classical, we can therefore invoke the law of excluded middle $(\exists^{2})\vee \neg(\exists^{2})$ and split a given proof in e.g.\ $\RCAo$ or $\RCAo+\WKL$ into two parts: one assuming $\neg(\exists^{2})$ which often reduces to second-order results, and a second part assuming $(\exists^{2})$, where the latter is much stronger than the base theory and $\WKL$.  This `excluded middle trick' was pioneered in \cite{dagsamV}.

\smallskip

For the rest of this section, we discuss some basic results and observations regarding $\NCC$.
First of all, consider the following axiom, called $\Delta$-comprehension, essential for many `lifted' proofs from \cites{samrecount, samFLO2, samph}.
\begin{align}
(\forall Y^{2}, Z^{2})\big[ (\forall n^{0})( (\exists f^{1})&(Y(f, n)=0) \asa (\forall g^{1})(Z(g, n)=0) )\tag{$\DCA$} \\
& \di (\exists X^{1})(\forall n^{0})(n\in X\asa (\exists f^{1})(Y(f, n)=0)\big]\notag
\end{align}
Now, $\DCA$ is mapped to recursive comprehension from $\RCA_{0}$ by $\ECF$, i.e.\ the former axiom is needed to do higher-order RM in a fashion similar to second-order RM.  
We have the following theorem, establishing the basic properties of $\NCC$.
\begin{thm}\label{basicfacts}~
\begin{itemize}
\item The system $\RCAo+\BOOT$ proves $\NCC$.  
\item The system $\RCAo$ proves $\QFAC^{0,1}\di \NCC\di \DCA$.
\end{itemize}
\end{thm}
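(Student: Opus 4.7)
The plan is to establish three implications separately: (i) $\BOOT \to \NCC$, (ii) $\QFAC^{0,1} \to \NCC$, and (iii) $\NCC \to \DCA$. In each case, the workhorse is a routine coding trick that passes between existentials over $2^{\N}$ (as appear in $\NCC$) and existentials over $\N^{\N}$ or $\N \times 2^{\N}$ (as appear in the other principles). For (i), given $Y^{2}$ with $A(n,m) \equiv (\exists f \in 2^{\N})(Y(f,m,n)=0)$, I would define $Y'(h,k) := Y(\lambda i.\min(h(i),1),\, (k)_{1},\, (k)_{0})$ where $k$ codes the pair $\langle n,m\rangle$. Applying $\BOOT$ to $Y'$ produces a set $X \subseteq \N$ satisfying $\langle n,m\rangle \in X \leftrightarrow A(n,m)$, and the hypothesis of $\NCC$ then reads $(\forall n)(\exists m)(\langle n,m\rangle \in X)$. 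Since membership in $X$ is quantifier-free, $\QFAC^{0,0}$, which is already available in $\RCAo$ via $\QFAC^{1,0}$ (substitute constant sequences $\lambda i.n$), extracts the desired $g^{1}$.

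For (ii), I would encode a candidate pair $(m,f)$ as a single $h \in \N^{\N}$ via $h(0) := m$ and $h(i+1) := f(i)$, and set $\widetilde{Y}(n, h) := Y(\lambda i.\min(h(i+1), 1),\, h(0),\, n)$. The $\min$ with $1$ coerces the putative $f$ into $2^{\N}$ without introducing a bounded universal quantifier. The hypothesis of $\NCC$ then rewrites as $(\forall n)(\exists h^{1})(\widetilde{Y}(n,h) = 0)$, which is precisely the shape required by $\QFAC^{0,1}$; the resulting $G^{0 \to 1}$ yields $g(n) := G(n)(0)$ as the required choice function, with witness $f_{n} := \lambda i.\min(G(n)(i+1),1) \in 2^{\N}$ for $A(n,g(n))$.

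For (iii), the $\DCA$ hypothesis yields, for each $n$, the disjunction $(\exists f \in \N^{\N})(Y(f,n) = 0) \vee (\exists g \in \N^{\N})(Z(g,n) \neq 0)$. I would package these two alternatives into a single $W^{2}$ indexed by the parity of an auxiliary natural $k$: for $k$ even, $(\exists h \in 2^{\N})(W(h,k,n)=0)$ asserts the $Y$-alternative (after decoding $h \in 2^{\N}$ as an $\N^{\N}$-witness for $Y$ via the standard bijection), and for $k$ odd it asserts the $Z$-alternative (with $W$ defined to output $0$ exactly when $Z(\cdot,n) \neq 0$). The above disjunction gives $(\forall n)(\exists k)(\exists h \in 2^{\N})(W(h,k,n)=0)$, so $\NCC$ produces $\gamma^{1}$ with $(\exists h \in 2^{\N})(W(h,\gamma(n),n)=0)$ for every $n$. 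Setting $X := \{n : \gamma(n)\text{ is even}\}$ (a primitive recursive construction), the forward direction $n \in X \to (\exists f)(Y(f,n)=0)$ is immediate from the definition of $W$, while the reverse direction uses the $\DCA$ hypothesis to rule out odd $\gamma(n)$ whenever $(\exists f)(Y(f,n)=0)$ holds, since that would yield the contradictory $(\exists g)(Z(g,n)\neq 0)$.

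The main subtlety, common to all three steps, is ensuring that the ad hoc encodings remain primitive recursive and that the formulas handed to $\QFAC^{0,0}$, $\QFAC^{0,1}$, and $\NCC$ stay in the proper quantifier-free fragment of $\L_{\omega}$; the use of $\lambda i.\min(h(i),1)$ rather than a guarded $(\forall i)(h(i)\leq 1)$ is the device that handles the restriction from $\N^{\N}$ to $2^{\N}$ throughout, and the parity trick in (iii) is what converts the disjunction produced by the $\DCA$ hypothesis into a single existential claim of the form required by $\NCC$.
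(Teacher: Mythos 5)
Your decomposition and the first two implications coincide with the paper's proof: the paper disposes of $\BOOT\di\NCC$ by noting that $\RCAo$ contains $\QFAC^{0,0}$ (your coercion $\lambda i.\min(h(i),1)$ is exactly the routine coding left implicit there), and it calls $\QFAC^{0,1}\di\NCC$ immediate, which your pairing $h(0)=m$, $h(i+1)=f(i)$ spells out correctly. For $\NCC\di\DCA$ your parity trick is also in essence the paper's argument: the paper applies $\NCC$ to the formula $\big(m=0\di(\exists f^{1})(Y(f,n)=0)\big)\wedge\big((\forall g^{1})(Z(g,n)>0)\di m=0\big)$, so that the value of the resulting choice function records which side of the $\Delta$-equivalence holds at $n$; your even/odd encoding of the disjunction $(\exists f^{1})(Y(f,n)=0)\vee(\exists g^{1})(Z(g,n)\neq 0)$ is the same device, and your verification of both directions of the membership equivalence is fine.

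The one step that fails as written is the appeal to ``the standard bijection'' between $2^{\N}$ and $\N^{\N}$. No such total decoding is available in $\RCAo$: the usual embedding of $\N^{\N}$ into $2^{\N}$ has a partial inverse whose domain (binary sequences with infinitely many ones) is $\Pi^{0}_{2}$, and extending it to a total functional on $2^{\N}$ needs something like $(\exists^{2})$; it cannot be done by a term, nor continuously, since $2^{\N}$ is compact and $\N^{\N}$ is not. The point matters precisely because $\NCC$ quantifies only over $2^{\N}$ while the quantifiers in $\DCA$ range over $\N^{\N}$; in your parts (i) and (ii) you only needed the unproblematic direction, coercing $\N^{\N}$ into $2^{\N}$ via $\min(\cdot,1)$, but here you need the reverse. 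The standard repair, which the paper uses pervasively (its ``excluded middle trick'') though it leaves it implicit in this particular proof, is to split on $(\exists^{2})\vee\neg(\exists^{2})$: given $(\exists^{2})$, the decoding is definable and your $W$ is a legitimate total functional; given $\neg(\exists^{2})$, all $Y^{2},Z^{2}$ are continuous by \cite{kohlenbach2}*{\S3}, so $(\exists f^{1})(Y(f,n)=0)\asa(\exists s^{0})(Y(s*00\dots,n)=0)$ and dually for $Z$, and the set $X$ then already follows from $\QFAC^{0,0}$ and quantifier-free comprehension in $\RCAo$, without using $\NCC$ at all. Equivalently, the same case split shows that the $2^{\N}$- and $\N^{\N}$-forms of $\NCC$ coincide over $\RCAo$, after which your argument goes through verbatim.
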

\begin{proof}
The first item is trivial as $\RCAo$ includes $\QFAC^{0,0}$.  
The first implication in the second item is immediate.  For the second implication in the second item,
consider $Y^{2}, Z^{2}$ that satisfy the antecedent of $\DCA$, i.e.\ 
\[
(\forall n^{0})( (\exists f^{1})(Y(f, n)=0) \asa (\forall g^{1})(Z(g, n)>0) ).
\]
Now apply $\NCC$ to the following (trivial) formula
\[
(\forall n^{0})(\exists m^{0})\big[m=0\di  (\exists f^{1})(Y(f, n)=0)  \wedge (\forall g^{1})(Z(g, n)>0)\di m=0  \big]
\]
to obtain the set required for $\DCA$.  
\end{proof}
We also note that the axiom $\textsf{A}_{0}$ from \cite{samph}*{\S5} trivially implies $\NCC$.  The former axiom is used in \cite{samph} to calibrate theorems based 
on fragments of the \emph{neighbourhood function principle} $\NFP$ (\cite{troeleke1}), a scale finer than (higher-order) comprehension.

\smallskip

Finally, $\NCC$ is also interesting for conceptual reasons: as shown in \cite{dagsamX}*{\S3.2}, $\NCC$ implies the principle $\NBI$, that there is no \emph{bijection} from $[0,1]$ to $ \N$, but $\NCC$ cannot prove $\NIN$, 
that there is no \emph{injection} from $[0,1]$ to $\N$, even when combined with $\Z_{2}^{\omega}$.  
Thus, $\NCC$ is intimately connected to the uncountability of $\R$.

\subsection{Pincherle's theorem}\label{hijo}
In this section, we show that $\NCC$ suffices to obtain the equivalence $\WKL\asa \PIT_{o}$, where the latter is Pincherle's `original' theorem, which 
is mentioned in Section \ref{pisec} and defined as in $\PIT_{o}$ below:
\be\label{LOC}\tag{$\LOC(F, G)$}
 (\forall f , g\in C)\big[ g\in [\overline{f}G(f)] \di F(g)\leq G(f)    \big],
\ee
\be\tag{$\PIT_{o}$}
(\forall F, G:C\di \N)\big[  \LOC(F, G)\di (\exists N\in \N)(\forall g \in C)(F(g)\leq N)\big].
\ee
Note that $\LOC(F, G)$ expresses that $F$ is locally bounded on $2^{\N}$ and $G$ realises this fact.  
As discussed in \cite{dagsamV}, Pincherle explicitly assumes such realisers in \cite{tepelpinch}.
Corollary \ref{waha} deals with $\PIT_{o}$ without such realisers. 
\begin{thm}\label{hijot} 
The system $\RCAo+\NCC$ proves $\WKL\asa \PIT_{o}$.
\end{thm}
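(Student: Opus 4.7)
My plan is to prove both directions, invoking the excluded-middle trick of \cite{dagsamV} for the forward direction.

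For the reverse implication $\PIT_{o} \di \WKL$, which requires no appeal to $\NCC$, I would work by contradiction. Given an infinite binary tree $T\subseteq 2^{<\N}$ assumed to have no path, define $F(g) := G(g) :=$ least $n$ with $\overline{g}n \notin T$; this is well-defined exactly because $T$ is pathless. To verify $\LOC(F,G)$: if $g \in [\overline{f}G(f)]$ then $\overline{g}G(f) = \overline{f}G(f) \notin T$, so $F(g) = G(g) \le G(f)$. Applying $\PIT_{o}$ returns $N$ bounding $F$ on $C$, so $\overline{g}N \notin T$ for every $g \in C$; since $T$ is closed under prefixes, this forces $T \cap 2^{\ge N} = \emptyset$, contradicting infinitude.

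For $\WKL \di \PIT_{o}$, take $F, G$ with $\LOC(F, G)$ and split on $(\exists^{2})$. Under $\neg(\exists^{2})$ every type-two functional is continuous (cf.\ \cite{kohlenbach2}*{\S3}), so $F$ and $G$ admit associates as in $\ECF$, and $\PIT_{o}$ becomes a second-order statement about coded countable open covers of $C$ that is provable from $\WKL$ alone. Under $(\exists^{2})$ I would assume for contradiction that $F$ is unbounded on $C$. Using $(\exists^{2})$ to form the type-two functional $Y(f,m,n) := 0 \leftrightarrow (F(f) = m \wedge m > n)$, the hypothesis supplies $(\forall n)(\exists m)(\exists f \in 2^{\N})(Y(f,m,n)=0)$, so $\NCC$ hands over a selector $h:\N\to\N$ with $h(n)>n$ such that some $f \in C$ realises $F(f) = h(n)$ for each $n$. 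Parameterising this step over finite binary strings $\sigma \in 2^{<\N}$ (coded as naturals) and re-applying $\NCC$ to the cylinder-restricted versions of $Y$ furnishes enough numerical data to define, inside $\RCAo+(\exists^{2})+\NCC$, an infinite subtree $T^{*}\subseteq 2^{<\N}$ whose nodes record the cylinders on which $F$ provably achieves arbitrarily large values. $\WKL$ applied to $T^{*}$ then returns a path $f^{*} \in C$ along which $F$ is unbounded on every cylinder $[\overline{f^{*}}k]$, in direct contradiction with the local bound $G(f^{*})$ supplied by $\LOC(F,G)$ at $[\overline{f^{*}}G(f^{*})]$.

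The principal obstacle will be the construction of $T^{*}$ in the $(\exists^{2})$-case. With $\QFAC^{0,1}$ in hand one would immediately extract a sequence $(f_{n})\subseteq C$ with $F(f_{n})>n$ and invoke sequential compactness of $C$ via $\WKL$; the weaker principle $\NCC$ supplies only numerical witnesses. The technical heart of the argument is therefore to arrange the defining predicate of $T^{*}$ so that membership is $(\exists^{2})$-arithmetical in the output of a single (or finitely many) appropriately parameterised $\NCC$-selectors, thereby avoiding any implicit choice of elements of $2^{\N}$.
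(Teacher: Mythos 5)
Your reverse direction is a correct direct argument (the paper simply cites \cite{dagsamV} for it), and the scaffolding of your forward direction---the excluded-middle split on $(\exists^{2})$, with the $\neg(\exists^{2})$ case handled via continuity and $\WKL$ exactly as the paper does via \cite{kohlenbach4}---is also fine. The genuine gap is in the $(\exists^{2})$ case, i.e.\ precisely where $\NCC$ must do the work of $\QFAC^{0,1}$, and it sits at the point you yourself flag as the ``technical heart''. Your one concrete $\NCC$ application only produces values $h(n)>n$ that $F$ attains somewhere on $2^{\N}$; it carries no information about \emph{where} they are attained, so it cannot feed a tree construction. The proposed repair---``re-applying $\NCC$ to the cylinder-restricted versions of $Y$'' to build a tree $T^{*}$ of cylinders on which $F$ is unbounded---does not work as stated: for a cylinder $[\sigma]$ on which $F$ is bounded (and $\LOC(F,G)$ guarantees most cylinders are of this kind) the hypothesis of $\NCC$ fails and the principle yields nothing, and $\NCC$ is an implication, not a functional, so there is no ``output'' to inspect in that case. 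Worse, deciding \emph{which} cylinders carry unbounded $F$ amounts to comprehension for the predicate $(\exists g\in 2^{\N})(F(\sigma*g)\geq n)$, which is $\Sigma^{1}_{1}$-like relative to the third-order $F$ and has no universal normal form; $\NCC$ is a choice principle and only yields $\Delta$-comprehension ($\DCA$), which does not apply. Hence $T^{*}$, as you describe it, cannot be formed from $\RCAo+(\exists^{2})+\NCC$, and the $(\exists^{2})$ case remains unproved.

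The missing idea is to make the finite string itself the object chosen by $\NCC$: unboundedness of $F$ gives, for every $n$, a binary string $\sigma$ of length $n$ (coded as a number $m$) with $(\exists g\in 2^{\N})(F(\sigma*g)\geq n)$, and this is literally of the form $(\forall n)(\exists m)(\exists g\in 2^{\N})(Y(g,m,n)=0)$ required by $\NCC$. A single application yields a sequence $H(n)$ of such strings. The paper then pads these to points $f_{n}:=H(n)*00\dots$ and uses sequential compactness of $2^{\N}$ (available from $(\exists^{2})$, so no further use of $\WKL$ is needed in this case) to obtain a limit $g_{0}$, whose local bound $G(g_{0})$ is violated by a point of the form $H(h(m_{0}))*g_{1}$. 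Alternatively, and closer to your sketch, one can let $T^{*}$ be the downward closure of $\{H(n):n\in\N\}$: membership is arithmetical in $H$, so the tree exists by $(\exists^{2})$; it is infinite and binary, and any path $f^{*}$ satisfies $\overline{f^{*}}k\sqsubseteq H(n)$ for some $n\geq k$ at every level $k$, so taking $k=G(f^{*})+1$ contradicts $\LOC(F,G)$. Either way, the $\NCC$ application must \emph{pick nodes} rather than certify a property of cylinders; without that reformulation your argument does not go through.
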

\begin{proof} 
The reverse implication is proved in \cite{dagsamV}*{Cor.\ 4.8} over $\RCAo$.  For the forward direction, let $F:2^{\N}\di \N$ be a totally bounded function with realiser $G:2^{\N}\di \N$, i.e.\ we have $(\forall f, g\in 2^{\N})(g \in [\overline{f}G(f)] \di F(g)\leq G(f) )$.  
In case $\neg(\exists^{2})$, $F$ is continuous by \cite{kohlenbach2}*{\S3} and it is well-known that $\WKL$ suffices to prove that $F$ has an upper bound \emph{in this case} (see \cite{kohlenbach4}*{\S4}).  
In case $(\exists^{2})$, suppose $F$ is unbounded on $2^{\N}$, i.e.\ $(\forall n^{0})(\exists f\ \in 2^{\N})(F(f)\geq n\)$.  The following is immediate:
\be\label{worng}
(\forall n^{0})(\exists \sigma^{0^{*}}\leq_{0^{*}}1)\big[|\sigma| = n \wedge (\exists g\in 2^{\N})(F(\sigma * g)\geq n)\big].
\ee
The formula in big square brackets has the right form (modulo coding) to apply $\NCC$.  
Let $H^{0\di 0^{*}}$ be the sequence thus obtained and define $f_{n}:= H(n)*00\dots$.  
Since $(\exists^{2})$ is given, the sequence $f_{n}$ has a convergent subsequence $f_{h(n)}$ with limit $g_{0}$ (see \cite{simpson2}*{III.2}), i.e.\ we have
\be\label{hinge}
 (\forall k^{0})(\exists n^{0})(\forall m^{0}\geq n)(\overline{g_{0}}k=_{0^{*}} \overline{f_{h(m)}}k).
\ee
Now, apply \eqref{hinge} for $k_{0}=G(g_{0})+1$ and obtain the associated $n_{0}$.
For $m_{0}=\max (n_{0}, G(g_{0})+1)$, we then have that $\overline{f_{h(m_{0})}}h(m_{0})*g \in [\overline{g_{0}}G(g_{0})]$ for any $g\in 2^{\N}$ as $h(m_{0})\geq m_{0}\geq G(g_{0})+1$, and hence $F(\overline{f_{h(m_{0})}}h(m_{0})*g)\leq G(g_{0})$ for any $g\in 2^{\N}$ by local boundedness.  
However, the definition of $f_{h(m_{0})}$ implies that there is $g_{1}\in 2^{\N}$ such that $F(\overline{f_{h(m_{0})}}h(m_{0})*g_{1})=F(H(h(m_{0}))*g_{1})\geq h(m_{0})$.  
The assumption $h(m_{0})\geq m_{0}\geq G(g_{0})+1$ thus yields a contradiction. 
\end{proof} 
Finally, let $\PIT_{o}'$ be $\PIT_{o}$ with the antecedent weakened as follows:
\be\label{culk}
(\forall f \in C)(\exists n^{0})(\forall g\in C)\big[ g\in [\overline{f}n] \di F(g)\leq n   \big].
\ee
As expected, \eqref{culk} gives rise to the following corollary.
\begin{cor}\label{waha}
The system $\RCAo+\NCC$ proves $\WKL\asa \PIT_{o}'$.
\end{cor}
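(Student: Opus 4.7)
The plan is to reduce $\PIT_{o}'$ to the proof template of Theorem \ref{hijot} by handling the two directions of the equivalence asymmetrically.  The reverse direction $\PIT_{o}' \di \WKL$ is essentially free: any realiser $G$ as in the antecedent of $\PIT_{o}$ yields, for each $f \in C$, the witness $n := G(f)$ required by \eqref{culk}, so $\PIT_{o}'$ immediately implies $\PIT_{o}$, and \cite{dagsamV}*{Cor.\ 4.8} finishes the job over $\RCAo$.

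For the forward direction $\WKL \di \PIT_{o}'$, I would follow the excluded-middle template $(\exists^{2}) \vee \neg (\exists^{2})$ from \cite{dagsamV}.  Under $\neg (\exists^{2})$ the functional $F : C \di \N$ is continuous by \cite{kohlenbach2}*{\S3}, and the existence of an upper bound under \eqref{culk} reduces to the standard second-order argument with $\WKL$ along the lines of \cite{kohlenbach4}*{\S4}.  Under $(\exists^{2})$ I would copy the unboundedness-to-contradiction argument of Theorem \ref{hijot} wholesale: suppose $F$ is unbounded, extract via $\NCC$ a sequence $H^{0 \di 0^{*}}$ witnessing \eqref{worng}, form $f_{n} := H(n) * 00 \ldots$, and use $(\exists^{2})$ together with sequential compactness of Cantor space (\cite{simpson2}*{III.2}) to obtain a subsequence $f_{h(n)}$ with limit $g_{0}$ satisfying \eqref{hinge}.

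The one genuine change relative to Theorem \ref{hijot} is the extraction of the local bound around $g_{0}$: there this bound was $G(g_{0})$, but here I would invoke the weakened hypothesis \eqref{culk} directly at $g_{0}$ to produce $n_{0}$ with $F(g) \leq n_{0}$ for all $g \in [\overline{g_{0}}n_{0}]$.  The rest of the contradiction then carries over verbatim: apply \eqref{hinge} with $k_{0} := n_{0} + 1$ to obtain $n_{1}$, choose $m_{0} := \max(n_{1}, n_{0} + 1)$, and use $h(m_{0}) \geq m_{0} \geq n_{0} + 1$ to force $H(h(m_{0})) * g \in [\overline{g_{0}}n_{0}]$ and hence $F(H(h(m_{0})) * g) \leq n_{0}$ for every $g \in C$, which clashes with the guarantee $F(H(h(m_{0})) * g_{1}) \geq h(m_{0}) \geq n_{0} + 1$ produced by the definition of $H$.

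I do not expect a substantive obstacle.  The weakened antecedent \eqref{culk}, while presumably too weak to deliver a global realiser $G$ in $\RCAo + \NCC$, is precisely strong enough to furnish the single local bound required at the one limit point $g_{0}$ exhibited by the argument.  The step that one should nevertheless double-check is the continuous case $\neg (\exists^{2})$: one must verify that the bounding principle for continuous $F : C \di \N$ under the hypothesis \eqref{culk} (rather than a pre-supplied modulus) is still provable from $\WKL$ alone, but this is routine via the standard second-order machinery of moduli of uniform continuity on $C$.
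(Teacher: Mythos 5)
Your proposal is correct and matches the paper's proof, which consists precisely of re-running the argument of Theorem \ref{hijot} with $G(g_{0})$ replaced by the local bound obtained from \eqref{culk} at $f=g_{0}$; you simply spell out this substitution (and the easy reversal via $\PIT_{o}'\di\PIT_{o}$) in more detail, and your remark that the continuous case under $\neg(\exists^{2})$ goes through unchanged is also in line with the paper.
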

\begin{proof}
Replace $G(g_{0})$ with the number $n_{1}^{0}$ obtained for $f=g_{0}$ in \eqref{culk}.
\end{proof}
The previous results should be contrasted with the fact that $\Z_{2}^{\omega}$ cannot prove $\PIT_{o}$, while $\PIT_{o}$ is provable in $\Z_{2}^{\Omega}$ (and hence $\ZF$).

\subsection{Closed and open sets}
We study theorems named after Tietze and Urysohn (Section \ref{utsec}) and Heine and Borel (Section \ref{hbsec}), formulated using \emph{higher-order} open and closed sets.  
The latter notion is introduced in Section \ref{copen}, along with more details.  In each case, we show that $\NCC$ can replace the use of $\QFAC^{0,1}$.

\subsubsection{Introduction}\label{copen}
In this section, we study theorems from \cite{dagsamVII} that exhibit the Pincherle phenomenon.  
In particular, we show that $\QFAC^{0,1}$ is not necessary, but that $\NCC$ suffices in these results. 
These theorems pertain to open and closed sets given by characteristic functions, defined as follows.   
\bdefi[Open sets in $\RCAo$ from \cite{dagsamVII}]\label{openset}
We let $Y: \R \di \R$ represent open subsets of $\R$ as follows: we write `$x \in Y$' for `$|Y(x)|>_{\R}0$' and call a set $Y\subseteq \R$ Ôopen' if for every $x \in Y$, there is an open ball $B(x, r) \subset Y$ with $r^{0}>0$.  
A set $Y$ is called `closed' if the complement, denoted $Y^{c}=\{x\in \R: x\not \in Y \}$, is open. 
\edefi
We have argued in \cite{dagsamVII} that this definition remains close to the `$\Sigma_{1}^{0}$-definition' of open set used in RM. 
In the case of sequential compactness, Definition \ref{openset} yields the known results involving $\ACA_{0}$, while \emph{countable} open-cover compactness
already gives rise to the Pincherle phenomenon, as sketched in Section \ref{hbsec}. 

\smallskip

For the rest of this section, `open' and `closed' refer to Definition \ref{openset}, while `RM-open' and `RM-closed' refer to the usual RM-definition from \cite{simpson2}*{II.4}. 

\subsubsection{Heine-Borel theorem}\label{hbsec}
We now study the Heine-Borel theorem for countable covers of closed sets as in Definition \ref{openset}. 
Note that the associated theorem for RM-codes is equivalent to $\WKL$ by \cite{brownphd}*{Lemma 3.13}.  
\bdefi[$\HBC$]
Let $C\subseteq [0,1]$ be a closed set and let $a_{n}, b_{n}$ be sequences of reals such that $C\subseteq \cup_{n\in \N}(a_{n}, b_{n})$.  Then there is $n_{0}$ such that $C\subseteq \cup_{n\leq n_{0}}(a_{n}, b_{n})$.
\edefi
\noindent
It is shown in \cite{dagsamVII} that $\HBC$ has the following properties.  
\begin{itemize}
\item The system $\RCAo+\QFAC^{0,1}$ proves $\WKL\asa \HBC$.
\item The system $\Z_{2}^{\omega}$ cannot prove $\HBC$, while $\Z_{2}^{\Omega}$ (and $\RCAo+\HBU$) can.
\end{itemize}
By the these items, $\HBC$ clearly exhibits the Pincherle phenomenon.  
Note that by the second item, $\HBC$ is provable without countable choice and has weak first-order strength.  
We let $\HBC_{\RM}$ be $\HBC$ with $C\subseteq [0,1]$ represented by RM-codes.  

\smallskip

We now prove the following theorem. 
\begin{thm}\label{dich}
The system $\RCAo+\NCC$ proves $\WKL\asa \HBC$.
\end{thm}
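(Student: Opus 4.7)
The plan is to establish both directions of $\WKL \asa \HBC$ over $\RCAo + \NCC$ via the \emph{excluded-middle trick} of Section~\ref{basik}. The reverse implication $\HBC \di \WKL$ requires no $\NCC$: any RM-closed set with RM-open cover is a special instance of higher-order $\HBC$ (convert the RM-code of the complementary open set into a characteristic-like $Y\colon \R \di \R$), so $\HBC \di \HBC_{\RM} \di \WKL$ by \cite{brownphd}*{Lemma 3.13}. Hence I concentrate on $\WKL \di \HBC$ and split on $(\exists^{2}) \vee \neg(\exists^{2})$.

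In the case $\neg(\exists^{2})$, every function $\R \di \R$ is continuous by \cite{kohlenbach2}*{\S3}, so the characteristic-like function $Y$ defining $C$ admits an RM-code. Then $C$ and $(a_n,b_n)$ reduce to second-order data, and $\HBC$ for such an instance follows from $\HBC_{\RM}$, itself proved from $\WKL$.

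In the case $(\exists^{2})$, suppose for contradiction that no finite sub-cover exists: $(\forall N)(\exists x \in C)(x \notin \cup_{n \leq N}(a_n,b_n))$. I strengthen this with dyadic localisation: for each $N$ there exist $m < 2^N$ and $f \in 2^{\N}$ with $\r(f) \in C$, $\r(f) \in I_m^N := [m \cdot 2^{-N}, (m+1) \cdot 2^{-N}]$, and $\r(f) \notin \cup_{n \leq N}(a_n,b_n)$. Under $(\exists^{2})$, this inner conjunction is decidable (hence quantifier-free) in $f, m, N$, so the hypothesis has exactly the shape required by $\NCC$. Applying $\NCC$ yields $g \colon \N \di \N$ with $g(N) < 2^N$ such that for each $N$ there exists a (non-extracted) $x_N \in C \cap I_{g(N)}^N$ with $x_N \notin \cup_{n \leq N}(a_n,b_n)$.

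Let $h(N) \in \{0,1\}^N$ be the $N$-bit binary encoding of $g(N)$, so that $I_{g(N)}^N = [h(N)]_{\r}$. The set $T := \{\sigma \in 2^{<\N} : (\exists N \geq |\sigma|)(\sigma = \overline{h(N)}|\sigma|)\}$ is $\Sigma_1^0$ in $\sigma$, hence decidable under $(\exists^{2})$, is prefix-closed, and contains $h(N)$ of length $N$ for every $N$, so $T$ is an infinite subtree of $2^{<\N}$. By $\WKL$, $T$ has an infinite path $\alpha$. For each $k$ there is $N_k \geq k$ with $\overline{h(N_k)}k = \bar\alpha k$, so any existential witness $x_{N_k}$ lies in $[\bar\alpha k]_{\r}$ and hence within $2^{-k}$ of $\r(\alpha)$. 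Openness of $C^c$ then forces $\r(\alpha) \in C$ (else $x_{N_k}$ would leave $C$ for large $k$), so $\r(\alpha) \in (a_{n_0},b_{n_0})$ for some $n_0$; for $k$ large enough that $[\bar\alpha k]_{\r} \subseteq (a_{n_0},b_{n_0})$ and $N_k \geq n_0$, \emph{every} such $x_{N_k}$ lies in $\cup_{n \leq N_k}(a_n,b_n)$, contradicting its defining property. The main subtlety is the use of $\NCC$ in place of $\QFAC^{0,1}$: $\NCC$ returns only a number-valued function $g$, so I phrase the no-subcover hypothesis so that $g(N)$ localises an (unnamed) witness inside a $2^{-N}$-dyadic interval---precisely the data needed to form $T$ and apply $\WKL$.
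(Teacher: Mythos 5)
Your proof is correct and follows essentially the same route as the paper: the reversal via $\HBC_{\RM}$, the excluded-middle split on $(\exists^{2})$, the reduction to RM-codes under $\neg(\exists^{2})$, and, under $(\exists^{2})$, the application of $\NCC$ to number-valued localisations of the unnamed witnesses outside $\cup_{n\leq N}(a_{n},b_{n})$, followed by extraction of a limit point of $C$ that must lie in some $(a_{n_{0}},b_{n_{0}})$, yielding the contradiction. The only (inessential) variation is how the limit point is obtained: the paper has $\NCC$ return rational approximations $[x](m)=q$ and applies Bolzano--Weierstrass (available since $(\exists^{2})\di\ACA_{0}$), whereas you have $\NCC$ return dyadic-interval indices and extract a path through the resulting infinite binary tree via $\WKL$.
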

\begin{proof}
The reversal can be found in \cite{dagsamVII}*{Cor.\ 3.4} over $\RCAo$.  
It also follows from taking $C=[0,1]$ in $\HBC$ and applying \cite{simpson2}*{IV.1.2}.
For the forward direction, in case $\neg(\exists^{2})$, all functions on $\R$ are continuous by \cite{kohlenbach2}*{\S3}.  Following the results in \cite{kohlenbach4}*{\S4}, 
continuous functions have an RM-code on $[0,1]$ given $\WKL$, i.e.\ 
our definition of open set reduces to an $\L_{2}$-formula in $\Sigma_{1}^{0}$, which (equivalently) defines a code for an open set by \cite{simpson2}*{II.5.7}.  In this way, $\HBC$ is merely $\HBC_{\RM}$, which follows
from $\WKL$ by \cite{brownphd}*{Lemma 3.13}.    
In case $(\exists^{2})$, let $C\subseteq [0,1]$ be a closed set and let $a_{n}, b_{n}$ be as in $\HBC$.  
If there is no finite sub-cover, then we also have that
\be\label{slightvar}
(\forall m^{0})(\exists q\in \Q)(\exists x\in C)\big[ [x](m)=q\wedge    x\not \in  \cup_{n\leq m}(a_{n}, b_{n})\big].  
\ee
Apply $\NCC$ and $(\exists^{2})$ to \eqref{slightvar}, yielding a sequence $(q_{n})_{n\in \N}$ of rationals in $C$ with this property.
Since $(\exists^{2})\di \ACA_{0}$, any sequence in $[0,1]$ has a convergent sub-sequence \cite{simpson2}*{III.2}.  Let $h:\N\di \N$ be such that $y_{n}:= q_{h(n)}$ converges to $y\in [0,1]$.

\smallskip

If $y\not\in C$, then there is $N^{0}$ such that $B(y, \frac{1}{2^{N}})\subset C^{c}$, as the complement of $C$ is open by definition. 
However, $y_{n}$ is eventually in $B(y, \frac{1}{2^{N}})$ by definition, a contradiction.   
Note that $y_{n}$ may not be in $C$, but elements of $C$ are arbitrarily close to $y_{n}$ for large enough $n$ by \eqref{slightvar}.  

\smallskip

Hence, we may assume $\lim_{n\di \infty}y_{n}=y\in C$.  However, then $y\in (a_{k}, b_{k})$ for some $k$, and $y_{n}$ is eventually in this interval.  In the same way as in the previous case, this yields a contradiction.
The law of excluded middle now finishes the proof.
\end{proof}
\noindent
As shown in \cite{dagsamVII}*{\S3}, the following theorems imply $\HBC$ over $\RCAo$:
\begin{enumerate}
 \renewcommand{\theenumi}{\alph{enumi}}
\item PincherleÕs theorem for $[0,1]$\textup{:} a locally bounded function on $[0,1]$ is bounded.\label{mikkel}  
\item If $F^{2}$ is continuous on a closed set $D\subset 2^{\N}$, it is bounded on $D$.\label{bounk}
\item If $F^{2}$ is continuous on a closed set $D \subset 2^{\N}$, it is uniformly cont.\ on $D$.\label{bounk2}
\item If $F$ is continuous on a closed set $D\subset [0,1]$, it is bounded on $D$.\label{bounk0}
\item If $F$ is continuous on a closed set $D \subset [0,1]$, it is uniformly cont.\ on $D$.\label{bounk3}
\item If $F$ is continuous on a closed set $D\subset [0,1]$, then for every $\eps>0$ there is a polynomial $p(x)$ such that $|p(x)-F(x)|<\eps$ for all $x\in D$. \label{bounk6}
\end{enumerate}
In the same way as above, one obtains an equivalence between these theorems and $\WKL_{0}$, using $\NCC$ instead of $\QFAC^{0,1}$.

\smallskip

We finish this section with a remark on the Baire category theorem.
\begin{rem}\label{cdki}\rm
The Baire category theorem for open sets as in Definition \ref{openset} is studied in \cite{dagsamVII}*{\S6}. 
Similar to e.g.\ $\HBC$, the Baire category theorem 
exhibits (part of) the Pincherle phenomenon.  The associated proofs for the latter theorem are however 
very different from all other proofs.  Similarly, $\NCC$ does not seem to suffice to prove the Baire category theorem and 
the following one does.  
\bdefi[$\MCC$]
For $Y^{2}$ and $A(n, m)\equiv (\forall g\in 2^{\N})(Y(g, m, n)=0)$:
\[
(\forall n^{0})(\exists m^{0})A(n,m)\di  (\exists h^{1})(\forall n^{0})A(n,h(n)). 
\]
\edefi
We have not found any use for $\MCC$ besides, but it shall be seen to yield 
the same class as realisers as $\NCC$ in Section \ref{cac}.
\end{rem}

\subsubsection{Urysohn's lemma and Tietze's theorem}\label{utsec}
We study the equivalence between the Urysohn lemma $(\URY)$ and the Tietze extension theorem $(\TIET)$, formulated using open sets as in Definition \ref{openset}.  
In particular, this equivalence is proved in \cite{dagsamVII}*{\S5} using $\QFAC^{0,1}$ and we now show that $\NCC$ suffices. 

\smallskip

We first consider the following necessary definitions.  
\bdefi[$\URY$]
For closed disjoint sets $C_{0}, C_{1}\subseteq \R$, there is a continuous function $g:\R\di [0,1]$ such that $x\in C_{i}\asa g(x)=i$ for any $x\in \R$ and $i\in \{0,1\}$.
\edefi
\bdefi[$\TIET$]
For $f:\R\di \R$ continuous on the closed $D\subset[0,1]$, there is $g:\R\di \R$, continuous on $[0,1]$ such that $f(x)=_{\R}g(x)$ for $x\in D$.
\edefi
Secondly, $\URY\asa \TIET$ is proved in \cite{dagsamVII}*{\S5} using $\QFAC^{0,1}$ \emph{and} $\cont$, where the latter is the statement that every continuous $Y:\R\di \R$ has an RM-code, as studied in \cite{kohlenbach4}*{\S4} for Baire space.  
Note that the $\ECF$-interpretation of $\cont$ is a tautology.
We have the following nice equivalence. 
\begin{thm}\label{tieten}
The system $\RCAo+\NCC+\cont$ proves $\TIET\asa \URY$.  
\end{thm}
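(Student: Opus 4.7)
The plan is to prove both directions via the excluded-middle strategy of \cite{dagsamV}, mirroring the proofs of Theorems \ref{hijot} and \ref{dich}.

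For $\TIET \di \URY$: given disjoint closed $C_0, C_1 \subseteq \R$, set $D := (C_0 \cup C_1) \cap [0,1]$; a routine verification (that unions and intersections of closed sets are closed) shows $D$ is closed in $\R$. Define $f : \R \di \R$ by $f(x) := i$ on $C_i$ and $0$ elsewhere; then $f$ is locally constant, hence continuous, on $D$. Apply $\TIET$ to produce a continuous extension $\tilde f$ on $[0,1]$, postcompose with the clipping $t \mapsto \min(1, \max(0, t))$ to land in $[0,1]$, and extend to $\R$ by constant values. No use of $\NCC$ beyond elementary manipulations is required, and the case $\neg(\exists^2)$ reduces via $\cont$ to the second-order RM setting.

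For the main direction $\URY \di \TIET$ I split on $(\exists^2) \vee \neg(\exists^2)$. If $\neg(\exists^2)$ then by \cite{kohlenbach2}*{\S3} every $F:\R \di \R$ is continuous, and $\cont$ provides RM-codes for all continuous functions. Open sets from Definition \ref{openset} collapse to $\Sigma^0_1$-coded open sets, so both $\URY$ and $\TIET$ reduce to their standard second-order counterparts, where the iterative Tietze derivation from $\URY$ is classical and goes through in $\RCAo$ (with $\WKL$ if needed). If $(\exists^2)$ then $\ACAo$ is at our disposal. Assuming $|f|\leq 1$ on $D$, I follow the inductive Urysohn recipe: at each stage $n$ produce continuous $g_n:[0,1]\di [-(2/3)^n/3, (2/3)^n/3]$ by applying $\URY$ to the closed sets
\[ \{x \in D : (f - \textstyle\sum_{k<n} g_k)(x) \leq -(2/3)^n/3\} \text{ and } \{x \in D : (f - \textstyle\sum_{k<n} g_k)(x) \geq (2/3)^n/3\}, \]
guaranteeing $|f - \sum_{k<n} g_k| \leq (2/3)^n$ on $D$; then $g:=\sum_n g_n$ is a uniform limit extending $f|_D$ and continuous on $[0,1]$.

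The delicate step, where $\QFAC^{0,1}$ was invoked in \cite{dagsamVII}, is collecting the $g_n$ into a single type-two sequence. Using $\cont$, each admissible $g_n$ admits an RM-code, and since $\|g_n\|_\infty \leq 1/3$ uniformly the code's values on a fixed enumeration of the dyadic rationals can be repackaged as an element of $2^\N$. I apply $\NCC$ to the formula $(\forall n)(\exists m)(\exists f \in 2^\N)(Y(f, m, n)=0)$, where $Y$ (well-defined using $(\exists^2)$) verifies that $f$ encodes an entire joint history $(g_0,\dots,g_n)$ of Urysohn witnesses satisfying the cumulative constraints up to stage $n$, with $m$ recording any finite bookkeeping. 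The resulting $\NCC$-choice $n \mapsto m_n$, together with $\DCA$ (available from $\NCC$ by Theorem \ref{basicfacts}) and arithmetical comprehension, is expected to suffice to extract the sequence $(g_n)$ and to verify continuity of $g$. The main obstacle is precisely this coding-and-extraction step: absorbing the inductive dependence of $g_n$ on $g_0,\dots,g_{n-1}$ into a single stage-$n$ witness---rather than invoking genuine dependent choice---so that the $\NCC$-formula stays well-formed, and so that the numeric output (with $(\exists^2)$ and $\DCA$) pins down the witnesses up to the equivalence required for the summation.
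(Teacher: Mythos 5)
Your proof has the load-bearing structure inverted relative to the paper, and the direction on which you spend $\NCC$ is both unnecessary and not actually carried out. In the paper, $\URY\di\TIET$ is the easy direction: it is simply quoted from \cite{dagsamVII}*{\S5} as provable over $\RCAo$, with no choice principle at all. Your attempt to re-prove it via the classical iterative series construction runs into exactly the problem you flag yourself: the sequence $(g_n)$ is built by a \emph{dependent} recursion (each $g_n$ is chosen using $g_0,\dots,g_{n-1}$), whereas $\NCC$ only returns a number-valued choice function. Letting $f\in 2^{\N}$ ``encode an entire joint history $(g_0,\dots,g_n)$'' only asserts, stage by stage, that \emph{some} admissible history exists; nothing forces the histories witnessing different stages to cohere, and the numeric outputs $m_n$ (even with $(\exists^{2})$ and $\DCA$) provide no mechanism for extracting a single infinite sequence of Urysohn functions. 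So this direction is not proved -- and it does not need this machinery, since it is already available in $\RCAo$.

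Conversely, the direction you dismiss as elementary, $\TIET\di\URY$, is precisely where the paper uses $\NCC$. In the case $(\exists^{2})$, the paper defines the indicator-type function $f$ on $C_{2}=C_{0}\cup C_{1}$ and then proves the \emph{uniform} separation property \eqref{sepa} -- for each $N$ a single $n$ with $|x-y|\geq 2^{-n}$ for all $x\in C_{0}$, $y\in C_{1}$ in $[-N,N]$ -- by contradiction: $\NCC$ extracts sequences of rational approximations to near-colliding pairs, $(\exists^{2})$ yields convergent subsequences, closedness places the common limit in both $C_{0}$ and $C_{1}$, contradicting disjointness. It is this uniform separation, not the pointwise ``locally constant'' observation, that the paper uses to verify the continuity hypothesis of $\TIET$ for $f$; your one-line claim bypasses it entirely and does not explain why the same step required $\QFAC^{0,1}$ in \cite{dagsamVII}. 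Moreover, your sketch does not address how the extension supplied by $\TIET$ (continuous on $[0,1]$ and agreeing with $f$ only on $D\subseteq[0,1]$) yields the actual conclusion of $\URY$: the biconditional $x\in C_{i}\asa g(x)=i$ on all of $\R$, for closed sets $C_{0},C_{1}\subseteq\R$ that need not lie in $[0,1]$; clipping and constant extension give neither. So the direction carrying the real content of the theorem is also left unestablished.
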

\begin{proof}
The implication $\URY\di \TIET$ is proved in \cite{dagsamVII}*{\S5} over $\RCAo$.

\smallskip

For $\TIET\di \URY$, in case $\neg(\exists^{2})$, all functions on $\R$ are continuous by \cite{kohlenbach2}*{\S3} and open sets reduce to RM-codes via $\cont$; the usual proof of $\URY$ from \cite{simpson2}*{II.7} then goes through. 
In case $(\exists^{2})$, let $C_{i}$ be as in $\URY$ for $i=0,1$ and define $f$ on $C_{2}:= C_{0}\cup C_{1} $ as follows: $f(x)=0$ if $x\in C_{0}$ and $1$ otherwise.  
\emph{If} $f$ is continuous on $C_{2}$, \emph{then} its extension $g$ provided by $\TIET$ is as required for $\URY$.  To show that 
$f$ is continuous on $C_{2}$, we prove that 
\be\label{sepa}\textstyle
(\forall N^{0})(\exists n^{0})(\forall x\in C_{0}, y\in C_{1})(x, y\in [-N, N]\di |x-y|\geq  \frac{1}{2^{n}}). 
\ee
If \eqref{sepa} is false, there is $N\in \N$ such that for $n\in \N$, there are $q, r\in \Q$ such that:
\[\textstyle
(\exists x\in C_{0}, y\in C_{1})( [x](n+1)=q \wedge [y](n+1)=r  \wedge x, y\in [-N, N]\wedge |x-y|<  \frac{1}{2^{n+1}}). 
\]
Applying $\NCC$ yields sequences $(q_{n})_{n\in\N}$, $ (r_{n})_{n\in \N}$ in $[-N-1,N+1]$
such that for all $n^{0}$, there are $x\in C_{0}, y\in C_{1}$ such that
\be\label{kringel}\textstyle
 [x](n+1)=q_{n} \wedge [y](n+1)=r_{n} \wedge x, y\in [-N, N]\wedge |x-y|<  \frac{1}{2^{n+1}}.
\ee
As these sequences are bounded, there are $x_{0},y_{0}\in [-N, N]$ such that $q_{h_{0}(n)}\di x_{0}$ and $r_{h_{1}(n)}\di y_{0}$ for subsequences provided by $h_{0}, h_{1}:\N\di \N$.  
Since $C_{0}$ is closed, we have the following: if $x_{0}\not \in C_{0}$, then there is $r>0$ such that $B(x_{0}, r)\cap C_{0}=\emptyset$.  This however contradicts the convergence $q_{h_{0}(n)}\di x_{0}$ and \eqref{kringel}.  
Hence $x_{0}\in C_{0}$ and $y_{0}\in C_{1}$ in the same way. 
Now note that  $(\forall n^{0})(|r_{n}-q_{n}|<\frac{1}{2^{n}})$ by \eqref{kringel}, which implies that $x_{0}=_{\R}y_{0}$, a contradiction since $C_{0}\cap C_{1}=\emptyset$. 
Finally, since \eqref{sepa} provides a positive `distance' between $C_{0}$ and $C_{1}$ in every interval $[-N,N]$, we can always chose a small enough neighbourhood to exclude points from one of the parts of $C_{2}$, thus guaranteeing continuity for $f$ everywhere on $C_{2}$. 
\end{proof}
Finally, we point out that while Definition \ref{openset} gives rise to interesting results in \cite{dagsamVII}, we could not obtain (all) the expected RM-equivalences \emph{try as we might}.
A better definition of open set, namely Definition \ref{opensset}, that does yield the expected RM-equivalences was introduced in \cite{samph}.  We now study this `better' definition.

\subsection{Bootstrap axioms}\label{jackoff}
We study equivalences from \cite{samph} involving the `bootstrap' axiom $\BOOT$ and show that the use of $\QFAC^{0,1}$ can be replaced with $\NCC$.

\smallskip

First of all, $[\BOOT+\ACA_{0}]\asa \open^{+}$ was proved using $\QFAC^{0,1}$ in \cite{samph}*{\S4.2}.
The `coding principle' $\open^{+}$ connects open sets as in RM, given by countable unions, and open sets given by \emph{uncountable} unions. 
In this section, `open' and `closed' and  refers to the below definition, while `RM-open' refers to the well-known RM-definition from \cite{simpson2}*{II.5} involving countable unions of basic open balls.
\bdefi[Open sets in $\RCAo$ from \cite{samph}]\label{opensset}
An open set $O$ in $\R$ is represented by a functional $\psi:\R\di \R^{2}$.  We write `$x\in O$' for $(\exists y\in \R)(x\in I_{y}^{\psi})$, where $I_{y}^{\psi}$ is the open interval $\big(\psi(y)(1), \psi(y)(1)+|\psi(y)(2)|\big)$ in case the end-points are different, and $\emptyset$ otherwise.  We write $O=\cup_{y\in \R}I_{y}^{\psi}$ to emphasise the connection to uncountable unions.  
A closed set is represented by the complement of an open set.       
\edefi
Intuitively, open sets are given by \emph{uncountable} unions $\cup_{y\in \R}I_{y}^{\psi}$, just like RM-open sets are given by countable such unions.  
Hence, our notion of open set reduces to the notion RM-open set when applying $\ECF$ or when all functions on $\R$ are continuous.  
Moreover, writing down the definition of elementhood in an RM-open set, one observes that such sets are also open (in our sense). 
Finally, closed sets are readily seen to be sequentially closed, and the same for nets instead of sequences.  

\smallskip

The following `coding principle' turns out to have nice properties.  Note that $\open$, a weaker version of $\open^{+}$, was introduced and studied in \cite{dagsamVII}.
We fix an enumeration of all basic open balls $B(q_{n}, r_{n})\subset \R$ for rational $q_{n}, r_{n}$ with $r_{n}>_{\Q}0$.
\bdefi[$\open^{+}$]
For every open set  $Z\subseteq \R$, there is $X\subseteq \N$ such that $(\forall n\in \N)(n\in X\asa B(q_{n}, r_{n})\subseteq Z)$.
\edefi
Note that given the set $X$ from $\open^{+}$, we can write $Z=\cup_{n\in X}B(q_{n}, r_{n})$ as expected.
We now have the following equivalence. 
\begin{thm}
The system $\RCAo+\NCC$ proves $\BOOT\asa[ \open^{+} +\ACA_{0}]$.
\end{thm}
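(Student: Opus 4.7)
The equivalence is proved by showing each implication separately.

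For $\BOOT \to [\open^{+} + \ACA_{0}]$: first derive $\exists^{2}$ (hence $\ACA_{0}^{\omega}$) by applying $\BOOT$ to $Y(f,n) := h(f(0))$ for an arbitrary $h^{1}$; the resulting $X$ satisfies $0 \in X \asa (\exists m)(h(m) = 0)$, yielding $\exists^{2}$. For $\open^{+}$, let $Z \subseteq \R$ be represented by $\psi : \R \to \R^{2}$ and apply $\BOOT$ to the arithmetical $Y'(y,m)$ defined by $Y'(y,m) = 0$ iff $B(q_{m}, r_{m}) \subseteq I_{y}^{\psi}$, where $y^{1}$ is read as a real via Kohlenbach's hat function. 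The resulting $X_{1}$ enumerates those basic rational balls contained in some single $I_{y}^{\psi}$; since each $I_{y}^{\psi}$ is an open interval, every $x \in Z$ lies in such a basic ball, so $Z = \bigcup_{m \in X_{1}} B(q_{m}, r_{m})$ as an RM-open set. The inclusion $B(q_{n}, r_{n}) \subseteq Z$ is then equivalent, via $\WKL$ (provable from $\ACA_{0}$) applied to each closed sub-ball $\overline{B}(q_{n}, r_{n} - 2^{-k})$, to the $\Pi^{0}_{2}$-statement that for every $k$ there is a finite $F \subseteq X_{1}$ with $\overline{B}(q_{n}, r_{n} - 2^{-k}) \subseteq \bigcup_{m \in F} B(q_{m}, r_{m})$; apply $\ACA_{0}$ to form $X$.

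For $[\open^{+} + \ACA_{0}] \to \BOOT$: first reduce to the case $f \in 2^{\N}$ via the usual unary encoding of $\N^{\N}$ inside $2^{\N}$ (legitimate given $\exists^{2}$), so assume $Y^{2}$ has quantifier $(\exists f \in 2^{\N})$. Construct $\psi : \R \to \R^{2}$ that extensionally encodes pairs $(n,f) \in \N \times 2^{\N}$: for each $n$, reserve a disjoint open interval $J_{n} \subset (n - \tfrac{1}{2}, n + \tfrac{1}{2})$ and embed $2^{\N}$ into $J_{n}$ via a middle-third Cantor map $f \mapsto n + c + \sum_{i} 2f(i)/3^{i+2}$ (for a suitable offset $c$), which avoids the binary-expansion ambiguity and is therefore extensional under $=_{\R}$. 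Using $\exists^{2}$, compute from $y \in \R$ the unique such pair $(n,f)$ when it exists, and set $I_{y}^{\psi} := (n - \tfrac{1}{100},\, n + \tfrac{1}{100})$ exactly when $Y(f,n) = 0$, and $\emptyset$ otherwise. Verify $B(n, \tfrac{1}{100}) \subseteq Z \asa (\exists f \in 2^{\N})(Y(f,n) = 0)$: for the forward direction, exhibit the $y$ corresponding to a witness $f$; for the reverse, use the pairwise disjointness of the intervals $(n' - \tfrac{1}{100}, n' + \tfrac{1}{100})$ to pin down $n$ from any $I_{y}^{\psi}$ meeting $B(n, \tfrac{1}{100})$, then extract a witness via the decoding. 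Applying $\open^{+}$ to $Z$ produces $X' \subseteq \N$ with $k \in X' \asa B(q_{k}, r_{k}) \subseteq Z$, and $X = \{n : k_{n} \in X'\}$ is formed by $\RCAo$, where $k_{n}$ indexes $B(n, \tfrac{1}{100})$.

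The main technical obstacle is constructing an extensional, $\exists^{2}$-computable encoding $(n,f) \mapsto y$; the middle-third Cantor encoding handles this because binary expansions fail extensionality under $=_{\R}$. The axiom $\NCC$ plays the role familiar from Theorem~\ref{dich} and Corollary~\ref{waha}: under the excluded-middle dichotomy $(\exists^{2}) \vee \neg(\exists^{2})$, the above higher-order construction handles the case $(\exists^{2})$, while in the case $\neg(\exists^{2})$ all type-two functions become continuous by \cite{kohlenbach2}*{\S3}, $\psi$ reduces to an RM-code, and $\NCC$ supplies the sequences of rational witnesses needed to verify the inclusions $B(q_{n},r_{n}) \subseteq Z$ and to derive the corresponding classical second-order equivalence involving $\WKL$ and compactness.
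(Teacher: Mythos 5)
Your opening step is wrong: $\BOOT$ does not yield $(\exists^{2})$, and the argument you give cannot work. Applying $\BOOT$ to $Y(f,n):=h(f(0))$ produces, for each \emph{fixed} $h^{1}$, a set deciding the single statement $(\exists m)(h(m)=0)$ -- which is classically trivial -- and no third-order functional $\varphi^{2}$ is obtained; there is no uniformity in $h$. Indeed $\RCAo+\BOOT$ cannot prove $(\exists^{2})$: the $\ECF$-interpretation of $(\exists^{2})$ is false while $[\BOOT]_{\ECF}$ is consistent with $\RCA_{0}$, i.e.\ $\BOOT$ holds in structures where all type-two functionals are continuous. This is exactly why the paper (and your own closing paragraph) must run the excluded-middle split $(\exists^{2})\vee\neg(\exists^{2})$, so your first step is both false and in tension with the rest of your argument. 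What is true, and all you need, is $\BOOT\di\ACA_{0}$, obtained e.g.\ by applying $\BOOT$ to $Y(f,n):=h(\langle n,f(0)\rangle)$, which yields the jump of $h$; as written your choice of $Y$ does not even give that. The same problem infects your reversal: in $[\open^{+}+\ACA_{0}]\di\BOOT$ you assume $(\exists^{2})$ throughout (for the unary-coding reduction to $2^{\N}$ and for decoding $y$ into $(n,f)$) without having it, and the case $\neg(\exists^{2})$ is not treated. It is easy but must be said: for continuous $Y$ one has $(\exists f\in\N^{\N})(Y(f,n)=0)\asa(\exists \sigma^{0^{*}})(Y(\sigma*00\dots,n)=0)$, so the $\BOOT$-set is $\Sigma^{0}_{1}$ in $\lambda(\sigma,n).Y(\sigma*00\dots,n)$ and $\ACA_{0}$ suffices. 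Your closing remark that in the case $\neg(\exists^{2})$ ``$\NCC$ supplies the sequences of rational witnesses'' is also off the mark: there, continuity reduces $Z$ to $\cup_{q\in\Q}I^{\psi}_{q}$ and the set $\{n: B(q_{n},r_{n})\subseteq Z\}$ is obtained from $\WKL$ plus arithmetical comprehension relative to $\lambda q.\psi(q)$, both available from $\BOOT\di\ACA_{0}$.

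Modulo these repairs, your main constructions are sound and differ from the paper in instructive ways. In the $(\exists^{2})$ case of the crux direction $\BOOT\di\open^{+}$, your route -- one application of $\BOOT$ (with $\exists^{2}$) to form the set $X_{1}$ of rational balls contained in a \emph{single} $I^{\psi}_{y}$, the observation $Z=\cup_{m\in X_{1}}B(q_{m},r_{m})$, and then the Heine-Borel theorem for countable covers plus arithmetical comprehension relative to $X_{1}$ to extract $\{n:B(q_{n},r_{n})\subseteq Z\}$ -- is correct and in fact bypasses $\NCC$ at the point where the paper invokes it: the paper instead applies $\NCC$ to choose, for each pair in its set $X$, a ball inside a single $I^{\psi}_{y}$, and then uses a second application of $\BOOT$ to make the resulting countable representation decidable before passing to $\open^{+}$. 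Your spelling out of the $\Pi^{0}_{2}$ reformulation via closed sub-balls is exactly the step the paper leaves implicit in ``of course gives rise to $\open^{+}$''. For the reversal, the paper simply cites the literature, whereas your middle-third coding of pairs $(n,f)$ into disjoint unit-spaced intervals, with the disjointness argument pinning down $B(n,\tfrac{1}{100})\subseteq Z\asa(\exists f\in 2^{\N})(Y(f,n)=0)$, is a reasonable explicit construction that correctly avoids the extensionality problems of binary expansions -- but it is a proof only under $(\exists^{2})$, so the missing case split above is a genuine gap, not a cosmetic one.
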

\begin{proof}
The implication $[\ACA_{0}+\open^{+}]\di \BOOT$ over $\RCAo$ is immediate from \cite{samph}*{Theorem~4.4}.
We now prove the `crux' implication $\BOOT\di \open^{+}$ using $\NCC$.  In case $\neg(\exists^{2})$, all functionals on $\R$ or $\N^{\N}$ are continuous by \cite{kohlenbach2}*{\S3}.
Thus, an open set $\cup_{y\in \R}I_{y}^{\psi}$ reduces to the \emph{countable} union $\cup_{q\in \Q}I_{q}^{\psi}$, yielding $\open^{+}$ in this case.     
In case $(\exists^{2})$, let $O$ be an open set given by $\psi:\R\di \R^{2}$ as in Definition~\ref{opensset}.  
Now use $\BOOT$ and $(\exists^{2})$ to define the following set $X\subset \N\times \Q$:
\be\label{flim}\textstyle
(\forall n\in \N, q\in \Q)\big( (n,q)\in X\asa (\exists y\in \R)\big(B(q, \frac{1}{2^{n}})\subset I_{y}^{\psi} \big)  \big).
\ee
Trivially, for the set $X$ from \eqref{flim}, we have for all $n\in \N, q\in \Q$ that:
\begin{align}\textstyle
 (n,q)\in X\di  (\exists m\in \N, r\in \Q)\big(&\textstyle B(q, \frac{1}{2^{n}})\subseteq B(r, \frac{1}{2^{m}}) \notag\\
 &\textstyle\wedge (\exists y\in \R)( B(r, \frac{1}{2^{m}})\subseteq   I_{y}^{\psi}) \big). \label{flimq}
\end{align}
Apply $\NCC$ to the implication in \eqref{flimq} to obtain $\Phi$ such that for all $n\in \N, q\in \Q$:
\begin{align}\textstyle
 (n,q)\in X\di \big(B(q, \frac{1}{2^{n}})&\textstyle\subseteq B(\Phi(n, q)(1), \frac{1}{2^{\Phi(n,q)(2)}})\notag\\
& \textstyle\wedge (\exists y\in \R)( B(\Phi(n,q)(1), \frac{1}{2^{\Phi(n,q)(2)}})\subseteq   I_{y}^{\psi}) \big). \label{flim2}
\end{align}
Now consider the following formula defined in terms of the above $X$ and $\Phi$.
\begin{gather}\notag\textstyle
x\in O\asa (\exists n\in \N, q\in \Q)( (n, q)\in X\wedge x \in B(\Phi(n, q)(1), \frac{1}{2^{\Phi(n,q)(2)}})\\\wedge \textstyle\underline{(\exists y\in \R)( B(\Phi(n,q)(1), \frac{1}{2^{\Phi(n,q)(2)}})\subseteq   I_{y}^{\psi})} \big). \label{keind}
\end{gather}
Note that $\BOOT$ provides a set $Y$ such that $(q, n)\in Y$ if and only $q, n$ satisfy the underlined formula in \eqref{keind}.  Thus, the right-hand side of \eqref{keind} is decidable given $(\exists^{2})$.
The formula \eqref{keind} provides a representation of $O$ as a countable union of open balls, and of course gives rise to $\open^{+}$.  What is left is to prove \eqref{keind}.   

\smallskip

For the reverse implication in \eqref{keind}, $x\in O$ follows by definition from the right-hand side of \eqref{keind}. 
For the forward implication, $x_{0}\in O$ implies $B(x_{0},\frac{1}{2^{n_{0}}} )\subset I_{y_{0}}^{\psi}$ for some $y_{0}\in \R$ and $n_{0}\in \N$ by definition.
For $n_{1}$ large enough, the rational $q_{0}:=[x_{0}](n_{1})$ is inside $B(x_{0},\frac{1}{2^{n_{0}+1}} )$.   
Hence, $(q_{0}, n_{0}+1)\in X$ by \eqref{flim} for $y=y_{0}$.  Applying \eqref{flim2} then yields 
\begin{align}\label{helf}\textstyle
B(q_{0}, \frac{1}{2^{n_{0}+1}})\subseteq ~&\textstyle B(\Phi(n_{0}+1, q_{0})(1), \frac{1}{2^{\Phi(n_{0}+1,q_{0})(2)}})\\\textstyle
&\textstyle\wedge (\exists y\in \R)( B(\Phi(n_{0}+1,q_{0})(1), \frac{1}{2^{\Phi(n_{0}+1,q_{0})(2)}})\subseteq   I_{y}^{\psi}) \big)\notag
\end{align}
By assumption, we also have $x_{0}\in B(q_{0}, \frac{1}{2^{n_{0}+1}})$, and the right-hand side of \eqref{keind} thus follows from \eqref{helf}, and we are done.
\end{proof}
The previous theorem has numerous implications.  For instance, it is proved in \cite{samph}*{\S4} that $[\ACA_{0}+\CBT]\asa [\FIVE+\BOOT]$ over $\RCAo+\QFAC^{0,1}$, 
where $\CBT$ is the Cantor-Bendixson theorem, defined as follows.
\begin{princ}[$\CBT$]
For any closed set $C\subseteq [0,1]$, there exist $P, S\subset C$ such that $C=P\cup S$, $P$ is perfect and closed, and $S^{0\di 1}$ is a sequence of reals. 
\end{princ}
It goes without saying that the above equivalence involving $\CBT$ can be obtained using only $\NCC$ instead.  The same holds for the perfect set theorem and theorems pertaining to separably closed sets from \cite{samph}*{\S4}.


\section{Computability theory and the Axiom of Choice}\label{cac}
\subsection{Introduction}
We study the computational properties of $\NCC$ and related principles.  
To this end, we first introduce the concept of `realiser for $\NCC$'.  
\bdefi[$\NCC(\zeta)$]\label{zzita}
For $Y^{2}$ and $A(n, m)\equiv (\exists f\in 2^{\N})(Y(f, m, n)=0)$:
\[
(\forall n^{0})(\exists m^{0})A(n,m)\di  (\forall n^{0})A(n,\zeta(Y)(n)). 
\]
We refer to $\zeta^{2\di 1}$ satisfying $\NCC(\zeta)$ as a `realiser for $\NCC$' or `$\zeta$-functional'.
\edefi
Note that $\zeta$-functionals as in the previous definition are trivially computable in $\exists^{3}$ via a term of G\"odel's $T$ of very low complexity.  
We are also interested in \emph{weak} realisers for $\NCC$, which are $\zeta_{\w}^{2\di 1}$ such that $(\forall n^{0})(\exists m^{0}\leq \zeta_{\w}(Y)(n))A(n,m)$ in the above specification. 
Thus, $\zeta_{\w}$-functionals only provide an upper bound for the choice function in $\NCC$, while $\zeta$-functionals provide such a function, as is clear from Definition \ref{zzitaw}.  This modification has been discussed in Section \ref{pisec}.

\smallskip

We are also interested in the following related specification for $\vartheta$-functionals, 
which are realisers for $\MCC$ as in Remark \ref{cdki}.
\bdefi[$\MCC(\vartheta)$]
For $Y^{2}$ and $A(n, m)\equiv (\forall g\in 2^{\N})(Y(g, m, n)=0)$:
\[
(\forall n^{0})(\exists m^{0})A(n,m)\di  (\forall n^{0})A(n,\vartheta(Y)(n)). 
\]
\edefi
As noted in Remark \ref{cdki}, the Pincherle phenomenon also pops up when studying the 
Baire category theorem for open sets given by characteristic functions.  However, the associated
proofs are \emph{completely different} from those for the Heine-Borel or Pincherle theorem.  
Similarly, $\vartheta$-functionals give rise to a realiser for the Baire category theorem, while the former
seem fundamentally different from $\zeta$-functionals.  

\smallskip

In Section \ref{cmc}, we show that while $\NCC$ and $\MCC$ are rather weak (from a first-order strength perspective), its \emph{total} realisers are quite strong (from a computational perspective) in that they are exactly $\exists^{3}$.  
Interestingly, this result makes use of a relatively strong fragment of the \emph{axiom of extensionality}; the latter is included in $\RCAo$ as \eqref{EXT} for all finite types. 

\smallskip

In Section \ref{cmc2}, we study \emph{partial} realisers of $\NCC$; one expects those to be weaker than their total counterparts.
We show that such partial realisers can perform the computational task (B) from Section \ref{pisec}; we also \emph{conjecture} that such partial realisers cannot perform the seemingly stronger task (A).  
Since we do not have a proof of this conjecture, we will tackle a weaker problem, namely to find a partial realiser of $\NCC$ that does not compute $\exists^{3}$. 
As noted above, a useful concept is that of a \emph{countably based} functional as in Definition~\ref{frag}.  Indeed, since $\exists^{3}$ is not countably based, a \emph{countably based} partial realiser for $\NCC$ cannot compute $\exists^{3}$.  
In other words, such a partial realiser would be exactly what we want.  
This construct does exist, but is rather elusive: by Theorem \ref{thm.CH} the existence of {countably based} partial realiser for $\NCC$ is equivalent to the Continuum Hypothesis ($\CH$ for short).

\subsection{Total realisers}\label{cmc}
We show that Kleene's $\exists^{3}$ and various \emph{total} realisers for $\NCC$ are one and the same thing, even in weak systems.  

\subsubsection{The power of total realisers for $\NCC$}\label{totsect}
In this section, we show that $\zeta$-functionals compute $\exists^{3}$ and vice versa. 
We also obtain associated equivalences over the base theory $\RCAo$. 
To this end, we first establish the following two lemmas. 
\begin{lemma}
Any $\zeta$-functional computes $\kappa_{0}$ via a term of G\"odel's $T$.
The system $\RCAo$ proves $(\exists \zeta)\NCC(\zeta)\di (\kappa_{0}^{3})$.
\end{lemma}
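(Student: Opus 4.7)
The plan is to construct a term $t$ of Gödel's $T$ (in the single parameter $\zeta$) such that $t(\zeta)$ realises $\kappa_0^3$ whenever $\zeta$ satisfies $\NCC(\zeta)$. Given $Y^2$, I would build an auxiliary $\tilde Y^2$ primitive-recursively from $Y$ so that the antecedent of $\NCC(\zeta)$ is \emph{unconditionally} satisfied, forcing $\zeta(\tilde Y)$ to be a genuine choice function whose values encode the answer to $(\exists f\in 2^{\N})(Y(f)=0)$. The $\RCAo$-statement then follows immediately since all ingredients (primitive recursion, $\zeta$, and the extensionality axiom \eqref{EXT}) are available.

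First I would try the two-branch definition
\[
\tilde Y(f,m,n)=0 \iff \bigl(m=0 \wedge Y(f)=0\bigr) \vee \bigl(m=1 \wedge P(f,n)\bigr),
\]
where $P$ is a trivially satisfiable predicate such as $f = \lambda k.0$, guaranteeing $A(n,1)$ always. Then $A(n,0) \leftrightarrow (\exists f\in 2^{\N})(Y(f)=0)$, the antecedent of $\NCC$ holds via the fallback branch, and $\zeta(\tilde Y)(n)\in\{0,1\}$ is some witness. In the case $\neg(\exists f)(Y(f)=0)$, only $m=1$ qualifies as a witness, so $\zeta(\tilde Y)(n)=1$ is \emph{forced}. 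To detect the opposite case, I would pair $\tilde Y$ with the "stripped" variant $\tilde Y'(f,m,n)=0 \iff m=1\wedge P(f,n)$, for which $\zeta(\tilde Y')(n)=1$ uniformly. Observe that $\tilde Y =_{2} \tilde Y'$ holds precisely when $\neg(\exists f)(Y(f)=0)$; hence the extensionality axiom $\textsf{E}_{2,1}$ applied to $\zeta$ yields $\zeta(\tilde Y)=_{1}\zeta(\tilde Y')$ in the "no" case, and therefore the "no" case is characterised by $\zeta(\tilde Y)(0)=1$. Consequently $\zeta(\tilde Y)(0)=0$ already implies $(\exists f)(Y(f)=0)$.

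The main obstacle is making this test fully diagnostic, i.e.\ ensuring that $\zeta(\tilde Y)(0)=0$ is actually \emph{forced} in the "yes" case. With the naive $P$ above this is not automatic: when both $m=0$ and $m=1$ are admissible witnesses, $\zeta$ is free to return $m=1$, making the "yes" case indistinguishable from the "no" case by a single query. Overcoming this is where the "relatively strong fragment of extensionality" advertised after the lemma should enter. The natural route is to refine the fallback branch so that its witnesses become extensionally invisible to $\zeta$ precisely when $(\exists f)(Y(f)=0)$: replace $P$ by a $Y$-dependent condition for which $\tilde Y$ is extensionally equal to a functional whose $\NCC$-hypothesis \emph{forbids} $m=1$ in the "yes" case, and then apply $\textsf{E}_{2,1}$ a second time. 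A concrete attempt would be to iterate the construction, feeding $\zeta$ a new auxiliary $\tilde{\tilde Y}$ that depends both on $Y$ and on $\zeta(\tilde Y)$, and to read off $\kappa_0(Y)$ from the comparison of the two outputs.

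Once the correct $\tilde Y$ (and possibly $\tilde{\tilde Y}$) is pinned down, the remaining bookkeeping is routine: $\tilde Y$ is a closed $\lambda$-term in $Y$, the whole decision procedure uses $\zeta$ a fixed number of times and primitive recursion on the results, and the correctness proof inside $\RCAo$ reduces to case analysis on $(\exists f)(Y(f)=0)$ together with a single invocation of the extensionality axiom of the appropriate finite type. This will simultaneously establish both halves of the lemma.
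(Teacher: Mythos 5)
There is a genuine gap: the construction that would make the test diagnostic is never produced, and the route you commit to works against you. Your starting premise is that the antecedent of $\NCC(\zeta)$ must be made to hold unconditionally, which is why you add the always-satisfiable fallback branch $m=1$; but, as you yourself observe, once a fallback witness is always admissible nothing in the specification $\NCC(\zeta)$ prevents $\zeta$ from returning it in the ``yes'' case as well, so the single query $\zeta(\tilde Y)(0)$ only gives the one-directional implication you state. The proposed remedies (a $Y$-dependent fallback whose witnesses become ``extensionally invisible'' in the yes-case, or iterating with a $\tilde{\tilde Y}$ built from $\zeta(\tilde Y)$) are left entirely unspecified, and it is unclear they can work at all: as long as every input you feed to $\zeta$ has an always-available witness, the specification alone never forces $\zeta$'s answers to differ between the two cases, no matter how many rounds you iterate. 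So the ``remaining bookkeeping is routine'' conclusion rests on a construction that is missing.

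The missing idea, which is how the paper proceeds, is to exploit the \emph{totality} of $\zeta$ instead of trying to satisfy the antecedent: $\zeta(Y)$ is defined for every input, and extensionality constrains it even when the $\NCC$-hypothesis fails. Concretely, given $Y^{2}$ define, for $k=0,1$, the functionals $Y_{k}(n,m,f)=0$ iff $Y(f)=0\wedge m=k$ (no fallback branch). If $(\exists f\in 2^{\N})(Y(f)=0)$, then the antecedent holds for both $Y_{0}$ and $Y_{1}$ and the \emph{only} admissible witness for $Y_{k}$ is $m=k$, so the specification forces $\zeta(Y_{0})(0)=0\ne 1=\zeta(Y_{1})(0)$. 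If instead $(\forall f\in 2^{\N})(Y(f)>0)$, then $Y_{0}=_{2}Y_{1}$ (both constant $1$), and the extensionality axiom $(\textsf{E})_{2,0}$ yields $\zeta(Y_{0})(0)=_{0}\zeta(Y_{1})(0)$, even though the $\NCC$-antecedent fails. Hence $(\exists f\in 2^{\N})(Y(f)=0)\asa \zeta(Y_{0})(0)\ne_{0}\zeta(Y_{1})(0)$, whose right-hand side is decidable and is given by a term of G\"odel's $T$ in $\zeta$ and $Y$; this defines $\kappa_{0}^{3}$ and the $\RCAo$-implication follows by the same case split. Your two-functional comparison and your use of extensionality in the ``no'' case are the right ingredients, but the decisive step is dropping the fallback so that the yes-case witness is uniquely forced.
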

\begin{proof}
Fix some functional $Y^{2}$ and define the following sequence:
\be\label{klop}
Y_{k}(n, m, f):=
\begin{cases}
0 &Y(f)=0\wedge m=k \\
1 & \textup{ otherwise }
\end{cases}.
\ee
Let $\zeta$ be as in $\NCC(\zeta)$ and consider the following formula.
\be\label{cruxs}
(\exists f\in 2^{\N})(Y(f)=0)\asa \zeta(Y_{0})(0)\ne_{0} \zeta(Y_{1})(0).
\ee
Since the right-hand side of \eqref{cruxs} is decidable, this formula gives rise to $\kappa_{0}^{3}$, as required by the lemma.  
To prove \eqref{cruxs}, note that $(\exists f\in 2^{\N})(Y(f)=0)$ implies $\zeta(Y_{0})(0)=0$ and $\zeta(Y_{1})(0) = 1$ by the definition in \eqref{klop}.
For the remaining implication, $(\forall f\in 2^{\N})(Y(f)>0)$ implies $Y_{0}=_{2}Y_{1}=_{2}1$, i.e.\ the latter functionals are constant $1$.  
The axiom of extensionality $(\textsf{E})_{2,0}$ then yields $\zeta(Y_{0})(0)=_{0}\zeta(Y_{1})(0)$, as required. 
The equivalence \eqref{cruxs} now finishes the proof. 
\end{proof}
We note that the above proof fails if the $\zeta$-functional at hand is not total, while we can prove that there is a \emph{partial} $\zeta$-functional computable in $\kappa^3_{0}$.
We also point out that the axiom of extensionality (for a relatively high type) is used in an essential way in the reverse implication in \eqref{cruxs}.
\begin{lemma}\label{kiolp}
Any $\zeta$-functional computes $\exists^{2}$ via a term of G\"odel's $T$.
The system $\RCAo$ proves $(\exists \zeta)\NCC(\zeta)\di (\exists^{2})$.
\end{lemma}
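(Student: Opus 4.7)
My plan is to imitate the preceding lemma, but with a more direct encoding that bypasses any appeal to extensionality. Given $f^{1}$, I will construct, uniformly in $f$, a type-$2$ functional $Y_{f}$ such that the antecedent of $\NCC$ applied to $Y_{f}$ is equivalent to $(\exists n^{0})(f(n)=0)$, and such that, whenever this antecedent holds, the value $\zeta(Y_{f})(0)$ itself must decode to a genuine witness of that fact.

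Concretely, I define $Y_{f}(g, m, n) := 0$ if $m > 0$ and $f(m-1) = 0$, and $Y_{f}(g, m, n) := 1$ otherwise. This definition is quantifier-free in $f$ and is therefore obtained from $f$ by a very simple term of G\"odel's $T$; the variable $g \in 2^{\N}$ appears only to fit the type signature demanded by $\NCC$ and is not actually inspected. The associated formula $A(n,m) \equiv (\exists g \in 2^{\N})(Y_{f}(g,m,n) = 0)$ then collapses to $m > 0 \wedge f(m-1) = 0$, since the existential quantifier over $g$ is vacuous.

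The crucial step is to establish the equivalence
\[ (\exists n^{0})(f(n) = 0) \asa \bigl[\zeta(Y_{f})(0) > 0 \wedge f(\zeta(Y_{f})(0) - 1) = 0\bigr]. \]
For the forward implication, if $f(k_{0}) = 0$ for some $k_{0}$, then $A(n, k_{0}+1)$ holds for every $n$, so the antecedent of $\NCC(\zeta)$ is satisfied for $Y_{f}$, and $\NCC(\zeta)$ forces $A(0, \zeta(Y_{f})(0))$, which is exactly the right-hand side. The reverse implication is immediate from the definition of $A$. Since the right-hand side is decidable from $\zeta$ and $f$, the equivalence defines $\exists^{2}$ from $\zeta$ via a term of G\"odel's $T$, and the same derivation formalises inside $\RCAo$, yielding $(\exists \zeta)\NCC(\zeta) \di (\exists^{2})$.

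I do not anticipate any serious obstacle. The one subtlety worth flagging is the edge case $\zeta(Y_{f})(0) = 0$, which is safely handled by using ``$\zeta(Y_{f})(0) > 0$'' as a guard before inspecting $f(\zeta(Y_{f})(0)-1)$: this case cannot arise when $f$ actually has a zero, so the equivalence remains sharp. Unlike in the proof of the preceding lemma for $\kappa^{3}_{0}$, no use of extensionality is required, because the forward implication invokes $\zeta$ only on the branch where the antecedent of $\NCC$ genuinely holds, while the reverse implication places no requirement on $\zeta$ at all.
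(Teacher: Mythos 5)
Your proposal is correct and essentially the paper's own argument: make the Cantor-space variable a dummy, code a candidate zero of $f$ into the choice variable $m$, evaluate $\zeta$ at $n=0$, and verify the resulting quantifier-free equivalence (your off-by-one guard is harmless but not needed, since when $f$ has no zero the right-hand side fails automatically). Your remark that no appeal to extensionality is required is also accurate for this construction, even though the paper, after its essentially identical proof, claims the extensionality axiom is used in the final equivalence.
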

\begin{proof}
Fix $Y^{2}$ and $\zeta$ as in $\NCC(\zeta)$.
Using dummy variables and $\zeta(Y)(0)$, we can define $\zeta_0^{2\di 0}$ such that whenever $(\exists m^{0},  \exists f \in 2^{\N})(Y(f, m)=0) $ then $\zeta_0(Y) = m_{0}$ such that $(\exists f\in 2^{\N})(Y(f, m_{0})=0)$.
Now fix $g^{1}$ and define $Y$ as follows
\[
Y(n,f) = 
\begin{cases} 0 & \textup{if $ f(n) = 0$} \\
 1 & \textup{otherwise} \end{cases}
 \]
Then $(\exists n^{0}) (g(n) = 0) \leftrightarrow g(\zeta_0(Y)) = 0$, and we are done. 
\end{proof}
We again point out that the axiom of extensionality (for a relatively high type) is used in an essential way in the final equivalence in the proof.  
To the best of our knowledge, the axiom of extensionality has not been used in higher-order RM beyond $(\textsf{E})_{1, 0}$ in formalising \emph{Grilliot's trick} in $\RCAo$ (see \cite{kooltje, kohlenbach2}).

\smallskip

We now have the following main theorem of this section. 
\begin{thm}\label{maink}
The functional $\exists^{3}$ computes a $\zeta$-functional via a term of G\"odel's $T$, and vice versa.
The system $\RCAo$ proves $(\exists \zeta)\NCC(\zeta)\asa (\exists^{3})$.
\end{thm}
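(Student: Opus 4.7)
My plan is to deduce the theorem immediately from the two preceding lemmas together with the splitting result $(\exists^{3})\leftrightarrow[(\exists^{2})+(\kappa_{0}^{3})]$ from \cite{samsplit, dagsam}, and then verify the (easier) forward direction by a direct construction using $\exists^{3}$.

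\smallskip

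For the reverse implication $(\exists\zeta)\NCC(\zeta)\to(\exists^{3})$, I would simply observe that the preceding two lemmas yield terms $t_{1},t_{2}$ of G\"odel's $T$ such that $t_{1}(\zeta)$ satisfies the specification of $\kappa_{0}^{3}$ and $t_{2}(\zeta)$ satisfies $(\exists^{2})$. Composing these with the G\"odel $T$-term witnessing $[(\exists^{2})+(\kappa_{0}^{3})]\to(\exists^{3})$ (see \cite{samsplit, dagsam}) yields a term of G\"odel's $T$ computing $\exists^{3}$ from $\zeta$. Formalising this in $\RCAo$ is routine since every step is $\RCAo$-provable.

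\smallskip

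For the forward implication, given $\exists^{3}$, I would construct a $\zeta$-functional as follows. For any $Y^{2}$, define $Y'$ by $Y'(f,m,n) := Y(\lambda k. \min(f(k),1), m, n)$, which is a term of G\"odel's $T$ in $Y$. Then the predicate $A(n,m)\equiv (\exists f \in 2^{\N})(Y(f,m,n)=0)$ is equivalent to $(\exists f^{1})(Y'(f,m,n)=0)$, and hence is decided by $\exists^{3}$ applied to $\lambda f. Y'(f,m,n)$. Since $\exists^{3}$ computes $\exists^{2}$ and hence Feferman's $\mu$, I can set
\[
\zeta(Y)(n) := \mu m.\bigl[\exists^{3}(\lambda f. Y'(f,m,n))=0\bigr],
\]
with the convention that $\zeta(Y)(n):=0$ when no such $m$ exists. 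Whenever the antecedent of $\NCC$ holds, such an $m$ exists for every $n$, and we obtain $(\forall n)A(n,\zeta(Y)(n))$, verifying $\NCC(\zeta)$. The definition is given by a term of G\"odel's $T$ in $\exists^{3}$, and the verification goes through in $\RCAo$.

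\smallskip

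No step here is a serious obstacle: the heavy lifting (computing $\kappa_{0}^{3}$ and $\exists^{2}$ from $\zeta$, and the $(\exists^{3})\leftrightarrow[(\exists^{2})+(\kappa_{0}^{3})]$ splitting) is already in place. The only point that deserves care is the bookkeeping around the binary-restriction operation $\lambda k.\min(f(k),1)$ and confirming that the use of extensionality needed in the lemmas above is already available in $\RCAo$ via \eqref{EXT}. This makes the theorem essentially a corollary of the two lemmas combined with the splitting result.
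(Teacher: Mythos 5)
Your proposal is correct and takes essentially the same route as the paper: the reverse direction combines the two preceding lemmas with the splitting $(\exists^{3})\asa[(\exists^{2})+(\kappa_{0}^{3})]$, and the forward direction builds $\zeta$ from $\exists^{3}$ via Feferman's $\mu$ searching for the least witness $m$. The only difference is that you spell out the binary-restriction $\lambda k.\min(f(k),1)$ and the $\mu$-search explicitly, which the paper dismisses as immediate from $\exists^{3}$ computing $\mu^{2}$.
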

\begin{proof}
That $\exists^{3}$ computes a $\zeta$-functional is immediate from the fact that the former computes Feferman's $\mu^{2}$.
The reverse computational direction is similarly immediate in light of the above lemmas.  
For the forward implication, the splitting $(\exists^{3})\asa [(\kappa_{0}^{3})\asa (\exists^{2})]$ can be found in \cite{dagsam}*{\S6}, going back to Kohlenbach.  
Combining the two above lemmas yields the forward implication, while the reverse one is immediate in light of $(\exists^{2})\asa (\mu^{2})$ over $\RCAo$ (see \cite{kooltje}).
\end{proof}
The following corollary is immediate by the theorem, while the second corollary follows \emph{mutatis mutandis}.
\begin{cor}
In the specification $\NCC(\zeta)$, we may assume that $\zeta(Y)(n)$ provides the \emph{least} witness to $m$. 
\end{cor}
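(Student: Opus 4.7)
The plan is to exploit Theorem \ref{maink}, which tells us that any $\zeta$-functional computes Kleene's $\exists^{3}$ via a term of G\"odel's $T$, and that $\RCAo$ proves $(\exists \zeta)\NCC(\zeta)\di (\exists^{3})$. Since $(\exists^{3})$ implies $(\exists^{2})$, we also have Feferman's $\mu^{2}$ available. With these functionals on hand, the least-witness version of $\zeta$ is routine to construct.

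Concretely, given any $\zeta$-functional, use $\exists^{3}$ to define a type-two functional $P$ by
\[
P(Y,n,m) := \begin{cases} 0 & \text{if } (\exists f\in 2^{\N})(Y(f,m,n)=0), \\ 1 & \text{otherwise,}\end{cases}
\]
which is decidable uniformly in $Y,n,m$ using $\exists^{3}$ applied to $\lambda f.Y(f,m,n)$. Then set
\[
\zeta'(Y)(n) := \mu^{2}(\lambda m.\, P(Y,n,m)),
\]
so that $\zeta'(Y)(n)$ is the least $m$ with $(\exists f\in 2^{\N})(Y(f,m,n)=0)$, provided such an $m$ exists. If $(\forall n^{0})(\exists m^{0})A(n,m)$ holds in the sense of $\NCC(\zeta')$, then for each $n$ the search for $\mu^{2}$ terminates, and $\zeta'(Y)(n)$ both satisfies $A(n,\zeta'(Y)(n))$ and is the minimal such witness. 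Thus $\NCC(\zeta')$ holds in the strengthened form.

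For the $\RCAo$ statement, one observes that the entire construction above is carried out via a term of G\"odel's $T$ applied to $\zeta$ (and hence to $\exists^{3}$, which $\zeta$ computes by Theorem \ref{maink}); in particular $\RCAo + (\exists\zeta)\NCC(\zeta)$ proves $(\exists^{3})$, from which $\zeta'$ is definable and the least-witness property is verifiable. I do not foresee any real obstacle: the statement is essentially a packaging of the computational content already granted by Theorem \ref{maink}, the only non-trivial ingredient being the standard fact that an unbounded search over $\N$ requires only $\mu^{2}$, which is computable from $\exists^{3}$ (see \cite{kooltje}). Thus the passage from an arbitrary witness-producing $\zeta$ to the least-witness-producing $\zeta'$ is obtained uniformly and constructively from the data.
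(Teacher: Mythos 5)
Your proof is correct and takes essentially the same route as the paper, which declares the corollary immediate from Theorem \ref{maink}: since any $\zeta$-functional computes $\exists^{3}$ (hence $\mu^{2}$), one decides $(\exists f\in 2^{\N})(Y(f,m,n)=0)$ for each $m$ and searches for the least witness, all via a term of G\"odel's $T$. The only cosmetic point is that $\exists^{3}$ as defined quantifies over Baire space, so one should apply it to $\lambda f.Y(\cdot)$ composed with a retraction of $\N^{\N}$ onto $2^{\N}$ (or invoke $\kappa_{0}^{3}$, computable from $\exists^{3}$) to decide existence over Cantor space.
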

\begin{cor}
The functional $\exists^{3}$ is computable from a $\vartheta$-functional via a term of G\"odel's $T$, and vice versa.
The system $\RCAo$ proves $(\exists \vartheta)\MCC(\vartheta)\asa (\exists^{3})$.
\end{cor}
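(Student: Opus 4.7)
The plan is to mirror the proof of Theorem \ref{maink} almost verbatim, adapting the two key lemmas to the universal-quantifier setting of $\MCC$. For the easy direction, the fact that $\exists^{3}$ decides $(\exists f)(Y(f,m,n) \ne 0)$, hence also its negation, shows that $\exists^{3}$ together with Feferman's $\mu^{2}$ (which it computes) can search for the least $m$ satisfying $(\forall f\in 2^{\N})(Y(f,m,n)=0)$ for each $n$, producing a $\vartheta$-functional by a term of G\"odel's $T$ of very low complexity.

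For the reverse direction, I would first show that any $\vartheta$-functional computes $\exists^{2}$. Given $g^{1}$, define $Y^{g}(n,m,f):=0$ if $g(m)=0$ and $1$ otherwise; then $(\forall f\in 2^{\N})(Y^{g}(f,m,n)=0)$ reduces to $g(m)=0$, so the premise of $\MCC$ holds iff $(\exists m)(g(m)=0)$, and in that case $\vartheta(Y^{g})(0)$ returns such an $m$. Thus $(\exists n)(g(n)=0)\asa g(\vartheta(Y^{g})(0))=0$, which is the desired decision. This exactly parallels Lemma \ref{kiolp}.

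The main work is the analogue for $\kappa_{0}^{3}$. Given $Y^{2}$, define the variant
\[
Y_{k}(n,m,f):=\begin{cases}0 & Y(f)\ne 0 \vee m=k\\ 1 & \textup{otherwise}\end{cases}
\]
for $k\in\{0,1\}$, noting that the test $Y(f)\ne 0$ is a decidable type-$0$ comparison available in $\RCAo$. A short calculation shows $(\forall f)(Y_{k}(f,m,n)=0)\asa m=k \vee (\forall f)(Y(f)\ne 0)$. If $(\exists f\in 2^{\N})(Y(f)=0)$, the premise of $\MCC$ is satisfied by taking $m=k$, and the realiser is \emph{forced} to output $\vartheta(Y_{0})(0)=0$ and $\vartheta(Y_{1})(0)=1$, hence these outputs differ. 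If instead $(\forall f)(Y(f)\ne 0)$, then $Y_{0}$ and $Y_{1}$ are both identically $0$, hence extensionally equal, and the axiom $(\textsf{E})_{2,0}$ forces $\vartheta(Y_{0})(0)=\vartheta(Y_{1})(0)$. Therefore $(\exists f\in 2^{\N})(Y(f)=0)\asa \vartheta(Y_{0})(0)\ne \vartheta(Y_{1})(0)$, yielding $\kappa_{0}^{3}$ via a term of G\"odel's $T$.

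Combining the two computations and invoking the splitting $(\exists^{3})\asa [(\kappa_{0}^{3})\wedge (\exists^{2})]$ cited in the proof of Theorem \ref{maink} (and originally due to Kohlenbach) delivers $(\exists^{3})$ from any $\vartheta$-functional, both as a computational statement and as an implication over $\RCAo$. The subtle point to watch is exactly the same as in Theorem \ref{maink}: the use of a relatively high type instance of the axiom of extensionality in the negative case of the $\kappa_{0}^{3}$-lemma, which is essential for the argument to go through in $\RCAo$.
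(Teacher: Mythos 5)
Your proposal is correct and takes essentially the same route as the paper, which obtains this corollary \emph{mutatis mutandis} from the $\zeta$-functional lemmas: your dualised $Y_{k}$ (with the disjunction $Y(f)\ne 0\vee m=k$, so that the $\MCC$-premise forces $\vartheta(Y_{k})(0)=k$ when $(\exists f\in 2^{\N})(Y(f)=0)$, while $(\textsf{E})_{2,0}$ applied to the two extensionally equal constant-zero functionals handles the other case) is exactly the intended adaptation of the $\kappa_{0}^{3}$-lemma, and the $\exists^{2}$-lemma and the splitting $(\exists^{3})\asa[(\exists^{2})+(\kappa_{0}^{3})]$ are used just as in Theorem \ref{maink}.
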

In light of the above, realisers for $\NCC$ and $\MCC$ are (too) strong and we shall study weaker objects in the next section.
Nonetheless, it is interesting that we have obtained a very different equivalent formulation for $(\exists^{3})$ based on a fragment $\AC$, namely $\NCC$.
It is also interesting that we seem to need a relatively strong fragment of the axiom of extensionality.  Similar to \cite{kooltje}, it is a natural question whether
the above equivalences go through \emph{without} the latter axiom. 

\subsubsection{The power of weak total realisers for $\NCC$}
Similar to the previous section, we study \emph{weak} realisers for $\NCC$ as in Defintion \ref{zzitaw} below. 
As noted in Section~\ref{pisec}, weakening $\NCC$ as in the latter definition means that the proofs in Section \ref{FRAC} do not (seem to) go through. 
Nonetheless, we show that these weak realisers for $\NCC$ still compute $\exists^{3}$, and vice versa.
\bdefi[$\NCC_{\w}(\zeta_{\w})$]\label{zzitaw}
For $Y^{2}$ and $A(n, m)\equiv (\exists f\in 2^{\N})(Y(f, m, n)=0)$:
\[
(\forall n^{0})(\exists m^{0})A(n,m)\di  (\forall n^{0})(\exists m\leq \zeta_{\w}(n))A(n,m). 
\]
\edefi
The following lemma is proved in the same way as for Lemma \ref{kiolp}.
\begin{lemma}\label{frollic}
Any $\zeta_{\w}$-functional computes $\exists^{2}$ via a term of G\"odel's $T$.
The system $\RCAo$ proves $(\exists \zeta_{\w})\NCC_{\w}(\zeta_{\w})\di (\exists^{2})$.
\end{lemma}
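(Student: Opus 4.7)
The proof parallels Lemma~\ref{kiolp}, with bounded search replacing direct witness extraction. Given $g^{1}$ for which we wish to decide $(\exists n)(g(n)=0)$, define $Y^{2}$ by $\lambda$-abstraction available in $\RCAo$ so that $Y(f,m,n)=0$ iff $g(m)=0$, ignoring $f$ and $n$. Then the associated formula $A(n,m)\equiv (\exists f\in 2^{\N})(Y(f,m,n)=0)$ reduces to $g(m)=0$, and hence the antecedent $(\forall n^{0})(\exists m^{0})A(n,m)$ of $\NCC_{\w}(\zeta_{\w})$ is equivalent to $(\exists n^{0})(g(n)=0)$ itself.

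Now set $M := \zeta_{\w}(Y)(0)$, a natural number computed by $\zeta_{\w}$ from $Y$. I claim $(\exists n)(g(n)=0) \leftrightarrow (\exists m\leq M)(g(m)=0)$. The reverse implication is trivial. For the forward implication, assume $(\exists n)(g(n)=0)$; then the antecedent of $\NCC_{\w}(\zeta_{\w})$ holds for our $Y$, so in particular $(\exists m\leq \zeta_{\w}(Y)(0))A(0,m)$, i.e.\ $(\exists m\leq M)(g(m)=0)$. Since bounded quantification over $\N$ is decidable in $\RCAo$, this equivalence yields $(\exists^{2})$, and the entire procedure (construction of $Y$, evaluation of $\zeta_{\w}(Y)(0)$, bounded search) is primitive recursive in $\zeta_{\w}$, hence given by a term of G\"odel's $T$. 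The same argument formalises in $\RCAo$ to give $(\exists \zeta_{\w})\NCC_{\w}(\zeta_{\w})\to (\exists^{2})$.

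I expect the only mildly delicate point, inherited from Lemma~\ref{kiolp}, to be a possible appeal to extensionality of $\zeta_{\w}$ at type two when passing from the defining $\lambda$-term for $Y$ to $\zeta_{\w}(Y)$. In our case this is unproblematic because $Y$ depends only on the type-one parameter $g$ together with primitive-recursive decoding of its numerical arguments, so the construction is straightforwardly formalisable in $\RCAo$ via the recursors and standard pairing, and no appeal to extensionality beyond that already built into $\RCAo$ is required.
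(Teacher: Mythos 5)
Your proof is correct and is essentially the paper's own argument: the paper likewise defines $Y$ from the given $g$ (reusing the functional of Lemma \ref{kiolp}, where the displayed case distinction should read $g(n)=0$) and observes the bounded-search equivalence $(\exists n^{0})(g(n)=0)\leftrightarrow(\exists n\leq \zeta_{\w}(Y))(g(n)=0)$, which is exactly your claim with $M=\zeta_{\w}(Y)(0)$. Your closing remark is also accurate: in contrast to the $\kappa_{0}$-lemmas, no essential appeal to higher-type extensionality is needed here, since only a single functional $Y$ is fed to $\zeta_{\w}$.
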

\begin{proof}
Use the same functional $Y$ as in the proof of Lemma \ref{kiolp}, observing that $(\exists n^{0})(g(n) = 0) \leftrightarrow( \exists n \leq \zeta_{\w}(Y))(g(n) = 0)$. 
\end{proof}
We also have the following (surprising) result showing that even weak realisers for $\NCC$ are in fact strong.
\begin{lemma}
Any $\zeta_{\w}$-functional computes $\kappa_{0}$ via a term of G\"odel's $T$.
The system $\RCAo$ proves $(\exists \zeta_{\w})\NCC_{\w}(\zeta_{\w})\di (\kappa_{0}^{3})$.
\end{lemma}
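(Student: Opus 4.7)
The plan is to adapt the construction from the preceding lemma (showing that a total $\zeta$-functional computes $\kappa_0^3$) to the setting of $\zeta_{\w}$, which produces only an \emph{upper bound} on a choice witness. Fix $Y^2$ and, for each $k \in \N$, define the auxiliary functional
\be\notag
Y_k(f, m, n) := \begin{cases} 0 & \textup{if } Y(f) = 0 \wedge m = k, \\ 1 & \textup{otherwise,} \end{cases}
\ee
and let $A_k(n, m) \equiv (\exists f \in 2^{\N})(Y_k(f, m, n) = 0)$. Two cases arise: if $(\exists f \in 2^{\N})(Y(f) = 0)$, then $A_k(n, m)$ holds iff $m = k$, so the premise of $\NCC_{\w}$ is satisfied for $Y_k$ and the specification forces $\zeta_{\w}(Y_k)(0) \geq k$; otherwise, $Y_k$ is the constant functional $1$ \emph{independently of $k$}, and extensionality of $\zeta_{\w}$ --- an instance of \eqref{EXT} --- yields $\zeta_{\w}(Y_k)(0) = \zeta_{\w}(Y_0)(0)$ for every $k$.

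In the preceding lemma, the exact values returned by $\zeta$ allow the test $\zeta(Y_0)(0) \neq \zeta(Y_1)(0)$ to decide $(\exists f \in 2^{\N})(Y(f) = 0)$; this fails verbatim for $\zeta_{\w}$, since in the positive case fixed indices $k_0, k_1$ may still yield equal bounds. The fix is to invoke $\zeta_{\w}$ twice, feeding the first answer back into the second input: compute $c := \zeta_{\w}(Y_0)(0)$ first, then $c' := \zeta_{\w}(Y_{c+1})(0)$, and output ``$(\exists f \in 2^{\N})(Y(f)=0)$'' if and only if $c' > c$. In the positive case, $c' \geq c+1 > c$ by the lower bound observed above, while in the negative case $Y_{c+1}$ and $Y_0$ are extensionally equal, so $c' = c$ by extensionality. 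This decision procedure is a term of G\"odel's $T$ in $\zeta_{\w}$ and thus yields $\kappa_0^3$; the same construction carried out inside $\RCAo$ establishes the implication $(\exists \zeta_{\w})\NCC_{\w}(\zeta_{\w}) \di (\kappa_0^3)$.

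The main obstacle, as in the preceding two lemmas, is the negative case, which rests squarely on an instance of the axiom of extensionality for $\zeta_{\w}$ at its (relatively high) input type. Without this axiom the extensionally equal inputs $Y_0$ and $Y_{c+1}$ could in principle produce different bounds, so nothing would rule out $c' > c$ holding spuriously. Accepting this use of extensionality, which is built into $\RCAo$, the argument is a clean two-call variant of the ``$\zeta$ computes $\kappa_0$'' proof, and no new ideas beyond the feedback trick are required.
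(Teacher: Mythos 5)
Your proof is correct and follows essentially the same route as the paper: both rely on the feedback trick of calling the realiser once, building a second input whose witnesses are forced strictly above the first answer (the paper via a threshold $n>\zeta_{0}(Y_{0})$, you via the singleton witness value $m=c+1$), and both settle the negative case by the extensionality instance $(\textsf{E})_{2,1}$ applied to the two extensionally constant-$1$ inputs. The only cosmetic difference is that the paper first packages $\zeta_{\w}$ into an auxiliary functional $\zeta_{0}$ as in Lemma \ref{kiolp}, whereas you invoke $\zeta_{\w}$ directly on the parameterised family $Y_{k}$.
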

\begin{proof}
In the same way as in the proof of Lemma \ref{kiolp}, use $\zeta_{\w}$ to define $\zeta_0^{2\di 1}$ such that if $(\exists n^{0}) (\exists f \in 2^{\N}) (Y(n,f)=0)$ then $( \exists n \leq \zeta_0(Y)) (\exists f\in 2^{\N}) (Y(n,f)=0)$.
Fix some $Z^{2}$ and define two functionals $Y_{i}$ for $i=0,1$ as follows:
\[
Y_{0}(n, f):=
\begin{cases}
0 & Z(f)=0 
\\
1 & \textup{ otherwise}
\end{cases}
~
Y_{1}(n, f):=
\begin{cases}
0 & Z(f)=0 \wedge n >\zeta_{0}(Y_{0})\\
1 & \textup{ otherwise}
\end{cases}.
\]
Then $\kappa_{0}^{3}$ is obtained by the previous lemma and the following equivalence:
\be\label{lepanu}
(\exists f\in 2^{\N})(Z(f)=0)\asa \zeta_{0}(Y_{0})\ne_{1}\zeta_{0}(Y_{1}).  
\ee
For the forward direction in \eqref{lepanu}, note that $(\exists f\in 2^{\N})(Z(f)=0)$ implies that $\zeta_{0}(Y_{1}))(0)>_{0}\zeta_{0}(Y_{0})(0)$ by the definition of $Y_{1}$.  
For the reverse direction in \eqref{lepanu}, assuming $(\forall f\in 2^{\N})(Z(f)>0)$ yields $Y_{0}=_{2}Y_{1}=_{2}1$, and the axiom of extensionality $\textsf{(E)}_{2,1}$ yields $\zeta_{0}(Y_{0})=_{1}\zeta_{0}(Y_{1})$, as required.
\end{proof}
We now easily obtain the other main result of this section. 
\begin{thm}\label{maink2}
The functional $\exists^{3}$ computes a $\zeta_{\w}$-functional via a term of G\"odel's $T$, and vice versa. 
The system $\RCAo$ proves $(\exists \zeta_{\w})\NCC(\zeta_{\w})\asa (\exists^{3})$.
\end{thm}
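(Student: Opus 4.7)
The plan is to follow the exact same template as the proof of Theorem \ref{maink}, since the two preceding lemmas for the weak realiser case run in parallel to those used there. Indeed, we already have in hand (i) Lemma \ref{frollic}, giving a term of G\"odel's $T$ that computes $\exists^{2}$ from any $\zeta_{\w}$, and (ii) the preceding lemma, giving a term of G\"odel's $T$ that computes $\kappa_{0}^{3}$ from any $\zeta_{\w}$. Combining these two via the splitting $(\exists^{3}) \asa [(\kappa_{0}^{3}) \wedge (\exists^{2})]$ (from \cite{dagsam}*{\S6}, going back to Kohlenbach) immediately yields a term of G\"odel's $T$ that computes $\exists^{3}$ from any $\zeta_{\w}$, and proves the forward implication $(\exists \zeta_{\w})\NCC_{\w}(\zeta_{\w}) \di (\exists^{3})$ in $\RCAo$.

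For the reverse implication, given $\exists^{3}$ we also have $\mu^{2}$ via the equivalence $(\exists^{2}) \asa (\mu^{2})$ over $\RCAo$, which in particular lets us decide, for each fixed $n^{0}$ and $m^{0}$, whether $(\exists f \in 2^{\N})(Y(f,m,n)=0)$ holds, by applying $\exists^{3}$ to the type-two functional $f \mapsto Y(f,m,n)$. Using $\mu^{2}$ we can then search for the least such $m$, and defining $\zeta_{\w}(Y)(n)$ to be (for instance) this least witness produces a total functional satisfying even the \emph{stronger} specification $\NCC(\zeta)$, whence a fortiori $\NCC_{\w}(\zeta_{\w})$. The whole construction is given by a term of G\"odel's $T$ in $\exists^{3}$.

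There is no genuine obstacle: the two lemmas do all the work, and the only part requiring a tiny bit of care is checking that the decision `$(\exists f\in 2^{\N})(Y(f,m,n)=0)$' can be uniformly packaged into an input to $\exists^{3}$, which is routine. The heart of the theorem is really the second of the two lemmas above, whose proof (as already presented) relies essentially on the axiom of extensionality $\textsf{(E)}_{2,1}$; this is the same extensionality issue flagged after Theorem \ref{maink}, and it persists here.
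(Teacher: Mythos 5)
Your proposal is correct and takes essentially the same approach the paper intends: Theorem \ref{maink2} is stated there without a written-out proof precisely because it follows from Lemma \ref{frollic}, the preceding lemma on $\kappa_{0}^{3}$, and the splitting $(\exists^{3})\asa[(\kappa_{0}^{3})+(\exists^{2})]$, exactly as in the proof of Theorem \ref{maink}. Your reverse direction (using $\exists^{3}$ to decide $(\exists f\in 2^{\N})(Y(f,m,n)=0)$ and $\mu^{2}$ to extract the least witness, which satisfies even the stronger specification $\NCC(\zeta)$ and hence a fortiori $\NCC_{\w}(\zeta_{\w})$) is the same observation the paper makes for total realisers.
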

What makes the results in this section interesting is that realisers for $\NCC$ grew out of a principle that seemed natural and weak, and these functionals then turned out to be strong. 
This illustrates the power of assuming that realisers are total, and supports our view that the partial $\zeta$-functionals reflect in a more natural way the principle $\NCC$ that is meant to replace countable choice.
Hence, we shall study partial realisers for $\NCC$ in Section \ref{cmc2}.  
\subsection{Partial realisers}\label{cmc2}
In this section, we study partial realisers for $\NCC$ and show that they are weaker and have more interesting computational properties than total realisers for $\NCC$. 

\smallskip

In Section \ref{incel}, we connect these realisers to the computational study of compactness as in items (A) and (B) from Section \ref{pisec}.  
In Section \ref{amehoela}, we show that the existence of \emph{countably based} partial realisers for $\NCC$ is equivalent to the Continuum Hypothesis. 
We further provide a foundational discussion of partial versus total functionals in Section \ref{labbe}. Finally, in Section \ref{optional} we discuss the role of Kleene computability in our endeavour and a possible weaker alternative.

\subsubsection{The power of partial realisers for $\NCC$}\label{incel} 
We introduce the notion of `partial realiser for $\NCC$' and prove some basic properties. 

\smallskip

First of all, the following definition is as expected. 
\bdefi[Partial realisers for $\NCC$]\rm~
\begin{itemize}
\item[(a)] A \emph{partial ${\rm \NCC}$-realiser} is a partial functional $\zeta_{\p}$ taking objects $Y$ of type  $ (\N^2 \times 2^{\N}) \rightarrow \N$ as arguments such that \textbf{if}
\[
(\forall n^{0}) (\exists m^{0}) (\exists f \in 2^{\N})(Y(n,m,f)= 0)
\] 
\textbf{then} $\zeta_\p(Y) = g$ is a choice function satisfying
\[
(\forall n^{0})( \exists f \in 2^{\N })(Y(n,g(n),f))= 0).
\]
\item[(b)] A \emph{weak partial ${\rm \NCC}$-realiser} is a partial functional $\zeta_{\p_0}$ taking objects $Y$ of type $( \N \times 2^{\N}) \rightarrow \N$ as arguments, such that \textbf{if} 
$(\exists m^{0})( \exists f \in 2^{\N})(Y(m,f) = 0)$ \textbf{then} $\zeta_{\p_0}(Y)$ terminates and yields an $m$ such that $(\exists f \in 2^{\N} )(Y(m,f)= 0)$.
\end{itemize}
\edefi
While seemingly different, items (a) and (b) yield the same computational class. 
\begin{lemma}\label{lem5} 
The classes of partial ${\rm \NCC}$-realisers and weak partial ${\rm \NCC}$-realisers are computationally equivalent.
\end{lemma}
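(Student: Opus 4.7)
The plan is to establish the equivalence by giving two short mutual reductions, both implementable by terms of low complexity in G\"odel's $T$ (so in particular computable in Kleene's S1--S9). Neither direction requires any case analysis on $(\exists^{2})$; the reductions work unconditionally because they are pure syntactic reshufflings of the input functional $Y$.

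For the direction from (b) to (a), suppose $\zeta_{\p_{0}}$ is a weak partial realiser and let $Y:(\N^{2}\times 2^{\N})\di \N$ satisfy the hypothesis of item~(a), i.e.\ $(\forall n)(\exists m)(\exists f\in 2^{\N})(Y(n,m,f)=0)$. For each fixed $n$, the functional $Y_{n}:=\lambda m,f.\, Y(n,m,f)$ of type $(\N\times 2^{\N})\di \N$ then satisfies the hypothesis of item~(b). Hence $\zeta_{\p_{0}}(Y_{n})$ terminates and produces a witness $m_{n}$. Define $\zeta_{\p}(Y):=\lambda n.\,\zeta_{\p_{0}}(Y_{n})$; this is a choice function as required, and the passage $n\mapsto Y_{n}$ is a term of G\"odel's $T$ (essentially just $\lambda$-abstraction).

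For the direction from (a) to (b), suppose $\zeta_{\p}$ is a partial realiser and let $Y:(\N\times 2^{\N})\di \N$ satisfy the hypothesis of item~(b). Define $\tilde{Y}:(\N^{2}\times 2^{\N})\di \N$ by $\tilde{Y}(n,m,f):=Y(m,f)$ for all $n$; this is independent of the first argument. The hypothesis of~(a) for $\tilde{Y}$ reads $(\forall n)(\exists m)(\exists f\in 2^{\N})(Y(m,f)=0)$, which is just the hypothesis of~(b) and thus holds. Consequently $\zeta_{\p}(\tilde{Y})=g$ exists and satisfies $(\exists f\in 2^{\N})(\tilde{Y}(n,g(n),f)=0)$ for every $n$, which in particular for $n=0$ yields $(\exists f\in 2^{\N})(Y(g(0),f)=0)$. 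Set $\zeta_{\p_{0}}(Y):=g(0)$; again, the construction of $\tilde{Y}$ from $Y$ and the projection $g\mapsto g(0)$ are primitive recursive.

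There is essentially no obstacle: the proof is purely combinatorial/type-theoretic. The only subtlety worth pointing out is that both reductions preserve partiality correctly, i.e.\ the reduction does not call the given realiser on inputs outside the domain specified by the relevant precondition (in each case the reshuffled input $Y_{n}$ or $\tilde Y$ manifestly satisfies the antecedent of its respective specification whenever the original $Y$ does). This ensures the equivalence holds at the level of partial realisers, not merely total ones.
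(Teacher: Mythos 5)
Your proposal is correct and follows the paper's own proof exactly: the paper computes $\zeta_{\p_0}(Y)=\zeta_\p(\lambda(n,m,f).Y(m,f))(0)$ in one direction and $\zeta_\p(Y)(n)=\zeta_{\p_0}(\lambda(m,f).Y(n,m,f))$ in the other, which are precisely your two reductions. Your added remark about the reshuffled inputs staying within the realisers' domains is a correct (and in the paper implicit) detail.
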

\begin{proof}
Clearly a partial ${\rm \NCC}$-realiser computes a weak one: to compute $\zeta_{\p_0}(Y)$, one computes $\zeta_\p(\lambda (n,m,f).Y(m,f))(0)$.
Given $\zeta_{\p_0}$ we can compute $\zeta_\p(Y)(n) = \zeta_{\p_0}(\lambda (m,f) . Y(n,m,f) )$.
\end{proof}
In the sequel, we sometimes identify a function $Y$ as above with its set of zeros.
The functional $\nu$ in the following theorem is called a \emph{selector}, for obvious reasons.
\begin{theorem}\label{thm6}
Let $\zeta_{\p_0}$ be a weak partial ${\rm \NCC}$-realiser. Then there is a partial functional $\nu$ taking subsets $X$ of $2^\N$ as arguments and with values in $2^\N$ such that if $X$ is closed and nonempty, then $\nu(X) \in X$.
\end{theorem}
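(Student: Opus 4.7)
By Lemma \ref{lem5}, it suffices to work with a weak partial realiser $\zeta_{\p_0}$. Assume $X\subseteq 2^{\N}$ is presented by a type-$2$ functional $F_X$ with $g\in X\asa F_X(g)=_{0}0$. The plan is to construct $\nu(X)$ as an element of $2^{\N}$ built bit-by-bit by iterated calls to $\zeta_{\p_0}$: having computed $\sigma_n := \langle f(0),\dots,f(n-1)\rangle$, form
\[
Y_n(m,g) := \begin{cases}
0 & \text{if } m<2,\; \overline{g}(n+1)=\sigma_n * \langle m\rangle, \text{ and } F_X(g)=0,\\
1 & \text{otherwise},
\end{cases}
\]
and set $f(n):=\zeta_{\p_0}(Y_n)$, finally outputting $\nu(X):=f$.

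The key verification, to be carried out by induction on $n$, is that each call $\zeta_{\p_0}(Y_n)$ is defined and that the invariant ``$\sigma_{n+1}:=\sigma_n * \langle f(n)\rangle$ is extensible to an element of $X$'' is maintained. For $n=0$, any $g\in X$ (available since $X\neq\emptyset$) makes $Y_0(g(0),g)=0$ with $g(0)\in\{0,1\}$; thus $\zeta_{\p_0}(Y_0)$ is defined, $f(0)\in\{0,1\}$, and the defining property of $\zeta_{\p_0}$ yields $h\in X$ extending $\langle f(0)\rangle$. The inductive step is analogous: the IH furnishes $g^{*}\in X$ extending $\sigma_{n+1}$, so $g^{*}(n+1)\in\{0,1\}$ witnesses the hypothesis for $Y_{n+1}$, and the defining property of $\zeta_{\p_0}$ delivers the new invariant.

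To conclude $f\in X$, one observes that the invariant produces $g_n\in X$ with $\overline{g_n}n=\overline{f}n$ for each $n$, so $g_n\to f$ in the product topology on $2^{\N}$; since $X$ is (topologically) closed, $f\in X$.

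The main obstacle is the \emph{non-uniformity} of $\zeta_{\p_0}$: the value $m$ it returns at stage $n$ is an arbitrary witness, a priori unrelated to witnesses returned at other stages, and naive iterated applications of $\zeta_{\p_0}$ could easily fail to cohere into a single path. The fix above is to package both ``the next bit $m$'' and ``a witness $g\in X$ extending $\sigma_n * \langle m\rangle$'' into the single existential input to $\zeta_{\p_0}$, which guarantees that, regardless of which $m\in\{0,1\}$ the realiser picks, the resulting prefix remains extensible into $X$ and the recursion continues.
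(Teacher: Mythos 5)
Your proposal is correct and follows essentially the same route as the paper: a primitive recursion that builds $f\in 2^{\N}$ bit by bit, using one call to $\zeta_{\p_0}$ per bit to maintain the invariant $X\cap[\overline{f}n]\neq\emptyset$, and then concluding $f\in X$ from closedness. Your explicit packaging of the next bit together with a witness in $X$ into the existential input to $\zeta_{\p_0}$ is exactly the (implicit) content of the paper's one-line argument, so nothing is missing.
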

\begin{proof}
By recursion on $n$, we use $\zeta_{\p_0}$ and primitive recursion to find (compute) a binary function $f$ such that $X \cap [\overline{f}n] \neq \emptyset$ for each $n$. Now note that $f \in X$.
\end{proof}
While seemingly basic, selectors are hard to compute as follows. 
\begin{lemma}\label{lem7} 
There is no selector  $\nu$ computable in any functional of type 2.
\end{lemma}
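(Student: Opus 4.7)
Suppose, for contradiction, that $\nu$ is a selector and there is some type-2 functional $F$ such that $\nu(X) = \{e\}(F, Y_X)$ via Kleene's S1-S9 for some index $e$, where $Y_X \colon 2^{\N} \to \N$ denotes the characteristic function of $X \subseteq 2^{\N}$ (so $Y_X(g) = 0$ iff $g \in X$). The plan is to combine the countably-based nature of type-3 functionals Kleene-computable in a type-2 one (cf.\ the discussion after Definition~\ref{frag}) with a cardinality argument applied to the family of closed singletons $\{f\}$ for $f \in 2^{\N}$. The key technical fact is that the set of type-1 arguments at which the type-2 oracle is queried during the computation of $\{e\}(F, Y)$ is at most countable, and the computation itself is invariant under changes to $Y$ off this set.

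Consider the oracle $Y_{\emptyset} \equiv 1$, representing the empty set; every query to $Y_{\emptyset}$ returns $1$. Let $G \subseteq 2^{\N}$ be the countable set of type-1 arguments at which $Y_{\emptyset}$ is queried during the (possibly divergent) computation $\{e\}(F, Y_{\emptyset})$. This computation either converges to some unique value $v$ or diverges. Pick any $f \in 2^{\N} \setminus G$, and consider $Y_{\{f\}}$ defined by $Y_{\{f\}}(g) = 0$ iff $g = f$, representing the closed nonempty set $\{f\}$. Since every $g \in G$ satisfies $g \neq f$, we have $Y_{\{f\}}(g) = 1 = Y_{\emptyset}(g)$ on $G$. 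By determinism of Kleene S1-S9, the computation $\{e\}(F, Y_{\{f\}})$ proceeds step-for-step identically to $\{e\}(F, Y_{\emptyset})$: the same subcomputations are invoked, the same queries are made with the same answers, and the outputs agree (either both diverge or both converge to the same value). However, the selector property demands $\{e\}(F, Y_{\{f\}}) = \nu(\{f\}) = f$, and as $f$ ranges over the uncountable set $2^{\N} \setminus G$, this would force uncountably many distinct convergent values, contradicting the above dichotomy.

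The main point requiring care is the continuity/countably-based property for Kleene S1-S9 computations with a type-2 oracle, especially when the computation is divergent: one must verify that the set of oracle queries is at most countable and that the computation tree is genuinely invariant under changes to the oracle outside this set, by induction on the build-up of the S1-S9 derivation. Both are standard consequences of the tree structure of S1-S9 computations with countable branching (see e.g.\ \cite{longmann}*{Ch.\ 5}); granted these, the remaining argument is a pure cardinality contradiction arising from the uncountability of $2^{\N}$.
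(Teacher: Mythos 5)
Your argument is correct in substance and rests on the same mechanism as the paper's proof: for a suitably generic singleton $\{f\}$, an S1-S9 computation from a type-2 functional $F$ and the characteristic function of $\{f\}$ cannot be distinguished from the computation with the empty-set oracle, and a pigeonhole/cardinality argument then finishes the job. The difference lies in how the generic $f$ is chosen, and it matters for how much machinery you need. You take $f$ outside the countable set $G$ of oracle queries of the \emph{possibly divergent} run on $Y_{\emptyset}$, so you must make sense of the query set of a divergent Kleene computation and prove invariance of such a computation under changes of the oracle off that set; this is true and provable (essentially a double induction, over convergence derivations for $(F,Y_{\{f\}})$ and for $(F,Y_{\emptyset})$, using the countably branching trace of attempted subcomputations), but it is precisely the technical content you defer to ``standard facts'', and it is heavier than necessary. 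The paper instead picks $f,g$ distinct and \emph{not computable in} $F$: since $\nu$ is assumed to be a selector computable in $F$, the computations of $\nu(\{f\})$ and $\nu(\{g\})$ converge by hypothesis, and a single induction over these well-founded computation trees shows that every query to the set-oracle is at an $F$-computable point (as long as all answers are negative the run mimics the empty-oracle run), hence never at $f$ or $g$, so the two computations coincide and $\nu(\{f\})=\nu(\{g\})$, a contradiction --- no analysis of divergent computations is needed. A minor further point: since $\nu(X)\in 2^{\N}$ is type $1$, the computation is pointwise in an extra number argument, so your $G$ should be the union of the query sets over that argument (still countable). Both routes work; yours emphasizes the countably based character of the computation, while the paper's choice keeps the whole argument inside convergent computations and is correspondingly shorter.
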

\begin{proof}
Assume that the selector $\nu$ is computable in $F$ and let $f,g \in 2^\N$ be distinct and  not computable in $F$. Let $X_f = \{f\}$ and $X_g = \{g\}$. 
When we compute $\nu(X_f)$ and $\nu(X_g)$ using the algorithm for $\nu$ from $F$, we will only use oracle calls for $h \in X$ for $h$ computable in $F$, and will get the same negative answer for both inputs. Thus $\nu(X_f) = \nu(X_g)$, contradicting what $\nu$ should do. 
\end{proof}
The background for this argument is treated in the proof of \cite[Lem.\ 2.14]{dagsamX}.

\begin{corollary}\label{cor8}
There is no partial ${\rm \NCC}$-realiser computable in any type 2 functional.
\end{corollary}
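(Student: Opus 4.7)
The plan is to derive Corollary \ref{cor8} as a direct consequence of the preceding Lemma \ref{lem5}, Theorem \ref{thm6}, and Lemma \ref{lem7}, by chaining their reductions and taking contrapositives. The argument is essentially a contradiction proof: assuming the existence of a partial $\NCC$-realiser computable in a type 2 functional propagates through these reductions to produce a selector computable in a type 2 functional, which is ruled out.

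In more detail, suppose toward a contradiction that $\zeta_{\p}$ is a partial $\NCC$-realiser and $\zeta_{\p}$ is Kleene computable in some type 2 functional $F$. First, by the easy half of Lemma \ref{lem5}, we obtain a weak partial $\NCC$-realiser $\zeta_{\p_{0}}$ defined by
\[
\zeta_{\p_{0}}(Y) := \zeta_{\p}\bigl(\lambda (n,m,f).\, Y(m,f)\bigr)(0),
\]
and this definition is given by an explicit term of G\"odel's $T$ applied to $\zeta_\p$, so $\zeta_{\p_{0}}$ is also computable in $F$. Next, Theorem \ref{thm6} produces a selector $\nu$ from $\zeta_{\p_{0}}$ by primitive recursion: given a nonempty closed $X \subseteq 2^{\N}$ (identified with the type 2 characteristic-like functional $Y$ whose zero set is $X$), one defines $f \in 2^{\N}$ bit by bit, using $\zeta_{\p_{0}}$ at stage $n$ to choose a bit keeping $X \cap [\overline{f}n]$ nonempty, and then outputs $\nu(X) := f$. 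Since this construction is primitive recursive in $\zeta_{\p_{0}}$, the resulting selector $\nu$ is Kleene computable in $F$ as well. But Lemma \ref{lem7} states that no selector is computable in any type 2 functional, a contradiction.

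The only subtlety I anticipate is being explicit that the reductions in Lemma \ref{lem5} and Theorem \ref{thm6} are \emph{relative} (i.e.\ they preserve computability in a given oracle), since their statements are phrased in an absolute way. However, inspection of their proofs shows that each construction uses only G\"odel's system $T$ together with the realiser being transformed, so the argument is uniform in any oracle carried along, and in particular in $F$. With this observation noted, the corollary follows with essentially no further work.
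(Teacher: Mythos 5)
Your proposal is correct and follows exactly the paper's route: the paper proves Corollary \ref{cor8} precisely by chaining Lemma \ref{lem5}, Theorem \ref{thm6} and Lemma \ref{lem7}, just as you do. Your added remark that the reductions relativise uniformly to any oracle $F$ is a sound (and implicit in the paper) observation, not a deviation.
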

\begin{proof}
Follows directly from Lemma \ref{lem5}, Theorem \ref{thm6} and Lemma \ref{lem7}.
\end{proof}
Corollary \ref{cor8} can also be seen  a consequence of the fact that partial $\NCC$-realisers can deal with the computational problem (B) from the introduction.
\begin{theorem}\label{wog}
Any weak partial ${\rm \NCC}$-realiser $\zeta_{\p_0}$ can perform the following task: for $G:2^{\N}\di \N$, compute $k\in \N$ such that \textbf{there exists} a finite sub-covering of size  $k$ 
of the covering $\cup_{f\in 2^{\N}}\big[\overline{f}G(f)\big]$.
\end{theorem}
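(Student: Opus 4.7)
The plan is to reduce the task of exhibiting a witness $k$ to a single application of $\zeta_{\p_{0}}$, by encoding the search for a size-$k$ sub-cover as a type-two functional whose zero-set the weak partial realiser can probe.

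Given $G^{2}$, I would fix a primitive recursive coding $f\mapsto(f_{0},f_{1},\dots)$ of an element of $2^{\N}$ as an infinite sequence of elements of $2^{\N}$ (for instance, $f_{i}(n):=f(\langle i,n\rangle)$), and define $Y^{2}$ by
\[
Y(k,f)=0 \;\Longleftrightarrow\; \bigcup_{i<k}[\overline{f_{i}}G(f_{i})]=2^{\N}.
\]
The condition on the right is decidable uniformly in $G$, $k$, $f$: letting $N:=\max_{i<k}G(f_{i})$, the finite union of basic clopen sets in question is fully determined by initial segments of length $N$, so it suffices to check, for each of the finitely many $\sigma\in 2^{N}$, whether $\sigma$ lies in some $[\overline{f_{i}}G(f_{i})]$. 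Hence $Y$ is computable in $G$ via a term of Gödel's $T$.

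Now by the Heine--Borel theorem for Cantor space $\HBU_{\c}$, which holds in our $\ZFC$ metatheory, there exist $k_{0}\in\N$ and $g_{0},\dots,g_{k_{0}-1}\in 2^{\N}$ with $\bigcup_{i<k_{0}}[\overline{g_{i}}G(g_{i})]=2^{\N}$. Encoding these sequences into a single $\tilde f\in 2^{\N}$ (with arbitrary continuation beyond index $k_{0}-1$) yields $Y(k_{0},\tilde f)=0$, so the antecedent $(\exists m^{0})(\exists f\in 2^{\N})(Y(m,f)=0)$ of the specification of a weak partial $\NCC$-realiser is satisfied. Therefore $\zeta_{\p_{0}}(Y)$ terminates and outputs some $k:=\zeta_{\p_{0}}(Y)$ with $(\exists f\in 2^{\N})(Y(k,f)=0)$; unpacking the definition of $Y$, this $k$ is precisely a number for which a size-$k$ sub-cover of $\cup_{f\in 2^{\N}}[\overline{f}G(f)]$ exists. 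There is no real obstacle beyond choosing the coding and verifying the decidability of the cover check; the latter relies only on the observation that a finite union of basic clopen sets in $2^{\N}$ covers $2^{\N}$ iff it contains every binary string of length equal to the maximum depth involved.
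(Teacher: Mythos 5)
Your proposal is correct and follows essentially the same route as the paper: you define $Y(k,f)=0$ to mean that the ($k$-many) sequences coded by $f$ yield a finite sub-cover, observe that this is computable from $G$ (a term of G\"odel's $T$), and apply $\zeta_{\p_0}$. The extra details you supply---the decidability of the cover check at depth $\max_{i<k}G(f_i)$ and the verification of the antecedent via compactness of $2^{\N}$ in the $\ZFC$ metatheory---are exactly what the paper leaves implicit, and your choice to code an infinite sequence of elements of $2^{\N}$ rather than a finite tuple is an immaterial variation.
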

\begin{proof}
Given $G$, let $Y(k,f) = 0$ if $f = \langle f_1 , \ldots , f_k\rangle$ and the set of neighbourhoods $[\overline{f_i}G(f_i)]$ for $i = 1 , \ldots , k$ form a sub-covering of $\cup_{f\in 2^{\N}}\big[\overline{f}G(f)\big]$. Clearly, $Y$ is uniformly computable in $G$, only requiring explicit elementary constructions.  Then $\zeta_{\p_0}(Y)$ answers the computational task.
\end{proof}
On a related note, consider the following computational task (C), intermediate between (A) and (B) from Section \ref{pisec}.
A \emph{Lebesgue number} for $\cup_{f\in 2^{\N}}[\overline{f}G(f)]$ is $k\in \N$ such that $(\forall f\in 2^{\N})(\exists g\in 2^{\N})(G(g)\leq 2^{k} \wedge f\in [\overline{g}G(g)] )$.
This notion has been studied in RM in e.g.\ \cite{dagsamIII, moregusto}.
\begin{itemize}
\item[(C)] For any $G:2^{\N}\di \N$, compute a Lebesgue number for $\cup_{f\in 2^{\N}}[\overline{f}G(f)]$.
\end{itemize}
Clearly, the proof of Theorem \ref{wog} yields that partial $\NCC$-realisers can perform the task (C).  
It can be shown that (B) and (C) are equivalent, but we do not have a proof of this equivalence for $2^{\N}$ replaced by $[0,1]$.

\smallskip

Finally, we conjecture that partial $\NCC$-realisers cannot perform the computation task (A) from the introduction, i.e.\ compute the sub-covering itself, rather than just a bound on its size. 
We do not know how to establish this conjecture at the moment, and we therefore consider an `easier' problem: to show the existence of 
partial $\NCC$-realisers that do not compute $\exists^{3}$.  As discussed below Definition \ref{frag}, this easier problem can be solved by exhibiting a \emph{countably based} partial $\NCC$-realiser.  
This is the topic of Section \ref{amehoela}, where we encounter $\CH$.

\subsubsection{Partial realisers and the Continuum Hypothesis}\label{amehoela}
In this section, we show that the existence of a \emph{countably based} partial $\NCC$ realiser is equivalent to $\CH$. 

\smallskip

First, let us observe that the computational power of partial $\NCC$-realisers depends on a symbiosis with discontinuity in the form of $\exists^2$.
\begin{lemma} Assuming $\neg (\exists^2)$ there is a computable partial $\NCC$-realiser $\zeta_{\p}$\end{lemma}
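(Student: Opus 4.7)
The plan is to exploit Kohlenbach's theorem (cited earlier from \cite{kohlenbach2}*{\S3}) that under $\neg(\exists^{2})$ every type-two functional is pointwise continuous on $\N^{\N}$, and in particular on $2^{\N}$. Thus for any input $Y$ to the realiser and any fixed $n,m\in\N$, the section $f\mapsto Y(n,m,f)$ is continuous on Cantor space. Hence if $(\exists f\in 2^{\N})(Y(n,m,f)=0)$, then by continuity at some such $f$ there is a finite stump $\sigma\in 2^{<\N}$ with $Y(n,m,\sigma*00\ldots)=0$, where $00\ldots$ denotes the constant-zero sequence; the converse is trivial. In short, $\neg(\exists^{2})$ makes the statement ``$(\exists f\in 2^{\N})(Y(n,m,f)=0)$'' uniformly $\Sigma^{0}_{1}$ in $Y$.

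I would then define $\zeta_{\p}(Y)(n)$ by unbounded search: fix a recursive bijection $\N\to \N\times 2^{<\N}$, and let $\zeta_{\p}(Y)(n)$ be the first coordinate $m$ of the earliest pair $(m,\sigma)$ in this enumeration for which $Y(n,m,\sigma*00\ldots)=_{0}0$. This is plainly a partial Kleene-computable procedure with $Y$ as an oracle, uniform in $n$; in S1-S9 terms it uses primitive recursion together with one application of unbounded $\mu$-search (clause S8).

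To check the specification, under the hypothesis of the partial $\NCC$-realiser, namely $(\forall n)(\exists m)(\exists f\in 2^{\N})(Y(n,m,f)=0)$, the continuity reduction above guarantees that for each $n$ the search terminates; hence $\zeta_{\p}(Y)$ is defined and yields a total $g:\N\to\N$. By construction, each value $g(n)$ comes with an explicit witness $\sigma*00\ldots\in 2^{\N}$ satisfying $Y(n,g(n),\sigma*00\ldots)=0$, so $(\forall n)(\exists f\in 2^{\N})(Y(n,g(n),f)=0)$, which is exactly what a partial $\NCC$-realiser must provide.

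No serious obstacle is expected. The only non-routine ingredient is the appeal to Kohlenbach's continuity theorem, and the semi-decidability it affords is precisely what collapses the type-three problem of choosing witnesses in $2^{\N}$ to an ordinary unbounded search over $\N\times 2^{<\N}$. Notably, no modulus of continuity for $Y$ is invoked: we use only the bare existential fact that finite stumps witness existential quantification over $2^{\N}$ for continuous $Y$, which is what makes the argument go through without any additional higher-order apparatus beyond $Y$ itself.
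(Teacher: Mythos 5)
Your proposal is correct and follows essentially the same route as the paper: under $\neg(\exists^{2})$ the functional $Y$ is continuous, so the existence of a zero in $2^{\N}$ is witnessed by a finite binary string followed by zeros, and the realiser is defined by an unbounded search over pairs $(m,\sigma)$ that terminates exactly when the hypothesis of $\NCC$ holds. The continuity justification and the search construction match the paper's argument, so nothing further is needed.
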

\begin{proof}
Given $Y(n,m,f)$ and $n$, we search for a pair $(m,s)$ where $s$ is a binary sequence, and where $Y(n,m,s*00\dots ) = 0$. If there is an $m$ and an $f$ such that $Y(n,m,f) = 0$, the continuity of $Y$ will ensure that we find $(m,s)$ as above. We then let $\zeta_{\p}(Y)(n) = m$.
\end{proof}
There is noting dramatic about the previous lemma: the class of realisers for $\HBU$ has the same property. 
Hence, if we are interested in the relative computational power of partial $\NCC$-realisers, we may as well assume that $\exists^2$ is given.

\smallskip

Our next result is not within the scope of usual RM, but we include it in order to illustrate the special character of partial $\NCC$-realisers.
\begin{thm}\label{thm.CH} Assuming $\ZFC$, the following are equivalent:
\begin{enumerate}
\item There is a countably based partial $\NCC$-realiser $\zeta_{\p}$.
\item The continuum hypothesis $\CH$.
\end{enumerate}
\end{thm}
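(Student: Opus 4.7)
The plan is to establish $(1)\Leftrightarrow(2)$ by constructing $\zeta_\p$ directly from a well-ordering under $\CH$, and in the converse by extracting a Kuratowski--Sierpi\'nski covering of $(2^{\N})^2$ from the countable-base structure.

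For $(2)\Rightarrow(1)$, I would fix under $\CH$ a well-ordering $\{f_\alpha:\alpha<\omega_1\}$ of $2^{\N}$. For $Y$ in the natural domain, set $\alpha_n(Y):=\min\{\alpha<\omega_1:(\exists m)(Y(n,m,f_\alpha)=0)\}$ and define $\zeta_\p(Y)(n)$ to be the least $m$ with $Y(n,m,f_{\alpha_n(Y)})=0$. This is evidently a partial $\NCC$-realiser. Put $X_Y:=\bigcup_{n\in\N}\{f_\alpha:\alpha\leq\alpha_n(Y)\}$, a countable union of countables since each $\alpha_n(Y)<\omega_1$. If $Y'$ agrees with $Y$ on $\N^2\times X_Y$, then for each $n$ no ordinal below $\alpha_n(Y)$ becomes a new witness (those $f_\alpha$ lie in $X_Y$ and failed for $Y$), while $f_{\alpha_n(Y)}\in X_Y$ remains a witness at the same least $m$; hence $\zeta_\p(Y')(n)=\zeta_\p(Y)(n)$, establishing countable basedness.

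For $(1)\Rightarrow(2)$, assume $\zeta_\p$ is countably based. For each $f\in 2^{\N}$ let $Y_f(n,m,h)=0$ iff $h=f\wedge m=f(n)$; then $Y_f$ is in the domain and the uniqueness of witnesses gives $\zeta_\p(Y_f)=f$. Projecting the countable base from $\N^{\N}$ onto the $h$-coordinate, pick a countable $X_f\subseteq 2^{\N}$ with $\zeta_\p(Y')=f$ whenever $Y'$ agrees with $Y_f$ on $\N^2\times X_f$. For distinct $f,g\in 2^{\N}$ put $Y^{f,g}(n,m,h)=0$ iff $(h=f\wedge m=f(n))\vee(h=g\wedge m=g(n))$; direct inspection shows that $Y^{f,g}$ agrees with $Y_f$ on $\N^2\times X_f$ exactly when $g\notin X_f$, and symmetrically for $g$. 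If simultaneously $g\notin X_f$ and $f\notin X_g$, countable basedness forces $\zeta_\p(Y^{f,g})=f$ and $\zeta_\p(Y^{f,g})=g$, contradicting $f\neq g$. Hence for all $f\neq g$ we have $g\in X_f$ or $f\in X_g$. The sets $A_0:=\{(f,g):g\in X_f\}$ and $A_1:=\{(f,g):f\in X_g\}$ therefore cover $(2^{\N})^2$, with all vertical sections of $A_0$ and all horizontal sections of $A_1$ countable; the classical Kuratowski--Sierpi\'nski theorem converts this into $|2^{\N}|\leq\aleph_1$, i.e.\ $\CH$.

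The main obstacle is the forward direction, and specifically designing test inputs simple enough to analyse yet rich enough to force the pairwise combinatorial dichotomy ``$g\in X_f$ or $f\in X_g$''; the twin-witness construction $Y^{f,g}$ and the single-witness probes $Y_f$ are exactly what is needed. Once that dichotomy is in hand, the reduction to $\CH$ rests on invoking the classical Kuratowski--Sierpi\'nski characterisation of $\aleph_1$ by coverings of the square with countable sections, which is a non-obvious set-theoretic ingredient. The reverse direction is comparatively routine once one commits to well-ordering $2^{\N}$ in length $\omega_1$ and using ``least witness in the well-order'' as the realiser's choice.
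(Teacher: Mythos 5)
Your argument is correct and, at its core, follows the same route as the paper: for $\CH\Rightarrow(1)$ an $\omega_1$-enumeration with least-witness selection (you well-order $2^{\N}$ and minimise per coordinate $n$; the paper enumerates all sequences of pairs $(m_n,f_n)$ in order type $\omega_1$ and minimises over a single index $\alpha$ -- in both cases countable basedness is the observation that only countably many stages are ever inspected), and for $(1)\Rightarrow\CH$ the single-witness probes $Y_f$, whose countable bases project to countable sets $X_f\subseteq 2^{\N}$ (the paper's $A_f$). The genuine divergence is in how the combinatorial core is organised: you obtain the symmetric dichotomy ``$g\in X_f$ or $f\in X_g$'' for $f\neq g$ directly from the twin-witness functions $Y^{f,g}$, a cleaner device than the paper's route, which argues that the countable bases $Z_f,Z_g$ must be incompatible and analyses the point of disagreement; and you then finish by citing the Kuratowski--Sierpi\'nski covering theorem, whereas the paper proves exactly the relevant direction of that theorem inline (its Claim that $2^{\N}=\bigcup_{f\in X}A_f$ for any $X$ of size $\aleph_1$). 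So the two proofs trade an appeal to a classical result against a self-contained cardinality argument, and buy the same conclusion. One pedantic point: your dichotomy covers only off-diagonal pairs, so $A_0\cup A_1$ as written need not contain the diagonal; either enlarge $X_f$ to $X_f\cup\{f\}$ (sections stay countable), or note, as the paper does, that $f\in X_f$ is automatic -- otherwise the base of $Y_f$ is a sub-function of the constant $1$, hence also a base for $Y_g$ for any $g\notin X_f$ with $g\neq f$, forcing $\zeta_{\p}(Y_g)=f\neq g$. This is immaterial to the correctness of your argument.
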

\begin{proof} First assume $\CH$. Define the set $\{(m_{\alpha,n},f_{\alpha,n}) : n \in \N \wedge \alpha < \aleph_1\}$ where $m_{n,\alpha} \in \N$ and $f_{n,\alpha} \in 2{^\N}$, and such that whenever $\{(m_n,f_n)\}_{n \in \N}$ is a sequence from $\N \times 2^{\N}$ there is an $\alpha < \aleph_1$ such that $m_n = m_{n,\alpha}$ and $f_n = f_{n,\alpha}$ for all $n$. We can then define $\zeta_{\p}$ by
$ \zeta_{\p}(Y)(n) = m_{n,\alpha}$ for the least $\alpha$ such that $(\forall n^{0}) (Y(n,m_{n,\alpha},f_{n,\alpha}) = 0)$.  
This $\zeta_{\p}$ will be countably based, since when terminating we only have to evaluate $Y(n,m_{n,\beta},f_{n,\beta})$ for countably many $\beta$ in order to find a suitable $\alpha$. 

\smallskip

Now assume that $\zeta_{\p}$ is a countably based partial $\NCC$-realiser. For each $f \in 2^{\N}$ let $Y_f \leq 1$ be defined by $Y_f(n,m,g) = 0$ if and only if $f =_{1} g$ and $m = f(n)$. Then $\zeta_{\p}(Y_f) = f$. Let $Z_f \subseteq Y_f$ be a countable basis for $\zeta_{\p}(Y_f)$, i.e. for all $Y$ such that $Z_f \subseteq Y$ we have that $\zeta_{\p}(Y) = f$. Let $A_f$ be the set of $g \in 2^\N$ such that $Z_f(n,m,g)$ is defined for some $n$ and $m$. 
Then $A_f$ is countable and satisfies $f \in A_f$. Indeed, otherwise $Z_f$ is a sub-function of the constant 1, and actually a sub-function of all but countably many $Y_g$. This is impossible and $\CH$ follows from:

\smallskip

\noindent {\em Claim} Let $X \subseteq 2^{\N}$ have cardinality $\aleph_1$. Then $2^{\N} = \bigcup_{f \in X}A_f.$

\smallskip

\noindent{\em Proof of Claim} Assume not, and let $g \not \in \bigcup_{f \in X}A_f$. Let $f \in X$. Since $\zeta_{\p}(Y_f) \neq \zeta_{\p}(Y_g)$ we must have that $Z_{f}$ and $Z_{g}$ are incompatible, which again means that there is a triple  $(n,m,h)$ such that both $Z_f(n,m,h)$ and $Z_g(n,m,h)$ are defined, but different. Since $h \in A_f$ and $g \not \in A_f$ by the choice of $g$ we must have that $h \neq g$, so $Y_g(n,m,h) = 1$, and consequently $Y_f(n,m,h) = 0$ (since the values differ), with the further consequence that $h = f$. Since $f \in X$ was arbitrary, this shows that $X \subseteq  A_g$, which  is impossible since $X$ is uncountable, while $A_g$ is countable. So, the assumption leads to a contradiction, and our claim follows.
\end{proof}
There are two observations to be made from this theorem. One is that in the case of $\CH$, $\exists^3$ cannot be computable in all partial $\NCC$-realisers, since $\exists^3$ is not countably based. The argument readily generalises to the case when the cardinality of the continuum is a successor cardinal, but we have no fully general proof. The argument in  case of successor cardinal is outside the scope of this paper. 

\smallskip

We conjecture that it is provable in $\ZFC$ that there is a partial $\NCC$-realiser that does not compute $\exists^3$ relative to any functional of type 2. On the other hand, if $\CH$ fails, there is no partial $\NCC$-realiser that is computable in any of the countably based functionals we have considered, like $\Theta$-functionals from Section~\ref{HCT} and the functional for non-monotone inductive definitions from \cite{dagcie18}, studied in more detail in \cite{dagnonmon}. 
We again conjecture that $\CH$ is not needed, i.e.\ the existence of such a realiser is provable in $\ZFC$.  

\smallskip

Finally, from the point of view of higher-order computability, partial $\NCC$-realisers are of interest because they are natural enough and represent a hitherto unobserved level of complexity in light of Theorem \ref{thm.CH}.  
This is clearly related to the fact that they are \emph{partial}, and in the next section we discuss
the general problem of how concepts of higher order computability extends to cases like this.

\subsubsection{Total versus partial functionals}\label{labbe}
We discuss the foundational role of partial versus total functionals via some interesting examples based on Pincherle's theorem (Example \ref{pinexa}), transfinite recursion (Example \ref{TER}), and representations of open sets (Example \ref{DEKTA}).

\smallskip

Most abstractly, given a statement of the form $(\forall x)( \exists y)\big(\Phi(x) \rightarrow \Psi(x,y)\big)$, say provable in $\ZFC$, there are two main questions of interest in computability theory.
\begin{enumerate}
\item How hard is it to compute a realiser $\zeta$ such that $(\forall x)(\Phi(x) \rightarrow \Psi(x,\zeta(x)))$?
\item What can we compute from such a realiser $\zeta$?
\end{enumerate}
\quad For item (1), the existence of a computable realiser implies that the theorem is constructively true (for some notion of `constructive').  In the non-computable case, the complexity of a realiser indicates to what extent contra-positive arguments or $\AC$ are needed. Of course, we obtain more information from a total realiser than just a partial one. 
For item (2), we get more information about an implication $A \rightarrow B$ if we can compute realisers for $B$ just from partial realisers for $A$. 

\smallskip

As to naming, we have taken the liberty to talk about `realisers', without introducing a specific realisability semantics or a precise definition of what we mean by a realiser. This is deliberate, as we want to use the expression in any situation where we have some functional that transforms information about an assumption to information about a conclusion. The main point of this section is now that: 
\begin{center}
\emph{it generally makes a huge difference whether we require our realisers to be total objects or not.} 
\end{center}
We will consider a couple of examples backing the above claim, but let us first make one point clear: combining Kleene's S1-S9 and partial functionals the way we do is not problematic or strange in the least. Indeed, it is part of the nature of computability theory that one computes partial objects, directly or relative to other objects. In our context, when we discuss computability relative to a partial object, this partial object will only take total objects as arguments, so the scheme S8 of functional composition needs no adjustment.  

\smallskip
 
As a preliminary example, in the case of total $\NCC$-realisers, it is clear from the proofs in Section \ref{totsect} that it does not matter \emph{what} the output is when the input does not satisfy the assumption of $(\forall n^{0})( \exists m^{0})( \exists f \in 2^{\N}) (Y(n,m,f) =0 )$, the computational strength, namely $\exists^{3}$, stems from the assumption \emph{that} there will always be a value.  
The following three examples are more conceptual in nature.
\begin{exa}[Pincherle's theorem]\label{pinexa}\rm
We discuss how the realisers for the original and uniform versions of Pincherle's theorem are related. 
The original version $\PIT_{o}$ was introduced in Section \ref{hijo}, while the `uniform' version $\PIT_{\u}$ is as follows:
\[
(\forall G:C\di \N)(\exists N\in \N)(\forall F:C\di \N)\big[  \LOC(F, G)\di (\forall g \in C)(F(g)\leq N)\big], 
\]
where $\LOC(F, G)$ from Section \ref{hijo} expresses that $F$ is locally bounded with $G$ realising this fact. 
These theorems were studied in detail in \cite{dagsamV}, including a reasonable definition of realiser, inspired by the work of Pincherle (\cite{tepelpinch}), as follows.

\smallskip

For the uniform version $\PIT_{\u}$, we considered \emph{Pincherle realisers} $M_{\u}^{3}$ in \cite{dagsamV} 
such that whenever $G:2^\N \rightarrow \N$ then $M_{\u}(G)$ is an upper bound for all functions $F$ satisfying $\LOC(F, G)$ from Section~\ref{hijo}.
It is shown in \cite{dagsamV} that computing an upper bound in this way amounts to the task (B) from Section \ref{pisec}.  
What is interesting is that Pincherle realisers are naturally total: $M_{\u}(F)$ must be defined for all $F$. 
By contrast, for the original version $\PIT_{o}$, a \emph{weak} Pincherle realiser $M_{o}$ has two variables: the number $M_o(F,G) = m$ is such that if $\LOC(F, G)$
then $F$ is bounded by $m$ on $2^\N$. 
Even though we considered total functionals $M_{o}$ in \cite{dagsamV}, we do not need $M_o(F,G)$ to be defined unless $\LOC(F, G)$ is satisfied, so here it is equally natural to consider a \emph{partial} realiser. 
In fact, the proof of \cite[Cor.~3.8]{dagsamV} mentioned in Example~\ref{TER} can be adjusted to show that any \emph{partial weak} Pincherle realiser $M_{o}$, together with $\exists^2$, computes a total realiser for transfinite recursion.
This proof is however beyond the scope of this paper. 

\smallskip

Moreover, computing a Pincherle realiser from a \emph{partial} $\NCC$-realiser via Theorem~\ref{wog} demonstrates how uniform Pincherle's theorem $\PIT_{\u}$ can be proved from $\NCC$ (assuming $\HBU$), while computing it from a total $\NCC$-realiser just witnesses that the theorem is provable in $\Z_2^\Omega$.  A similar observation holds for $\PIT_{o}$ and $\WKL$.
\end{exa}
The next example deals with transfinite recursion and Pincherle realisers. 
\begin{exa}[Transfinite recursion]\label{TER}\rm
We assume there is a partial functional $\Gamma$ such that if $(X,\prec)$ is a well-ordering of a subset of $\N$ and $F:\N^\N \rightarrow \N^\N$, then $\Gamma(X,\prec ,F)$ is a sequence of functions $f_x$ such that if $x \in X$ then $f_x = F(f_{\prec x})$, where $f_{\prec x}(\langle y , z\rangle) = f_y(z)$ if $y \prec x$ and 0 otherwise. 
This gives rise to three different functionals of increasing power, as follows.
\begin{itemize}
\item If we are satisfied with $\Gamma$ being partial, it is outright computable by using the recursion theorem for S1-S9. 
\item If we want a total extension of $\Gamma$, but it does not matter what the value is when $(X,\prec)$ is \emph{not} a well-ordering, we can use $\exists^2$ and a Pincherle realiser $M_{\u}$ to compute such a total extension by \cite[Cor.\ 3.8]{dagsamV}. 
\item Given a $\Theta$-functional and $\exists^2$, we may expand $\Gamma$ so that it extracts an infinite descending sequence in $(X, \prec)$ when $\Gamma$ does not provide a fixed point to the recursion equation for iterating $F$ along $(X,\prec)$ (see \cite[Cor.\ 3.16]{dagsamII}). 
\end{itemize}
An important aspect of these three results is the difference in what we mean by `computability'. In the first case, we use the full power of Kleene-computability, and the full use of S1-S9 will only make sense assuming principles of transfinite recursion anyhow, so there is not much insight to be gained from this. For the other two cases, we only use a fragment of G\"odel's $ T$, and thereby illustrate the computational power of compactness in various guises.
\end{exa}
Another example is provided by the $\Delta$-functional introduced in \cite{dagsamVII}*{\S7}. 
Note that modulo $\exists^{2}$, (R.3) below is exactly the usual `countable union of open balls' representation of open sets from RM, called (R.4) in \cite{dagsamVII} and introduced in \cite{simpson2}*{II}.
\begin{exa}[Representations of open sets]\label{DEKTA}\rm
The $\Delta$-functional outputs a `high-level' representation (R.3) of an open set $O$ from a `low-level' representation (R.2) of $O$, as defined below the following two clauses.
\begin{itemize}
\item[(R.2)] If $O \subseteq [0,1]$ is open, an (R.2)-representation is a function $Y:[0,1] \rightarrow \R$ such that $x \in O \leftrightarrow Y(x) > 0$ and moreover such that if $Y(x) > 0$ then $(x - Y(x) , x + Y(x)) \cap [0,1] \subseteq O$.
\item[(R.3)] If $O \subseteq [0,1]$ is open, the (R.3)-representation is the continuous function $Y'$ where $Y'(x)$ is the distance from $x$ to $[0,1] \setminus O$, where the distance to the empty set is defied as 1.
\end{itemize}
With $Y$ and $ Y'$ as in (R.2) and (R.3), we have that $\Delta(Y) = Y'$.  The functional $\Delta$ is of low complexity among the genuine type 3 functionals; it is however unknown what happens with the complexity if we extend $\Delta$ to a total object.  Indeed, the point is that if $\Delta$ can be partial, we never (have to) specify what to do if the input does not represent an open set at all.  Hence, when we say that $\Delta$ is computable from a Pincherle realiser $M_{\u}$ (see \cite{dagsamVI}*{Theorem 7.5}), the algorithm works under the assumption that the input is an (R.2)-representation of an open set. In this case, $\Delta$ is also computable from a partial $\NCC$-realiser $\zeta_{\p}$ and $\exists^2$ as well.
\end{exa}

\subsubsection{Alternatives to Kleene computability}\label{optional}
We briefly discuss the possibility of using computational frameworks other than Kleene's S1-S9.  

\smallskip

On one hand, we have seen that the partial functional for transfinite recursion is outright S1-S9 computable. On the other hand, the step from $\ACA_0$ to $\ATR_0$ is a significant step in logical strength. 
The explanation is of course that the assumption 
\begin{center}
\emph{the definition of computability via S1-S9 is sound }
\end{center}
is itself quite strong.  In fact, this soundness goes beyond the strength of transfinite recursion, as it involves the termination of monotone inductions (see \cite{dagcie18}). 
We will not pursue this discussion here, or make any precise mathematical claims related to it, but let us emphasise the following observation.  

\smallskip

On one hand, positive computability results are more interesting when the concept of higher-order computability at hand is (far) simpler than full S1-S9.  A natural such simple framework is finite type theory with constants for the arithmetical operations and the partial $\mu$-operator. Note that in the proof of Theorem~\ref{thm6}, we go slightly beyond this, but generally our positive results are witnessed by terms in G\"odel's $T$ of low complexity.

\smallskip

On the other hand, \emph{non-computability} results are better the stronger the concept of relative computability involved is.  In this case, S1-S9 is of great interest. 
In fact, our non-computability results generally make use of S1-S9, while \emph{infinite time Turing machines} would be too strong (see \cite{dagnonmon}).  
Finally, as explored systematically in \cite{dagsamX}, it should be noted that computability theory based on S1-S9 is a crucial tool in constructing models for fragments of $\Z_2^\Omega$, 
as in e.g. \cites{dagsam, dagsamII, dagsamIII,dagsamV, dagsamVI, dagsamVII, dagsamX}.

\subsection{Turing machines and higher types}
We finish this paper with a section on accommodating higher types in Turing's framework.

\smallskip

Now, Turing's famous `machine' framework (\cite{tur37}) introduces an intuitively convincing concept of `computing with real numbers'.  
Certain higher type objects, like continuous functions on $\R$, can be represented as real numbers, but this `coding' is not without its problems (see \cite{dagsamVI, samrep})
By contrast, Kleene's S1-S9 has the advantage of providing a notion of `computing with objects of finite type', at the cost of the simplicity of Turing's framework, like e.g.\ the lack of a counterpart of Kleene's $T$-predicate or the axiomatic encoding of the recursion theorem in S9.    

\smallskip

It is then a natural question whether we can discuss certain higher-order results in terms of Turing computability.  An example from \cite{samph}*{\S3.2.1} is as follows: 
let `$\leq_{T}$' be the usual Turing reducibility relation and let $J(Y)$ be the set $\{n\in \N: (\exists f\in \N^{\N})(Y(f, n)=0)\}$, i.e.\ the set $X$ claimed to exist by $\BOOT$.  Now, $\BOOT$ follows from the monotone convergence theorem \emph{for nets} indexed by Baire space in $[0,1]$ by \cite{samph}*{Theorem 3.7}.  This implication yields the following:
\[\textstyle
\textup{for any $Y^{2}$, there is a net $x_{d}:D\di [0,1] $ such that $x=\lim_{d}x_{d}$ implies $ J(Y)\leq_{T}x$.}
\]
Note that the net $x_{d}:D\di [0,1]$ can be defined in terms of $Y^{2}$ via a term of G\"odel's $T$.  A similar result for the Baire category theorem can be found in \cite{dagsamVI}*{\S6.2.2}.

\smallskip

We now discuss a similar result based on $[\WKL+\NCC]\di \PIT_{o}$ as in Theorem~\ref{hijot}.  The computational properties of $\WKL$ in Turing's framework are (very) well-studied, 
and the aforementioned implication suggests the possibility of studying Pincherle's theorem in the same way, namely as follows.  

\smallskip

Given $Y$ as in $\NCC$, define $C(Y)$ as the function $g$ therein, i.e.\ $m=C(Y)(n)$ yields $(\exists f\in 2^{\N})(Y(f, n, m)=0)$.  
Then clearly we have $C(Y)\leq_{T} J(Y)$, where we assume the two number variables are coded into one.  Now consider the contraposition of Pincherle's theorem (without realisers): 
\begin{center}
if a functional $F$ is unbounded on $2^{\N}$, there is 
a point $x_{0}\in 2^{\N}$ such that $F$ is unbounded on all its neighbourhoods.    
\end{center}
Similar to the above, $\exists^{3}$ can (S1-S9) compute $x_{0}$ in terms of $F$, but no type two functional can.  
However, we can state the following:
\begin{center}
if a functional $F$ is unbounded on $2^{\N}$, there is 
a point $x_{0}\leq_{T} (C(F_{0}))'$ in $2^{\N}$ such that $F$ is unbounded on all its neighbourhoods.    
\end{center}
Note that $(C(F_{0}))'$ is the Turing jump of $C(F_{0})$, which is well-defined.   The exact definition of $F_{0}$ is of course based on the formula in square brackets in \eqref{worng}, with obvious/minimal coding.
Clearly, we could apply $\QFAC^{0,1}$ to `$F$ is unbounded on $2^{\N}$' and the jump of the resulting sequence would also yield a point like $x_{0}$.  
However, the point thus obtained is not Turing computable from e.g.\ oracles provided by $J$.  

\smallskip

Finally, the same can be established for the contraposition of $\HBC$ \emph{mutatis mutandis} and many similar theorems about open sets as in Definition \ref{openset}.

\begin{ack}\rm
We thank Anil Nerode for his helpful suggestions.
Sam Sanders' research was supported by the \emph{Deutsche Forschungsgemeinschaft} via the DFG grant SA3418/1-1. 
\end{ack}

\begin{bibdiv}
\begin{biblist}
\bib{avi2}{article}{
  author={Avigad, Jeremy},
  author={Feferman, Solomon},
  title={G\"odel's functional \(``Dialectica''\) interpretation},
  conference={ title={Handbook of proof theory}, },
  book={ series={Stud. Logic Found. Math.}, volume={137}, },
  date={1998},
  pages={337--405},
}

\bib{BRS}{article}{
  author={Bridges, Douglas},
  author={Richman, Fred},
  author={Schuster, Peter},
  title={A weak countable choice principle},
  journal={Proc. Amer. Math. Soc.},
  volume={128},
  date={2000},
  number={9},
  pages={2749--2752},
}

\bib{brownphd}{book}{
  author={Brown, Douglas K.},
  title={Functional analysis in weak subsystems of second-order arithmetic},
  year={1987},
  publisher={PhD Thesis, The Pennsylvania State University, ProQuest LLC},
}

\bib{boekskeopendoen}{book}{
  author={Buchholz, Wilfried},
  author={Feferman, Solomon},
  author={Pohlers, Wolfram},
  author={Sieg, Wilfried},
  title={Iterated inductive definitions and subsystems of analysis: recent proof-theoretical studies},
  series={LNM 897},
  publisher={Springer},
  date={1981},
  pages={v+383},
}

\bib{cousin1}{article}{
  author={Cousin, Pierre},
  title={Sur les fonctions de $n$ variables complexes},
  journal={Acta Math.},
  volume={19},
  date={1895},
  pages={1--61},
}

\bib{littlefef}{book}{
  author={Feferman, Solomon},
  title={How a Little Bit goes a Long Way: Predicative Foundations of Analysis},
  year={2013},
  note={Paper number 154 in \cite {guga}, see \url {https://math.stanford.edu/~feferman/papers/pfa(1).pdf}, unpublished notes dating from 1977-1981, with a new introduction},
}

\bib{fried}{article}{
  author={Friedman, Harvey},
  title={Some systems of second order arithmetic and their use},
  conference={ title={Proceedings of the International Congress of Mathematicians (Vancouver, B.\ C., 1974), Vol.\ 1}, },
  book={ },
  date={1975},
  pages={235--242},
}

\bib{fried2}{article}{
  author={Friedman, Harvey},
  title={ Systems of second order arithmetic with restricted induction, I \& II (Abstracts) },
  journal={Journal of Symbolic Logic},
  volume={41},
  date={1976},
  pages={557--559},
}

\bib{supergandy}{article}{
  author={Gandy, Robin},
  title={General recursive functionals of finite type and hierarchies of functions},
  journal={Ann. Fac. Sci. Univ. Clermont-Ferrand No.},
  volume={35},
  date={1967},
  pages={5--24},
}

\bib{moregusto}{article}{
  author={Giusto, Mariagnese},
  author={Marcone, Alberto},
  title={Lebesgue numbers and Atsuji spaces in subsystems of second-order arithmetic},
  journal={Arch. Math. Logic},
  volume={37},
  date={1998},
  number={5-6},
  pages={343--362},
}

\bib{hartleycountable}{article}{
  author={Hartley, John P.},
  title={The countably based functionals},
  journal={J. Symbolic Logic},
  volume={48},
  date={1983},
  number={2},
  pages={458--474},
}

\bib{heerlijk}{article}{
  author={Herrlich, Horst},
  title={{Choice principles in elementary topology and analysis.}},
  journal={{Commentat. Math. Univ. Carol.}},
  volume={38},
  number={3},
  pages={545--552},
  year={1997},
}

\bib{hillebilly2}{book}{
  author={Hilbert, David},
  author={Bernays, Paul},
  title={Grundlagen der Mathematik. II},
  series={Zweite Auflage. Die Grundlehren der mathematischen Wissenschaften, Band 50},
  publisher={Springer},
  date={1970},
}

\bib{howrude}{book}{
  author={Howard, Paul},
  author={Rubin, Jean E.},
  title={Consequences of the axiom of choice},
  series={Mathematical Surveys and Monographs},
  volume={59},
  publisher={American Mathematical Society, Providence, RI},
  date={1998},
  pages={viii+432},
}

\bib{hunterphd}{book}{
  author={Hunter, James},
  title={Higher-order reverse topology},
  note={Thesis (Ph.D.)--The University of Wisconsin - Madison},
  publisher={ProQuest LLC, Ann Arbor, MI},
  date={2008},
  pages={97},
}

\bib{guga}{collection}{
  title={Feferman on foundations},
  series={Outstanding Contributions to Logic},
  volume={13},
  editor={J\"{a}ger, Gerhard},
  editor={Sieg, Wilfried},
  publisher={Springer, Cham},
  date={2017},
  pages={lxxx+551},
}

\bib{kleene2}{article}{
  author={Kleene, Stephen C.},
  title={Recursive functionals and quantifiers of finite types. I},
  journal={Trans. Amer. Math. Soc.},
  volume={91},
  date={1959},
  pages={1--52},
  issn={0002-9947},
  review={\MR {102480}},
  doi={10.2307/1993145},
}

\bib{kohlenbach4}{article}{
  author={Kohlenbach, Ulrich},
  title={Foundational and mathematical uses of higher types},
  conference={ title={Reflections on the foundations of mathematics}, },
  book={ series={Lect. Notes Log.}, volume={15}, publisher={ASL}, },
  date={2002},
  pages={92--116},
}

\bib{kohlenbach2}{article}{
  author={Kohlenbach, Ulrich},
  title={Higher order reverse mathematics},
  conference={ title={Reverse mathematics 2001}, },
  book={ series={Lect. Notes Log.}, volume={21}, publisher={ASL}, },
  date={2005},
  pages={281--295},
}

\bib{kooltje}{article}{
  author={Kohlenbach, Ulrich},
  title={On uniform weak K\"onig's lemma},
  note={Commemorative Symposium Dedicated to Anne S. Troelstra (Noordwijkerhout, 1999)},
  journal={Ann. Pure Appl. Logic},
  volume={114},
  date={2002},
  number={1-3},
  pages={103--116},
}

\bib{longmann}{book}{
  author={Longley, John},
  author={Normann, Dag},
  title={Higher-order Computability},
  year={2015},
  publisher={Springer},
  series={Theory and Applications of Computability},
}

\bib{mullingitover}{book}{
  author={Muldowney, Pat},
  title={A general theory of integration in function spaces, including Wiener and Feynman integration},
  volume={153},
  publisher={Longman Scientific \& Technical},
  date={1987},
  pages={viii+115},
}

\bib{dagcie18}{article}{
  author={Normann, Dag},
  title={Functionals of Type 3 as Realisers of Classical Theorems in Analysis},
  year={2018},
  journal={Proceedings of CiE18, Lecture Notes in Computer Science 10936},
  pages={318--327},
}

\bib{dagnonmon}{article}{
  author={Normann, Dag},
  title={Computability and non-monotone induction},
  year={2020},
  journal={In preparation},
  pages={pp.\ 40},
}

\bib{dagsam}{article}{
  author={Normann, Dag},
  author={Sanders, Sam},
  title={Nonstandard Analysis, Computability Theory, and their connections},
  journal={Journal of Symbolic Logic},
  volume={84},
  number={4},
  pages={1422--1465},
  date={2019},
}

\bib{dagsamII}{article}{
  author={Normann, Dag},
  author={Sanders, Sam},
  title={The strength of compactness in Computability Theory and Nonstandard Analysis},
  journal={Annals of Pure and Applied Logic, Article 102710},
  volume={170},
  number={11},
  date={2019},
}

\bib{dagsamIII}{article}{
  author={Normann, Dag},
  author={Sanders, Sam},
  title={On the mathematical and foundational significance of the uncountable},
  journal={Journal of Mathematical Logic, \url {https://doi.org/10.1142/S0219061319500016}},
  date={2019},
}

\bib{dagsamVI}{article}{
  author={Normann, Dag},
  author={Sanders, Sam},
  title={Representations in measure theory},
  journal={Submitted, arXiv: \url {https://arxiv.org/abs/1902.02756}},
  date={2019},
}

\bib{dagsamV}{article}{
  author={Normann, Dag},
  author={Sanders, Sam},
  title={Pincherle's theorem in Reverse Mathematics and computability theory},
  journal={Annals of Pure and Applied Logic, doi: \url {10.1016/j.apal.2020.102788}},
  date={2020},
}

\bib{dagsamVII}{article}{
  author={Normann, Dag},
  author={Sanders, Sam},
  title={Open sets in Reverse Mathematics and Computability Theory},
  journal={Journal of Logic and Computability},
  volume={30},
  number={8},
  date={2020},
}

\bib{dagsamX}{article}{
   author={Normann, Dag},
   author={Sanders, Sam},
   title={On the uncountability of $\mathbb{R}$},
   journal={Submitted, arxiv: \url{https://arxiv.org/abs/2007.07560}},
   pages={pp.\ 37},
   date={2020},
}

\bib{tepelpinch}{article}{
  author={Pincherle, Salvatore},
  title={Sopra alcuni sviluppi in serie per funzioni analitiche (1882)},
  journal={Opere Scelte, I, Roma},
  date={1954},
  pages={64--91},
}

\bib{rudebin}{book}{
  author={Rubin, Herman},
  author={Rubin, Jean E.},
  title={Equivalents of the axiom of choice},
  note={Studies in Logic and the Foundations of Mathematics},
  publisher={North-Holland Publishing Co},
  date={1970},
  pages={xxiii+134},
}

\bib{rudebin2}{book}{
  author={Rubin, Herman},
  author={Rubin, Jean E.},
  title={Equivalents of the axiom of choice. II},
  series={Studies in Logic and the Foundations of Mathematics},
  volume={116},
  publisher={North-Holland Publishing Co},
  date={1985},
  pages={xxviii+322},
}

\bib{BISHNOCC}{article}{
  author={Richman, Fred},
  title={The fundamental theorem of algebra: a constructive development without choice},
  journal={Pacific J. Math.},
  volume={196},
  date={2000},
  number={1},
  pages={213--230},
}

\bib{BISHNOCC2}{article}{
  author={Richman, Fred},
  title={Constructive mathematics without choice},
  conference={ title={Reuniting the antipodes---constructive and nonstandard views of the continuum}, address={Venice}, date={1999}, },
  book={ series={Synthese Lib.}, volume={306}, publisher={Kluwer Acad. Publ., Dordrecht}, },
  date={2001},
  pages={199--205},
}

\bib{yamayamaharehare}{article}{
  author={Sakamoto, Nobuyuki},
  author={Yamazaki, Takeshi},
  title={Uniform versions of some axioms of second order arithmetic},
  journal={MLQ Math. Log. Q.},
  volume={50},
  date={2004},
  number={6},
  pages={587--593},
}

\bib{samGH}{article}{
  author={Sanders, Sam},
  title={The Gandy-Hyland functional and a computational aspect of Nonstandard Analysis},
  year={2018},
  journal={Computability},
  volume={7},
  pages={7-43},
}

\bib{samrep}{article}{
  author={Sanders, Sam},
  title={Representations and the foundations of mathematics },
  year={2019},
  journal={Submitted, arxiv: \url {https://arxiv.org/abs/1910.07913}},
  pages={pp.\ 21},
}

\bib{samph}{article}{
  author={Sanders, Sam},
  title={Plato and the foundations of mathematics},
  year={2019},
  journal={Submitted, arxiv: \url {https://arxiv.org/abs/1908.05676}},
  pages={pp.\ 40},
}

\bib{samrecount}{article}{
  author={Sanders, Sam},
  title={Lifting countable to uncountable mathematics},
  year={2020},
  journal={To appear in \emph{Information and Computation} Elsevier, arxiv: \url {https://arxiv.org/abs/1908.05677}},
  pages={pp.\ 24},
}

\bib{samFLO2}{article}{
  author={Sanders, Sam},
  title={Lifting recursive counterexamples to higher-order arithmetic },
  year={2020},
  journal={Proceedings of LFCS2020, Lecture Notes in Computer Science 11972, Springer},
  pages={249-267},
}

\bib{samsplit}{article}{
  author={Sanders, Sam},
  title={Splittings and disjunctions in reverse mathematics},
  journal={Notre Dame J. Form. Log.},
  volume={61},
  date={2020},
  number={1},
  pages={51--74},
}

\bib{sahotop}{article}{
  author={Sanders, Sam},
  title={Reverse Mathematics of topology: dimension, paracompactness, and splittings},
  year={2020},
  journal={To appear in: Notre Dame Journal for Formal Logic, arXiv: \url {https://arxiv.org/abs/1808.08785}},
  pages={pp.\ 21},
}

\bib{shoeshoe}{article}{
  author={Schuster, Peter M.},
  title={Elementary choiceless constructive analysis},
  conference={ title={Computer science logic}, address={Fischbachau}, date={2000}, },
  book={ series={Lecture Notes in Comput. Sci.}, volume={1862}, publisher={Springer}, },
  date={2000},
  pages={512--526},
}

\bib{simpson1}{collection}{
  title={Reverse mathematics 2001},
  series={Lecture Notes in Logic},
  volume={21},
  editor={Simpson, Stephen G.},
  publisher={ASL},
  place={La Jolla, CA},
  date={2005},
  pages={x+401},
}

\bib{simpson2}{book}{
  author={Simpson, Stephen G.},
  title={Subsystems of second order arithmetic},
  series={Perspectives in Logic},
  edition={2},
  publisher={CUP},
  date={2009},
  pages={xvi+444},
}

\bib{stillebron}{book}{
  author={Stillwell, J.},
  title={Reverse mathematics, proofs from the inside out},
  pages={xiii + 182},
  year={2018},
  publisher={Princeton Univ.\ Press},
}

\bib{zwette}{book}{
  author={Swartz, Charles},
  title={Introduction to gauge integrals},
  publisher={World Scientific},
  date={2001},
  pages={x+157},
}

\bib{taokejes}{collection}{
  author={Tao, {Terence}},
  title={{Compactness and Compactification}},
  editor={Gowers, Timothy},
  pages={167--169},
  year={2008},
  publisher={The Princeton Companion to Mathematics, Princeton University Press},
}

\bib{troelstra1}{book}{
  author={Troelstra, Anne Sjerp},
  title={Metamathematical investigation of intuitionistic arithmetic and analysis},
  note={Lecture Notes in Mathematics, Vol.\ 344},
  publisher={Springer Berlin},
  date={1973},
  pages={xv+485},
}

\bib{troeleke1}{book}{
  author={Troelstra, Anne Sjerp},
  author={van Dalen, Dirk},
  title={Constructivism in mathematics. Vol. I},
  series={Studies in Logic and the Foundations of Mathematics},
  volume={121},
  publisher={North-Holland},
  date={1988},
  pages={xx+342+XIV},
}

\bib{troeleke2}{book}{
  author={Troelstra, Anne Sjerp},
  author={van Dalen, Dirk},
  title={Constructivism in mathematics. Vol. II},
  series={Studies in Logic and the Foundations of Mathematics},
  volume={123},
  publisher={North-Holland},
  date={1988},
  pages={i-xviii+345-88},
}

\bib{tur37}{article}{
  author={Turing, Alan},
  title={On computable numbers, with an application to the Entscheidungs-problem},
  year={1936},
  journal={Proceedings of the London Mathematical Society},
  volume={42},
  pages={230-265},
}

\end{biblist}
\end{bibdiv}

\bye